\definecolor{hot}{RGB}{65,105,225}
\theoremstyle{plain}
\newtheorem{theorem}{Theorem}[section]
\newtheorem{prop}[theorem]{Proposition}
\newtheorem{lm}[theorem]{Lemma}
\newtheorem{cor}[theorem]{Corollary}
\newtheorem{lemma}[theorem]{Lemma}
\newtheorem{thrm}[theorem]{Theorem}
\theoremstyle{definition}
\newtheorem{defn}[theorem]{Definition}
\theoremstyle{remark}
\newtheorem{rmk}[theorem]{Remark}
\newtheorem*{assumption}{Assumption}
\newtheorem*{ex*}{Example}
\newtheorem{que}[theorem]{Question}
\newtheorem{defprop}[theorem]{Definition-Proposition}
\newcommand\sA{{\mathcal A}}
\newcommand\sO{{\mathcal O}}
\newcommand\sC{{\mathscr C}}
\newcommand\sF{{\mathcal F}}
\newcommand\sE{{\mathcal E}}
\newcommand\sM{{\mathcal M}}
\newcommand\sV{{\mathcal V}}
\newcommand\sL{\mathcal{L}}
\newcommand\sR{\mathcal{R}}
\def\sQ{\mathcal{Q}}
\def\bR{\mathbf{R}}
\newcommand\pp{{\mathbb{P}}}
\newcommand\zz{{\mathbb{Z}}}
\newcommand\cc{{\mathbb{C}}}
\newcommand\nn{{\mathbb{N}}}
\newcommand\hh{{\mathbb{H}}}
\newcommand{\mb}{\mathcal{M}_{\textrm{B}}}
\newcommand{\mdr}{\mathcal{M}_{\textrm{DR}}}
\newcommand{\mdol}{\mathcal{M_{\textrm{Dol}}}}
\newcommand{\Art}{\mathsf{ART}}
\newcommand{\Set}{\mathsf{SET}}
\DeclareMathOperator{\codim}{codim}              % codim
\DeclareMathOperator{\id}{id}                    % id
\DeclareMathOperator{\obj}{Obj}
\DeclareMathOperator{\ad}{ad}
\DeclareMathOperator{\morph}{Morph}
\DeclareMathOperator{\iso}{Iso}
\DeclareMathOperator{\homo}{Hom}
\DeclareMathOperator{\enmo}{\mathcal{E}\hspace{-2pt}\it{nd}}
\DeclareMathOperator{\spec}{Spec}
\DeclareMathOperator{\rank}{Rank}
\def\ra{\rightarrow}
\def\bone{\mathbf{1}}
\def\bC{\mathbb{C}}
\def\cL{{\mathcal L}}
\def\cM{\mathcal{M}}
\def\Def{{\rm {Def}}}
\def\cR{\mathcal{R}}
\def\om{\omega}
\def\End{{\rm {End}}}
\def\lam{\lambda}
\def\lra{\longrightarrow}
\def\bP{\mathbb{P}}
\def\cO{\mathcal{O}}
\def\Pic{{\rm{Pic}}}
\def\cK{\mathcal{K}}
\def\cF{\mathcal{F}}
\def\Tor{{\rm{Tor}}}
\def\cV{\mathcal{V}}
\def\cA{\mathcal A}
\def\ti{\tilde}
\newcommand{\ubul}{{\,\begin{picture}(-1,1)(-1,-3)\circle*{2}\end{picture}\ }}
\newcommand{\cbul}{{\,\begin{picture}(-1,1)(-1,-3)\circle*{2}\end{picture}\ }}
\newcommand{\sAp}{\mathcal{A}^{p,\ubul}_{\rm{Dol}}}
\newcommand{\sAr}{\mathcal{A}^{\ubul}_{\rm{DR}}}
\newcommand{\sAg}{\mathcal{A}^{\ubul}_{\rm{Gysin}}}
\newcommand{\Ap}{{{A}}^{p,\ubul}_{\rm{Dol}}}
\newcommand{\Ad}{{A}^{0,\ubul}_{\rm{Dol}}}
\newcommand{\Ar}{{A}^{\ubul}_{\rm{DR}}}
\newcommand{\Ah}{{A}^{\ubul}_{\rm{Higgs}}}
\newcommand{\Ag}{{A}^{\ubul}_{\rm{Gysin}}}
\title[Cohomology Jump Loci of DGLA]{Cohomology Jump Loci of Differential Graded Lie Algebras}
\begin{document}
\author{Nero Budur}
\email{Nero.Budur@wis.kuleuven.be}
\address{KU Leuven and University of Notre Dame}
\curraddr{KU Leuven, Department of Mathematics,
Celestijnenlaan 200B, B-3001 Leuven, Belgium}

\author{Botong Wang}
\email{bwang3@nd.edu}
\address{University of Notre Dame}
\curraddr{Department of Mathematics,
 255 Hurley Hall, IN 46556, USA}

\date{}
\classification{14D15, 32G08, 14F35}
\keywords{deformation theory, differential graded Lie algebra, cohomology jump locus, local system}
\thanks{The first author was partly sponsored by the Simons Foundation, NSA, and a KU Leuven OT grant. }
\begin{abstract} 
To study infinitesimal deformation problems with cohomology constraints, we introduce and study cohomology jump functors for differential graded Lie algebra (DGLA) pairs. We apply this to local systems, vector bundles, Higgs bundles, and representations of fundamental groups. The results obtained describe the analytic germs of the cohomology jump loci inside the corresponding moduli space, extending previous results of Goldman-Millson, Green-Lazarsfeld, Nadel, Simpson, Dimca-Papadima, and of the second author.
\end{abstract}

\maketitle

\section{Introduction}

\subsection{Motivation and overview.}\label{subsMot}  Consider a representation $\rho:\pi_1(X)\ra GL(n,\bC)$ of the fundamental group of a topological space $X$. How can one describe all the infinitesimal deformations of $\rho$ constrained by the condition  that the degree $i$ cohomology of the corresponding local system $L_\rho$ has dimension $\ge k$? More precisely, fixing $n$, define the cohomology jump locus $\sV^i_k$ as the set of all such representations.  $\sV^i_k$ has a natural scheme structure when $X$ is a finite CW-complex, and we are asking for a description of the formal scheme $\sV^i_{k,{(L)}}$ at the point $L$.

A nice answer to this question was given in certain cases. Define  the resonance variety $\cR^i_k$ as the set consisting of $\omega\in H^1(X,\cc)$ such that the degree $i$ cohomology of the cup-product complex $(H^\ubul(X,\cc),\omega \cup . )$ has dimension $\ge k$. $\cR^i_k$ also has a natural scheme structure. When $X$ is the complement of a complex hyperplane arrangement and $\rho=\bone$ is the trivial rank $n=1$ representation, Esnault, Schechtman, and Viehweg \cite{ESV} showed that there is an isomorphism of reduced formal germs
\begin{equation}\label{eqHA}
(\sV^i_{k})^{red}_{(\bone)} \cong (\cR^i_{k})^{red}_{(0)}.
\end{equation}
This result has been generalized further by Dimca, Papadima, and Suciu \cite{dps} and recently by Dimca and Papadima \cite{dp}. Their more general result identifies $(\sV^i_k)_{(\bone)}^{red}$ for rank $n\ge 1$ representations on a finite CW-complex $X$ with the reduced formal germ at the origin of a space $\cR^i_k$ defined by replacing the cup-product complexes with Aomoto complexes of the differential graded Lie algebra (DGLA) $\sA^\ubul\otimes_\bC \mathfrak{gl}(\bC^r)$, where $\sA^\ubul$ is any commutative differential graded algebra (CDGA) homotopy equivalent with Sullivan's CDGA $\Omega^\ubul(X,\bC)$ of piecewise smooth $\bC$-forms. In particular, (\ref{eqHA}) is recovered since in that case $\Omega^\ubul(X,\bC)$ is formal, that is, it is homotopy equivalent with its cohomology.

In this article we generalize further these results by providing a description of the formal germ $\sV^i_{k,{(\rho)}}$ at any representation $\rho$ of any rank. In other words, we deal with possibly non-reduced formal germs and with possibly non-trivial local systems.

The deformation problem with cohomology constraints can also be posed for different objects.  For addressing all deformation problems with cohomology constraints at once, we provide a unified framework via differential graded Lie algebras (DGLA), in a sense which we describe next. 

By a deformation problem we mean describing the formal germ $\cM_{(\rho)}$ at some object $\rho$ in some moduli space $\cM$. This is equivalent to describing the corresponding functor on Artinian local algebras which we denote also by $\cM_{(\rho)}$. In fact, this functor is usually well-defined even if the moduli space is not. In practice, every deformation problem over a field of characteristic zero is governed by a DGLA $C$ depending on $(\cM,\rho)$. This means that the functor $\cM_{(\rho)}$ is naturally isomorphic to the deformation functor $\Def(C)$ canonically attached to $C$ via solutions of the Maurer-Cartan equation modulo gauge. An answer to the deformation problem is then obtained by replacing $C$ with a homotopy equivalent DGLA $D$ with enough finiteness conditions which make $\Def(C)=\Def(D)$ representable by an honest space, providing a simpler description of $\cM_{(\rho)}$ than the original definition. A particularly nice answer to the deformation problem is achieved when $D$ can be taken to be the cohomology $H^\ubul(C)$ of $C$, that is, when $C$ is formal. For an overview of this subject, see \cite{M}.

By a deformation problem with cohomology constraints we mean that we have formal germs $\sV^i_{k ,(\rho)}\subset \cM_{(\rho)}$ of objects with a cohomology theory constrained by the condition  that the degree $i$ cohomology has dimension $\ge k$. Then we would like to describe $\sV^i_{k ,(\rho)}$. This is equivalent to describing the corresponding functor on Artinian local algebras which we denote also by $\sV^i_{k ,(\rho)}$, and which in fact is usually well-defined even if the formal germs are not. The point of this article is to stress that, in practice, a deformation problem with cohomology constraints over a field of characteristic zero is governed by a pair $(C,M)$ of a DGLA together with a module over it. Given any such pair, we will canonically define {\it cohomology jump functors} $\Def^i_k(C,M)$. When $\sV^i_{k ,(\rho)}\cong\Def^i_k(C,M)$ as subfunctors of $\cM_{(\rho)}\cong\Def(C)$, we obtain an answer to the deformation problem with cohomology constraints by replacing $(C,M)$ with a homotopy equivalent pair $(D,N)$ with enough finiteness conditions which makes $\Def^i_k(C,M)\cong\Def^i_k(D,N)$ representable by an honest space (like $\cR^i_k$ above). A particularly nice answer is achieved when $(D,N)=(H^\ubul(C),H^\ubul(M))$, that is when the pair $(C,M)$ is formal.

In this paper we consider the deformation problem with cohomology constraints for linear representations of fundamental groups, local systems, holomorphic vector bundles, and Higgs bundles.

The idea of using DGLA pairs is already implicit in \cite{M-a}, where a functor $\Def_\chi$ of semi-trivialized deformations is attached to a DGLA map $\chi:C\ra C'$. Such situation arises for a DGLA pair $(C,M)$ by setting $C'=\End^\ubul(M)$ with the induced natural DGLA structure. It is shown in this case in \cite{M-a} that the image of $\Def_\chi\ra \Def(C)$ describes the deformation problem with no-change-in-cohomology constraint. Therefore our $\Def^i_k(C,M)$ can be seen as refinements of $\Def_\chi$ for $\chi:C\ra \End^\ubul(M)$.

A theorem due to  Lurie \cite{Lu} and Pridham \cite{Pr} in the framework of derived algebraic geometry states that, with the appropriate axiomatization, every infinitesimal deformation problem is governed by a DGLA and that a converse holds. A natural question is whether this can be extended to an equivalence between infinitesimal deformation problems with cohomology constraints and DGLA pairs.

This article puts together, simplifies, and extends previous two articles by the second author \cite{w,wb}. The DGLA pairs were introduced in the first version of this article \cite{wb} by the second author to address the reduced structure of the cohomology jump loci. Since \cite{w, wb} will not be published, we will provide complete arguments, even though they may have already appeared in \cite{w, wb}. The main concept introduced in this second version is that of cohomology jump functors of a DGLA pair. Since this refines the deformation functor, and leads to more direct proofs of even stronger results, we feel that this approach is the closest to a hypothetic final answer to the general problem of infinitesimal deformations with cohomology constraints.
 
Let us describe next in more detail the results of this article.

\subsection{Cohomology jump loci of complexes.}\label{subsCpx} Let $M$ be a finitely generated module over a Noetherian ring $R$. Let 
$
G\;\displaystyle{\mathop{ \ra }^d}\; F \ra M\ra 0
$
be a presentation of $M$ by finitely generated free $R$-modules. Then  the ideal $$J_k(M)=I_{rank(F)-k}(d)$$ of minors of size $rank(F)-k$ of a matrix representing $d$ does not depend on the choice of presentation. This is result goes back to J.W. Alexander and H. Fitting. We generalize it to complexes as follows.

Now let $E^\ubul$ be a complex of $R$-modules, bounded above, such that $H^i(E^\ubul)$ is a finitely generated $R$-module for every $i$. By a lemma of Mumford \cite[III.12.3]{h}, there exists a complex $F^\ubul$ of finitely generated free $R$-modules, and a morphism of complexes $g: F^\ubul\to E^\ubul$ which is a quasi-isomorphism. We define the {\it cohomology jump ideals of $E^\ubul$}  to be 
$$
J^i_k(E^\ubul)=I_{rank(F^i)-k+1}(d^{i-1}\oplus d^{i})
$$
where $d^{i-1}: F^{i-1}\to F^i$ and $d^i: F^i\to F^{i+1}$ are the differentials of the complex $F^\ubul$. We show that $J^i_k(E^\ubul)$ does not depend on the choice of $F^\ubul$ (Definition-Proposition \ref{defprop}). This is the content of Section \ref{secCJI}.

We are interested in the subscheme of $\spec(R)$ associated to such $J^i_k(E^\ubul)$. This setup occurs in many situations, for example when $E^\ubul$ can obtained from topology or from DGLA pairs as below.

%?We also define {\it the Alexander jump ideals of $E^\ubul$} to be $J_k(H^i(E^\ubul))$.

%?- We analyze: $J^i_k(E^\ubul)$ versus $J_k(H^i(E^\ubul))$.

\subsection{Cohomology jump loci of DGLA pairs.} Let $C=(C^\ubul, d_C)$ be a DGLA over $\cc$. The deformation functor $\Def(C)$ attached to $C$ is a functor from the category of Artinian local algebras to the category of sets.  Let $M=(M^\ubul,d_M)$ be a differential graded  module over $C$. Then, using \ref{subsCpx}, we define and study in Section \ref{secDGLA} the {\it cohomology jump functors} $\Def^i_k(C,M)$ as subfunctors of $\Def(C)$. Let us state the crucial property next.

In general the DGLA that governs a deformation problem has infinite dimension on each degree. The following result of Deligne-Goldman-Millson-Schlessinger-Stasheff allows one to replace the DGLA with a finite dimensional one within the same homotopy equivalence class, when such a DGLA is available. 

\begin{theorem}[\cite{gm}]\label{gm0}
The cohomology functor $\Def(C)$ only depends on the 1-homotopy type of $C$. More precisely, if a morphism of DGLA $f: C\to D$ is 1-equivalent, then the induced transformation on functors $f_*: \Def(C)\to \Def(D)$ is an isomorphism. 
\end{theorem}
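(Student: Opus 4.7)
The plan is to follow the standard Schlessinger-style obstruction argument in two phases: first reduce to the case of a surjective 1-equivalence, then induct on the length of the Artinian base along small extensions.

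For the reduction, I would factor $f: C \to D$ through a mapping path-space DGLA
\[
P(f) = \bigl\{(c, \gamma) \in C \oplus (D \otimes \Omega^\ubul[0,1]) : \gamma(0) = f(c)\bigr\},
\]
via $\iota: c \mapsto (c, f(c))$ (the constant path) and $\pi: (c,\gamma) \mapsto \gamma(1)$. The map $\iota$ is a quasi-isomorphism with explicit deformation retract onto $C$, and $\pi$ is surjective. Since $f = \pi \circ \iota$ and $\iota$ is a genuine quasi-isomorphism, $\pi$ inherits the 1-equivalence property from $f$. It therefore suffices to treat a surjective 1-equivalence $f: C \twoheadrightarrow D$, whose kernel $K$ then satisfies $H^0(K) = H^1(K) = 0$ by the long exact sequence.

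For the main step, every Artinian local $\cc$-algebra with residue field $\cc$ is built from $\cc$ by a finite tower of small extensions $0 \to J \to A' \to A \to 0$ with $\mathfrak{m}_{A'} \cdot J = 0$. By induction on the length of $A$, it suffices to show that for each such small extension and each $\xi \in \Def(C)(A)$ the induced map of fibers
\[
\Def(C)(A') \times_{\Def(C)(A)} \{\xi\} \longrightarrow \Def(D)(A') \times_{\Def(D)(A)} \{f_*\xi\}
\]
is a bijection. Since $\mathfrak{m}_{A'} \cdot J = 0$, the Maurer--Cartan equation becomes linear on the lift of $\xi$ to $A'$, and standard obstruction calculus, natural in $f$, yields: the obstruction to lifting is a class $o(\xi) \in H^2(C) \otimes J$; when $o(\xi) = 0$, the set of lifts modulo gauge forms a torsor under $H^1(C) \otimes J$; and the stabilizer of a given lift inside the gauge group, modulo the subgroup acting trivially, is computed by $H^0(C) \otimes J$. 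The analogous statements hold for $D$.

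With these identifications in place, the 1-equivalence hypothesis yields bijectivity of the fiber map at each stage: injectivity of $H^2(f)$ forces $o(\xi)$ and $o(f_*\xi)$ to vanish simultaneously, so $\xi$ lifts iff $f_*\xi$ does; bijectivity of $H^1(f)$ identifies the torsors of lifts on both sides; and bijectivity of $H^0(f)$ matches the gauge equivalence relations. An induction on length then completes the proof. I expect the main obstacle to be the bookkeeping in the small-extension step: one really needs the obstruction analysis at the level of the Deligne groupoid (objects together with morphisms) rather than merely on isomorphism classes, so that the torsor and stabilizer identifications can be compared cleanly under the cohomological hypotheses on $H^0$, $H^1$, and $H^2$.
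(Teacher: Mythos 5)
The paper does not actually prove this statement: Theorem \ref{gm0} is quoted from Goldman--Millson \cite{gm} (the result of Deligne--Goldman--Millson--Schlessinger--Stasheff), and no argument for it appears in the text, so the only meaningful comparison is with the classical proof in the cited reference. Measured against that, your proposal is essentially the classical argument: induction over a tower of small extensions $0\to J\to A'\to A\to 0$, comparing obstruction classes in $H^2\otimes J$, sets of lifts governed by $H^1\otimes J$, and stabilizers governed by $H^0\otimes J$, with the three cohomological hypotheses of a $1$-equivalence entering exactly where you say they do. Goldman--Millson carry this out at the level of the Deligne groupoid $\sC(C;A)$ --- full faithfulness uses $H^0$ and injectivity of $H^1$, essential surjectivity uses surjectivity of $H^1$ and injectivity of $H^2$ --- and this is precisely the point you flag as the real content: on isomorphism classes alone, the fiber of $\Def(C)(A')\to\Def(C)(A)$ over a class is \emph{not} an honest $H^1(C)\otimes J$-torsor (the action is transitive when nonempty, but freeness can fail, and a gauge equivalence over $A'$ need not fix the chosen reduction over $A$), so the ``bookkeeping'' you defer to the groupoid level is genuinely the body of the proof rather than a routine verification.

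Two further remarks. First, your reduction to the surjective case has a circularity risk as phrased: asserting that $\iota: C\to P(f)$ ``is a quasi-isomorphism with explicit deformation retract'' does not by itself yield that $\Def(\iota)$ is an isomorphism --- homotopy invariance of $\Def$ is the very theorem being proved. The standard repair stays inside your framework: the projection $p: P(f)\to C$ is a \emph{surjective} quasi-isomorphism with $p\circ\iota=\id_C$, so once the surjective case is established, $\Def(p)$ is an isomorphism and $\Def(\iota)=\Def(p)^{-1}$, whence $\Def(f)=\Def(\pi)\circ\Def(\iota)$ is an isomorphism. Second, note that your phase-two argument, being phrased through naturality of the obstruction class and the cohomology of $C$ and $D$ directly rather than through the kernel $K$, never actually uses surjectivity of $f$; so phase one, once repaired, is optional. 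Neither point is fatal, and both fixes are standard; with them your outline is a correct reconstruction of the proof that the paper's citation points to.
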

The familiar notions of $i$-equivalence and $i$-homotopy extend easily to DGLA pairs, see Section \ref{secDGLA}. We also extend the last theorem to DGLA pairs:
\begin{theorem}\label{independence}
The cohomology jump functor $\Def^i_k(C, M)$ only depends on the $i$-homotopy type of $(C, M)$. More precisely, if a morphism of DGLA pairs $g=(g_1, g_2): (C, M)\to (D, N)$ satisfies that $g_1$ is 1-equivalent and $g_2$ is $i$-equivalent, then the induced transformation on functors $g_*: \Def^i_k(C, M)\to \Def^i_k(D, N)$ is an isomorphism. 
\end{theorem}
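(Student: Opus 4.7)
The plan is to reduce this to a statement about cohomology jump ideals of twisted complexes, and then invoke the invariance results from Section \ref{secCJI}. Unpacking the definition: for an Artinian local $\mathbb{C}$-algebra $A$ with maximal ideal $\mathfrak{m}_A$, an element of $\Def(C)(A)$ is represented by a Maurer--Cartan element $\xi\in C^1\otimes\mathfrak{m}_A$ modulo gauge, and such a $\xi$ twists the differential on $M\otimes A$ to $d_M+\xi\cdot$. Thus $\Def^i_k(C,M)(A)$ should consist of those gauge classes $[\xi]$ for which the cohomology jump ideal $J^i_k$ of the complex $(M\otimes A,\,d_M+\xi\cdot)$ of finitely generated $A$-modules vanishes. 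The analogous description holds for $(D,N)$ with Maurer--Cartan element $g_1(\xi)\in D^1\otimes\mathfrak{m}_A$.

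First I would apply Theorem \ref{gm0}: since $g_1$ is 1-equivalent, the induced map $g_{1*}\colon\Def(C)(A)\to\Def(D)(A)$ is a bijection. Hence it suffices to show that for every Maurer--Cartan $\xi$ in $C^1\otimes\mathfrak{m}_A$, the cohomology jump ideal $J^i_k$ of $(M\otimes A,\,d_M+\xi\cdot)$ equals that of $(N\otimes A,\,d_N+g_1(\xi)\cdot)$. The pair morphism $g$ gives a canonical map of twisted complexes $g_2\otimes\mathrm{id}_A$ intertwining these differentials (this is precisely the compatibility requirement in the definition of a morphism of DGLA pairs).

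The central step is to show that $g_2\otimes\mathrm{id}_A$ is an $i$-equivalence of these twisted complexes. I would run a filtration/spectral sequence argument: filter $M\otimes A$ and $N\otimes A$ by $\mathfrak{m}_A^j$. Since $\xi\in\mathfrak{m}_A\otimes C^1$, the perturbation $\xi\cdot$ strictly increases filtration, so on the associated graded the differential reduces to $d_M$ (respectively $d_N$) tensored with $\mathrm{gr}_\bullet A$. By hypothesis $g_2$ is $i$-equivalent between $M$ and $N$, so the induced map on the associated graded pieces is an $i$-equivalence. A comparison of the resulting convergent spectral sequences (convergence is automatic by Artinianness of $A$) then yields that the map of total twisted complexes is also an $i$-equivalence. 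This is the main technical obstacle, since one must check carefully that $i$-equivalence is preserved under the passage from the associated graded to the filtered complex; a descending induction on the nilpotency length of $\mathfrak{m}_A$ together with the long exact sequences coming from $0\to\mathfrak{m}_A^{j+1}\to\mathfrak{m}_A^j\to\mathrm{gr}^j\to 0$ makes this go through by the five-lemma in degrees up to $i-1$ and a four-term argument at degree $i$.

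Finally I would invoke the invariance result for cohomology jump ideals: the ideal $J^i_k$ of a bounded-above complex of finitely generated $R$-modules with finitely generated cohomology depends only on the complex up to $i$-quasi-isomorphism, a fact that follows from Definition-Proposition \ref{defprop} applied to a common Mumford-type free resolution truncated in degrees $\leq i+1$. Applying this with $R=A$ to the $i$-equivalence produced in the previous step gives
\[
J^i_k\bigl(M\otimes A,\,d_M+\xi\cdot\bigr)=J^i_k\bigl(N\otimes A,\,d_N+g_1(\xi)\cdot\bigr),
\]
which is exactly what is needed to conclude that $g_*\colon\Def^i_k(C,M)(A)\to\Def^i_k(D,N)(A)$ is a bijection, functorially in $A$.
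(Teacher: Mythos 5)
Your first two steps coincide with the paper's proof: the reduction via Theorem \ref{gm0} to comparing membership of $\omega$ and $g_{1*}(\omega)$ in the jump subfunctors, and the $\mathfrak{m}_A$-adic filtration spectral sequence (the one from Lemma \ref{lemFGCoh}, as in Dimca--Papadima) showing that $g_2\otimes\id_A$ is an $i$-equivalence from $(M\otimes_\bC A, d_\omega)$ to $(N\otimes_\bC A, d_{g_1(\omega)})$. The gap is in your last step. The claim that $J^i_k$ of a bounded-above complex with finitely generated cohomology ``depends only on the complex up to $i$-quasi-isomorphism'' is false, and the paper explicitly warns against exactly this in the Remark following Proposition \ref{propQequiv}. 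Concretely, over $R=\bC[[x]]$ take $E^\ubul$ the zero complex and $E'^\ubul$ the complex $R\xrightarrow{\;x\;}R$ in degrees $0,1$ (a free resolution of the non-free module $R/(x)$ placed in degree $1$). The map $E^\ubul\to E'^\ubul$ is a $0$-equivalence, since both complexes have $H^0=0$ and $0\to H^1(E'^\ubul)=R/(x)$ is injective; yet $J^0_1(E^\ubul)=R$ while $J^0_1(E'^\ubul)=(x)$. The underlying reason is that $J^i_k$ is read off from the differentials $d^{i-1},d^i$ of a free resolution, and these are not determined by the $i$-quasi-isomorphism type: your proposed justification via truncating a Mumford resolution in degrees $\le i+1$ does not work, because $J^i_k$ of a truncation differs in general from $J^i_k$ of the full complex (Corollary \ref{idealindep} genuinely needs a full quasi-isomorphism).

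The good news is that your argument is repairable inside your own setup, and the repair is precisely what the paper does: the correct invariance statement is Proposition \ref{propQequiv}, which requires, besides the $i$-equivalence of a map of bounded-above complexes of free $A$-modules with finitely generated cohomology, that its reduction modulo the maximal ideal $\mathfrak{m}_A$ be an $i$-equivalence as well. In your situation this extra hypothesis is automatic: since $\omega\in C^1\otimes\mathfrak{m}_A$, the twist dies upon reduction, so the reduction of $g_2\otimes\id_A$ modulo $\mathfrak{m}_A$ is just $g_2\colon (M,d_M)\to (N,d_N)$, which is an $i$-equivalence by hypothesis. (Note that the counterexample above fails this mod-$\mathfrak{m}$ condition, which is why it does not contradict Proposition \ref{propQequiv}.) Replacing your unconditional invariance claim by an appeal to Proposition \ref{propQequiv}, together with this one-line verification of its second hypothesis, completes the proof and makes it agree with the paper's.
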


A typical application of Theorem \ref{gm0} is when $C$ is formal. Similarly, we  define formality for DGLA pairs and make use of it via Theorem \ref{independence} when we consider concrete deformation problems as below.

In Sections \ref{secQR} and \ref{augment} we address the quadratic cones, resonance varieties, and augmentations of DGLA pairs, needed in our analysis of concrete deformation problems.

\subsection{Holomorphic vector bundles.} In Section \ref{holomorphic}, we consider the moduli space $\cM$ of stable rank $n$ holomorphic vector bundles $E$ with vanishing Chern classes on a compact K\"ahler manifold $X$. These holomorphic vector bundles are the ones that admit flat unitary connections. In $\cM$, we consider the cohomology jump loci
$$
\sV^{pq}_k(F)=\{E\in\sM \mid \dim H^q(X, E\otimes_{\sO_X} F \otimes_{\sO_X} \Omega^p_X)\geq k \}
$$
with the natural scheme structure, for fixed $p$ and fixed poly-stable bundle $F$ with vanishing Chern classes. We show that this deformation problem with cohomology contraints is governed by the DGLA pair $(\Ad(\enmo(E)), \Ap(E\otimes F))$ constructed from Dolbeault complexes (Theorem \ref{vb1}). Let 
\begin{align*}
\sQ(E) &=\{\eta\in H^1(X, \enmo(E))\mid
\eta\wedge\eta=0\in H^2(X, \enmo(E))\}, \\
\sR^{pq}_k(E; F) & =\{\eta\in \sQ(E) \mid  \dim H^q(H^\ubul(X, E\otimes F\otimes \Omega^p_X),\eta \wedge\cdot )\geq k\},
\end{align*}
with natural scheme structures defined using \ref{subsCpx}. Formality of the DGLA pair implies:

\begin{thrm}\label{thrmHolVB} Let $X$ be a compact K\"ahler manifold. Let $E$ and $F$ be a stable and, respectively, a poly-stable holomorphic vector bundle with vanishing Chern classes on $X$. Then there is an isomorphism of formal schemes
$$
 \sV^{pq}_k(F)_{(E)}\cong\sR^{pq}_k(E;F)_{(0)}.
$$ 
\end{thrm}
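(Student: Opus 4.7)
The strategy is to combine three ingredients: the identification of the deformation-with-cohomology-constraints functor for $\sV^{pq}_k(F)_{(E)}$ with the cohomology jump functor of a Dolbeault DGLA pair (Theorem~\ref{vb1}), the homotopy invariance of the cohomology jump functors (Theorem~\ref{independence}), and formality of the Dolbeault DGLA pair.

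First, I would invoke Theorem~\ref{vb1} to obtain an isomorphism of functors on Artinian local $\cc$-algebras
$$
\sV^{pq}_k(F)_{(E)}\;\cong\;\Def^q_k\bigl(\Ad(\enmo(E)),\, \Ap(E\otimes F)\bigr).
$$
Next, the goal is to produce a zig-zag of morphisms of DGLA pairs connecting $(\Ad(\enmo(E)), \Ap(E\otimes F))$ to the cohomology pair $\bigl(H^\ubul(X, \enmo(E)),\, H^\ubul(X, E\otimes F\otimes \Omega^p_X)\bigr)$ carrying zero differentials. This is the formality statement. Since $E$ is stable and $F$ poly-stable with vanishing Chern classes on a compact K\"ahler manifold, Donaldson--Uhlenbeck--Yau supplies Hermitian--Einstein metrics on both, and hence flat unitary structures on $\enmo(E)$ and on $E\otimes F$. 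The K\"ahler identities then hold for the twisted Dolbeault complexes; the principle of two differentials and the resulting $\partial\bar\partial$-lemma produce a standard zig-zag through harmonic representatives, in the spirit of Deligne--Griffiths--Morgan--Sullivan and Simpson. I would check that this zig-zag respects the module structure of $\Ap(E\otimes F)$ over $\Ad(\enmo(E))$, so that it gives morphisms of DGLA pairs that are $1$-equivalences on the DGLA component and quasi-isomorphisms (hence $i$-equivalences for every $i$) on the module component. Then Theorem~\ref{independence} yields
$$
\Def^q_k\bigl(\Ad(\enmo(E)),\Ap(E\otimes F)\bigr)\;\cong\;\Def^q_k\bigl(H^\ubul(X,\enmo(E)),\, H^\ubul(X,E\otimes F\otimes \Omega^p_X)\bigr).
$$

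Finally, I would unpack the right-hand side. For a DGLA pair with vanishing differential, the Maurer--Cartan equation for $\eta\in H^1(X,\enmo(E))\otimes\mathfrak{m}_A$ reduces to $\eta\wedge\eta=0$, cutting out the formal germ of $\sQ(E)$ at $0$. The cohomology jump ideals of Section~\ref{subsCpx} applied to the deformed complex $(H^\ubul(X,E\otimes F\otimes \Omega^p_X),\,\eta\wedge\cdot)$ then define precisely the formal germ of $\sR^{pq}_k(E;F)$ at $0$ as a subfunctor of $\sQ(E)_{(0)}$. Assembling these identifications yields the desired isomorphism of formal schemes.

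The main obstacle is the formality of the pair. For the DGLA component alone this is classical Dolbeault formality for a bundle with a flat unitary connection. The new content is that the formality zig-zag must be lifted to the pair level, i.e., the harmonic projector for the Hermitian--Einstein metric on $E\otimes F\otimes \Omega^p_X$ must be compatible, modulo exact terms, with Dolbeault multiplication by harmonic $\enmo(E)$-forms. This compatibility is exactly what the two-differential/$\partial\bar\partial$-lemma machinery provides in the twisted setting, but writing it down cleanly as a morphism of DGLA pairs is where the technical work lies.
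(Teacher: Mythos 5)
Your overall route is the one the paper takes: Theorem \ref{vb1} to identify $\sV^{pq}_k(F)_{(E)}$ with $\Def^q_k(\Ad(\enmo(E)),\Ap(E\otimes F))$, formality of that Dolbeault pair via flat unitary (Hermitian--Einstein) metrics in the DGMS/Simpson style, and Theorem \ref{independence} to replace the pair by its cohomology pair. Those steps match the paper's Theorem \ref{formal1} and Corollary \ref{corSVB}.

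There is, however, a genuine gap in your final ``unpacking'' step. The functors $\Def(C)$ and $\Def^q_k(C,M)$ are \emph{not} sets of Maurer--Cartan solutions: they are isomorphism classes of the groupoid $\sC(C;A)$, i.e.\ Maurer--Cartan solutions modulo the gauge action of $\exp(C^0\otimes m)$. For the cohomology pair $(H^\ubul(X,\enmo(E)),H^\ubul(X,E\otimes F\otimes\Omega^p_X))$ the differential is zero, so the gauge action of $\lambda\in H^0(X,\enmo(E))\otimes m$ on $\eta$ is $\eta\mapsto \exp(\ad\lambda)\,\eta$, which is not automatically trivial. Consequently, the locus $\{\eta \mid \eta\wedge\eta=0\}$ by itself does not prorepresent $\Def$ of the cohomology DGLA; one must know $[H^0,H^1]=0$ so that the groupoid is discrete (this is Lemma \ref{lemF}, Corollary \ref{corQ}, and Corollary \ref{cohformal} in the paper). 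This is precisely where stability, as opposed to poly-stability, of $E$ enters: $E$ stable implies $H^0(X,\enmo(E))=\cc\cdot\id_E$, and $[\id_E,-]=0$, so the gauge action on the quadratic cone and on the jump ideals is trivial. In your write-up stability is used only to obtain the Hermitian--Einstein metric, for which poly-stability already suffices; so your argument, as written, would equally ``prove'' the theorem for a poly-stable, non-stable $E$ --- which is false: the paper's remark following Corollary \ref{corSVB} notes that $[H^0,H^1]=0$ fails in general for poly-stable bundles, and the moduli space need not even have quadratic singularities at such points. Adding the verification $H^0(X,\enmo(E))=\cc\cdot\id_E$ (simplicity of a stable bundle) and invoking Corollary \ref{cohformal} closes the gap.
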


This generalizes the result of  Nadel \cite{n} and Goldman-Millson \cite{gm} that $\sM_{(E)}\cong \sQ(E)_{(0)}$, and it  also generalizes a result of Green-Lazarsfeld \cite{gl1,gl2} for rank $n=1$ bundles. It also implies that if $k=\dim H^q(X, E\otimes F\otimes \Omega^p_X)$, then $\sV^{pq}_k(F)$ has  quadratic algebraic singularities at $E$ (Corollary \ref{corQuad}), a result also shown for $F\otimes\Omega_X^p=\cO_X$ by Martinengo \cite{ma} and the second author \cite{w}.

%When $X$ is a curve, that is when we look at higher-rank Brill-Noether loci of bundles $E$ twisted by a poly-stable bundle $F$, we refine a result of Polischuk \cite{Pol} obtained by different techniques (Corollary \ref{corBN}).

\subsection{Irreducible local systems and Higgs bundles.} In Section \ref{localsystem}, we consider the moduli space $\mb$ of irreducible rank $n$ local systems $L$ on a compact K\"ahler manifold $X$, and we consider the cohomology jump loci
$$
\cV^i_k(W)=\{ L\in \mb\mid \dim_\bC H^i(X,L\otimes_\bC W)\ge k \}
$$
with the natural scheme structure, for a fixed semi-simple local system $W$ of any rank. The DGLA pair governing this deformation problem with cohomology constraints is $(\Ar(\enmo(L)), \Ar(L\otimes W))$, constructed from the de Rham complex. Parallel results and proofs similar to the case of holomorphic vector bundles hold. Let
$$
\sQ(L) =\{\eta\in H^1(X, \enmo(L)) \mid 
\eta\wedge\eta=0\in H^2(X, \enmo(L))\},$$
$$
\sR^i_k(L;W)  =\{ \eta\in \sQ(L)\mid \dim H^i(H^\ubul(X, L\otimes W), \eta\wedge \cdot)\geq k \},
$$
with the natural scheme structures. 

\begin{thrm}\label{thmIrrLS} Let $X$ be a compact K\"ahler manifold. Let $L$ be an irreducible local system on $X$, and let $W$ be a semi-simple local system. The isomorphism of formal schemes
$$
(\mb)_{(L)}\cong \sQ(L)_{(0)}
$$
induces an isomorphism
$$
\sV^i_k(W)_{(L)}\cong\, \sR^i_k(L;W)_{(0)}. 
$$
\end{thrm}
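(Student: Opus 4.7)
The plan is to mirror the strategy of Theorem \ref{thrmHolVB} in the de Rham rather than Dolbeault setting. First I would identify the deformation problem with cohomology constraints at $L$ as being governed by the DGLA pair $(\Ar(\enmo(L)), \Ar(L\otimes W))$, where the first component is the de Rham DGLA of $\enmo(L)$ (governing deformations of $L$ per Goldman-Millson) and the second is its natural differential graded module given by the twisted de Rham complex of $L\otimes W$ with the action induced from the natural $\enmo(L)$-action on $L$. Explicitly, one verifies $\sV^i_k(W)_{(L)} \cong \Def^i_k(\Ar(\enmo(L)), \Ar(L\otimes W))$ as subfunctors of $(\mb)_{(L)} \cong \Def(\Ar(\enmo(L)))$; this is the local-system analogue of Theorem \ref{vb1}, and is proved by interpreting a deformation datum $\eta$ over an Artinian base $A$ as a twist of the coefficient system and running the cohomology jump ideal machinery of Section \ref{subsCpx} on the resulting twisted complex.

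The second step is formality of the DGLA pair. Because $L$ is irreducible and $W$ is semi-simple, by Corlette's theorem both $\enmo(L)$ and $L\otimes W$ underlie harmonic bundles on the compact K\"ahler manifold $X$. The principle of two types for harmonic bundles (Deligne-Griffiths-Morgan-Sullivan, extended by Simpson) then produces a zig-zag of morphisms of DGLA pairs linking $(\Ar(\enmo(L)), \Ar(L\otimes W))$ to $(H^\ubul(X,\enmo(L)), H^\ubul(X, L\otimes W))$ with zero differentials and cup-product action, each leg being a $1$-equivalence on the DGLA component and an $\infty$-equivalence on the module component; the intermediate object is the subcomplex of $d'$-closed forms, which is simultaneously a sub-DGLA and a DG-submodule, and the projection to cohomology is a quasi-isomorphism of pairs by the $d'd''$-lemma. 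Theorem \ref{independence} then gives $\Def^i_k(\Ar(\enmo(L)), \Ar(L\otimes W)) \cong \Def^i_k(H^\ubul(X,\enmo(L)), H^\ubul(X, L\otimes W))$.

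Finally, one reads off $\Def^i_k$ of the formal pair directly. With zero differentials the Maurer-Cartan equation collapses to $\eta\wedge\eta=0$, cutting out $\sQ(L)$; the gauge action is trivial thanks to irreducibility giving $H^0(X,\enmo(L)) = \bC\cdot\id$. The $\eta$-twisted module complex becomes $(H^\ubul(X, L\otimes W), \eta\wedge\cdot)$, and by definition its cohomology jump ideals cut out $\sR^i_k(L;W)$ at the origin, delivering $\sV^i_k(W)_{(L)} \cong \sR^i_k(L;W)_{(0)}$ compatibly with the ambient Goldman-Millson isomorphism $(\mb)_{(L)}\cong\sQ(L)_{(0)}$. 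The step I expect to be the main obstacle is the formality in step two: the zig-zag must be constructed so that the harmonic intermediate complex is simultaneously a sub-DGLA and a compatible DG-submodule, so that the bracket and the action are preserved in a single coherent choice of harmonic metrics; applying the principle of two types to a coefficient system that is a tensor product of two distinct harmonic bundles, rather than a single one, is what requires the full strength of Simpson's harmonic theory and must be handled carefully to fit the pair framework of Theorem \ref{independence}.
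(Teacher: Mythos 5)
Your proposal is correct and takes essentially the same route as the paper: identify $\sV^i_k(W)_{(L)}\cong\Def^i_k(\Ar(\enmo(L)),\Ar(L\otimes W))$ as the local-system analogue of Theorem \ref{vb1}, prove formality of the pair via Simpson's harmonic metric and the subcomplex of $d'$-closed forms exactly as in Theorems \ref{formal1} and \ref{lsformal}, and then conclude through Corollary \ref{cohformal}, with irreducibility used precisely to verify $[H^0,H^1]=0$ via $H^0(X,\enmo(L))=\bC\cdot\id_L$. The concern you flag about formality is resolved the same way in the paper, which simply invokes the harmonic metric in the sense of Simpson in place of the flat unitary metric of the vector-bundle case.
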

The proof of this result generalizes the main ``strong linearity'' result of  Popa-Schnell as stated in \cite[Theorem 3.7]{PS}, proved there for rank one local systems $E$, $W=\bC_X$, and $X$ a smooth projective complex variety, see Remark \ref{rmkPS}.

Higgs bundles are similarly treated in Section \ref{secHB}, via a DGLA pair arising from the Higgs complex.

\subsection{Representations of the fundamental group.}
Also in Section \ref{localsystem}, we look at representations of the fundamental group. This case is closely related to the case of local systems. This relation, at the level of deformations, is a particular case of the relation between the deformation functors of an augmented DGLA pair and those of the DGLA pair itself, see Theorem \ref{mainaug}.

Let $X$ be a smooth manifold which is of the homotopy type of a finite type CW-complex, and let $x\in X$ be a base point. The set of group homomorphisms $\homo(\pi_1(X, x), GL(n, \mathbb{C}))$ has naturally a scheme structure. We denote this scheme by $\mathbf{R}(X, n)$. Every closed point $\rho\in\mathbf{R}(X, n)$ corresponds to a rank $n$ local system $L_\rho$ on $X$. Let $W$ be a local system of any rank on $X$. In $\mathbf{R}(X, n)$, we define the cohomology jump loci 
$$\ti\sV^i_k(W)=\{\rho\in \bR(X, n)\,|\, \dim H^i(X, L_\rho\otimes_\bC W)\geq k\}
$$
with the natural scheme structure (these were denoted $\sV^i_k$ in \ref{subsMot}).

 We show that an augmented DGLA pair $(\Ar(\enmo(L_\rho)), \Ar(L_\rho\otimes_\bC W); \varepsilon)$ governs this deformation problem with cohomology constraints (Theorem \ref{thmRP1}). This generalizes the result of Goldman-Millson \cite{gm} who showed that the deformation problem without cohomology constraints is governed by the augmented DGLA $(\Ar(\enmo(L_\rho)); \varepsilon)$. This also generalizes the result of Dimca-Papadima \cite{dp} mentioned in \ref{subsMot}. In \cite{dp}, $X$ is allowed to be a connected CW-complex of finite type by replacing the de Rham complex with Sullivan's  de Rham complex, but, for simplicity, we opted to leave out this topological refinement. 

Thus, the formal scheme of $\ti\sV^i_k(\bC_X^n)$ at the trivial representation only depends on the $k$-homotopy type of the topological space $X$, generalizing a result of \cite{dp} for the underlying reduced germs.

Let
$$
\sQ(\rho)=\{\eta\in Z^1(\pi_1(X), \mathfrak{gl}(n, \cc)_{\ad \rho})\,|\,\bar\eta\wedge\bar\eta=0 \in H^2(X, \enmo(L_\rho))\},$$
$$
\sR^i_k(\rho,W)=\{\eta\in  \sQ(\rho)\,|\, \dim H^i(H^\ubul(X, L_\rho\otimes_\bC W), \bar\eta\wedge\cdot)\geq k\},
$$
with the natural scheme structures, where $Z^1$ stands for the vector space of 1-cocycles, and $\bar\eta$ is the image of $\eta$ in cohomology.

\begin{thrm}\label{thmRPP}
Let $X$ be a compact K\"ahler manifold,  $\rho\in\mathbf{R}(X,n)$ be a semi-simple representation, and $W$ a semi-simple local system on $X$. Then
$$
\ti\sV^i_k(W)_{(\rho)}\cong \sR^i_k(\rho,W)_{(0)}.
$$
\end{thrm}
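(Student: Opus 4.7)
The plan is to combine three ingredients: Theorem \ref{thmRP1}, which identifies the representation-scheme cohomology jump functor with the cohomology jump functor of an augmented DGLA pair; formality of this augmented pair on a compact K\"ahler manifold when the coefficients are semi-simple; and the augmented analogue of Theorem \ref{independence}, which transports cohomology jump functors across $i$-equivalences. This mirrors the structure of the proof of Theorem \ref{thmIrrLS}, with the augmentation playing the role of a chosen frame at the basepoint.

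First, by Theorem \ref{thmRP1}, the functor of Artinian local algebras $\ti\sV^i_k(W)_{(\rho)}$ is naturally isomorphic to
\[
\Def^i_k\bigl(\Ar(\enmo(L_\rho)),\; \Ar(L_\rho\otimes_\bC W);\;\varepsilon\bigr),
\]
where $\varepsilon$ is evaluation at the basepoint $x\in X$. The proof then reduces to computing this deformation functor of an augmented DGLA pair built from the de Rham complex.

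Next, I would establish formality of the augmented DGLA pair. Since $\rho$ and $W$ are semi-simple, Corlette--Simpson theory provides harmonic metrics on $L_\rho$ and on $W$; combined with the K\"ahler identities, this yields, as in the proof of Theorem \ref{thmIrrLS}, a chain of quasi-isomorphisms from $(\Ar(\enmo(L_\rho)), \Ar(L_\rho\otimes_\bC W))$ to the cohomology pair $(H^\ubul(X,\enmo(L_\rho)), H^\ubul(X,L_\rho\otimes_\bC W))$. The augmentation $\varepsilon$ sends a form to its value at $x$; working with the harmonic model (as in the Goldman--Millson treatment of the non-pair case), one arranges the formality chain to be compatible with $\varepsilon$, producing formality of the augmented DGLA pair in the sense required by the augmented version of Theorem \ref{independence}. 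That theorem then gives
\[
\Def^i_k\bigl(\Ar(\enmo(L_\rho)),\Ar(L_\rho\otimes_\bC W);\varepsilon\bigr)\;\cong\;\Def^i_k\bigl(H^\ubul(X,\enmo(L_\rho)),H^\ubul(X,L_\rho\otimes_\bC W);\bar\varepsilon\bigr).
\]

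Finally, I would unwind the right-hand side to identify it with $\sR^i_k(\rho,W)_{(0)}$. The Maurer--Cartan condition in the cohomology DGLA is the quadratic cone equation $\eta\wedge\eta=0$, and retaining the augmentation forces $\eta$ to be represented by a genuine $1$-cocycle on $\pi_1(X)$ rather than just a cohomology class, so the Maurer--Cartan points of the augmented pair are precisely the elements of $\sQ(\rho)\subset Z^1(\pi_1(X),\mathfrak{gl}(n,\bC)_{\ad\rho})$. The cohomology jump condition $\dim H^i(H^\ubul(X,L_\rho\otimes_\bC W),\bar\eta\wedge\cdot)\ge k$ arising from Section \ref{subsCpx} is exactly that of $\sR^i_k(\rho,W)$. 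The contrast with $H^1$ in Theorem \ref{thmIrrLS} is precisely the effect of retaining the augmentation rather than gauging it away.

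The hard part will be the formality statement for the augmented DGLA pair. Formality of the underlying de Rham DGLA with semi-simple coefficients is classical, and formality of the DGLA pair (without augmentation) underlies Theorem \ref{thmIrrLS}; the new point is compatibility of the formality quasi-isomorphisms with the evaluation-at-point augmentation, which is indispensable for distinguishing representations from their conjugacy classes. I expect this to follow by carefully tracking the harmonic-form model through $\varepsilon$ and invoking Theorem \ref{mainaug} to compare the augmented and non-augmented deformation functors, but verifying that the augmentation extends through the entire formality chain in the category of augmented DGLA pairs is where the bulk of the work lies.
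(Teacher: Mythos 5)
Your first step (invoking Theorem \ref{thmRP1}) matches the paper, but the core of your plan --- proving formality of the \emph{augmented} pair and invoking an ``augmented version of Theorem \ref{independence}'' --- has a genuine gap, and it is exactly the gap that the paper's Theorem \ref{mainaug} is designed to circumvent. The augmented functors are defined through augmentation ideals, $\Def^i_k(C,M;\varepsilon)=\Def^i_k(C_0,M)$ with $C_0=\ker\varepsilon$, and taking kernels is not homotopy invariant unless the augmentation stays surjective in degree zero along the whole zig-zag. In your formality chain it does not: for $C=\Ar(\enmo(L_\rho))$ the evaluation $\varepsilon^0:C^0\to\mathfrak{g}$ is surjective, but on the harmonic-model subcomplex $K$ the degree-zero part consists of flat sections, so $\varepsilon(K^0)=\mathfrak{h}:=\varepsilon(H^0(C))\subsetneq\mathfrak{g}$, and on $H^\ubul(C)$ the induced augmentation is injective on degree zero. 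Comparing long exact sequences of $0\to C_0\to C\to\mathfrak{g}\to 0$ with its analogues for $K$ and $H^\ubul(C)$, one finds that $H^1(C_0)$ is an extension of $H^1(X,\enmo(L_\rho))$ by $\mathfrak{g}/\mathfrak{h}$, whereas the augmentation ideals of $K$ and of $H^\ubul(C)$ have first cohomology equal to just $H^1(X,\enmo(L_\rho))$. So your augmentation-compatible zig-zag does not restrict to quasi-isomorphisms of augmentation ideals, and no invariance theorem can apply to it; if you push the argument through anyway, the functor you compute at the end is prorepresented by $\sR^i_k(L_\rho,W)_{(0)}\subset H^1$ --- the answer of Theorem \ref{thmIrrLS} for the moduli of local systems --- not by $\sR^i_k(\rho,W)_{(0)}\subset Z^1$. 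Your closing claim that ``retaining the augmentation forces $\eta$ to be a genuine cocycle'' is the right heuristic, but your mechanism does not deliver it: the coboundary directions $B^1\cong\mathfrak{g}/\mathfrak{h}$ are precisely what gets lost.

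The paper's route requires no compatibility of the formality maps with $\varepsilon$ at all. Theorem \ref{mainaug}, proved via Goldman--Millson's transformation groupoid $\sC(C;A)\bowtie\exp(\mathfrak{g}\otimes m)$ and \cite[Lemma 3.8]{gm}, needs only formality of the \emph{unaugmented} pair (Theorem \ref{lsformal}), surjectivity of $\varepsilon^0$ on $C^0$, and injectivity of $\varepsilon^0$ on $H^0(C)$ (this is where semi-simplicity of $\rho$ is checked, via the splitting into simples); the factor $\exp(\mathfrak{g}\otimes m)$ is carried along untouched by the quasi-isomorphisms, and the output is prorepresentability of $\Def^i_k(C,M;\varepsilon)$ by $({}^h\sR^i_k(C,M)\times\mathfrak{g}/\mathfrak{h})_{(0)}$ --- note the extra factor $\mathfrak{g}/\mathfrak{h}$ that your approach drops. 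This is Corollary \ref{rep1}. The final step is Lemma \ref{rep2}: the non-canonical splitting $Z^1(\pi_1(X),\mathfrak{gl}(n,\cc)_{\ad\rho})\cong H^1(X,\enmo(L_\rho))\times\mathfrak{g}/\mathfrak{h}$, coming from $B^1\cong\mathfrak{g}/\mathfrak{h}$, under which $\sR^i_k(L_\rho,W)\times\mathfrak{g}/\mathfrak{h}\cong\sR^i_k(\rho,W)$; combining it with Corollary \ref{rep1} gives the theorem. If you want to rescue your idea conceptually, the homotopy-correct replacement for $\ker\varepsilon$ is a homotopy fiber of $\varepsilon$, which is exactly what the transformation-groupoid formalism models --- but making that precise amounts to re-proving Theorem \ref{mainaug}.
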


This generalizes the result of  Simpson \cite{s1} that $
\mathbf{R}(X, n)_{(\rho)}\cong \sQ(\rho)_{(0)}
$. With the same assumptions, if in addition $k=\dim H^i(X,L_\rho)$, then $\ti\sV^i_k(W)$ has quadratic singularities at $\rho$ (Corollary \ref{corQRep}).

\subsection{Other consequences of formality.}  Theorems \ref{thrmHolVB}, \ref{thmIrrLS}, \ref{thmRPP} describing the local structure of cohomology jump loci $\sV^i_k$ in terms of cohomology resonance loci $\sR^i_k$ are consequences of the formality of the DGLA pair governing the corresponding deformation problem with cohomology constraints. In Section \ref{secIneq}, we show that formality for a DGLA pair $(C,M)$ leads to more information about the geometry of cohomology resonance loci and about the possible shapes of the sequence of Betti numbers $\dim H^i(M)$. This puts together and extends to DGLA pairs a method which was previously employed in different setups by Lazarsfeld-Popa \cite{LP}, the first author \cite{B-h}, and Popa-Schnell \cite{PS}.

\subsection{Analytic and \'etale local germs.}
According to Artin's approximation theorem \cite{ar}, two analytic germs $(X, x)$ and $(Y, y)$  are isomorphic if and only if the formal schemes $X_{(x)}$ and $Y_{(y)}$ are isomorphic. Furthermore, Artin also showed in \cite{ar1} that as \'etale local germs $(X, x)$ and $(Y, y)$ are isomorphic in the algebraic category. Thus, our results on isomorphisms between formal schemes can be stated as isomorphisms between analytic germs and also between algebraic \'etale germs.

\subsection{Notation.}
Throughout this paper, all rings are defined over $\bC$. By an Artinian local algebra, we mean an Artinian local algebra which is of finite type over $\bC$. Denote the category of Artinian local algebras with local homomorphisms by $\Art$ and the category of sets by $\Set$. Suppose $\mathbf{F}$ is a functor from $\Art$ to $\Set$. We shall say a formal scheme $\mathcal{X}$ consisting only one closed point (or a complete local ring $R$ resp.) prorepresents the functor $\mathbf{F}$, if $\homo(\Gamma(\mathcal{X}, \sO_\mathcal{X}), -)$ (resp. $\homo(R, -)$) is naturally isomorphic to the functor $\mathbf{F}$. By abusing notation, we will frequently use the same letter to denote a closed point in some moduli space and the object the closed point represents. Also by abusing notation, we will frequently use a formal scheme $\mathcal{X}$ (supported at a point) to denote the functor it prorepresents, i.e., $\homo(-, \mathcal{X}): \Art\to \Set$. 

\begin{acknowledgements}
We thank Donu Arapura, Christian Schnell, Mihnea Popa for several helpful conversations and to the referee for important comments. 
\end{acknowledgements}
%?- Similarly, {\it Alexander jump loci} subfunctors ??

%General facts:

%(a) independence under quasi-isoms;

%?(b) existence of minimal quasi-isomorphic choices; equivalence of categories of DG modules over quasi-isomorphic DGLA recovering BGG correspondence as particular case???

%?(c) Quadratic singularities, linear slices.  (this falls into the formal case)

%?(d) nice case: finite. Gives universal Aomoto and thus a global variety representing the functors $\Def$, $\Sigma^i_k$; 

%(e) nice case: abelian. Gives global exp.

%(f) nice case: formal. Allows even nicer descriptions. Gives the DGLA "albanese fiber dimension", exactness of universal Aomoto, propagation of $\Sigma^i$, dimension bounds on $\Sigma^i_k$, bounds on the Betti numbers  $h^i(C,M)$.

%?(g) nice case: positive weights. Gives linearity of $\Sigma^i_k$. (maybe we can focus on the nonabelian case)

%\subsection{Deformation problems.} Given a deformation problem with the attached DGLA $C$, or given a deformation problem with constraints with the attached DGLA pair $(C,M)$, we explicitly run (a) through (g) for it.

%(i) local systems on manifolds: generalizing [DP] to non-reduced case and away from origin; analogs of Popa-Schnell results for local systems via topological albanese; Alexander jump loci vs cohomology jump loci vs Milnor fibers.

%(ii) vector bundles

%(iii) rings and modules: Hoschild DGLA

%(iv) ...

%------------
\section{Cohomology jump loci of complexes}\label{secCJI}
Let $R$ be a noetherian ring, and let $E^\ubul$ be a complex of $R$-modules, bounded above. Suppose $H^i(E^\ubul)$ is a finitely generated $R$-module. In this section, we define the notion of cohomology jump ideals for the complex $E^\ubul$. Throughout this section, we assume all complexes of $R$-modules are bounded above and have finitely generated cohomology. 

First, we want to replace $E^\ubul$ by a complex of finitely generated free $R$-modules. This is achieved by a lemma of Mumford. 
\begin{lemma}[\cite{h}-III.12.3]\label{lemM}
Let $R$ and $E^\ubul$ be defined as above. There exists a bounded above complex $F^\ubul$ of finitely generated free $R$-modules and a morphism of complexes $\phi: F^\ubul\to E^\ubul$ which is a quasi-isomorphism. 
\end{lemma}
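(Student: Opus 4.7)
I would build $F^\bullet$ by descending induction on the degree, since $E^\bullet$ is bounded above. Say $E^j=0$ for $j>n$. The idea is to produce, stage by stage, a bounded-above complex $F^{\geq i}$ of finitely generated free $R$-modules together with a morphism $\phi:F^{\geq i}\to E^{\geq i}$ (extending to zero outside the range) such that
\begin{itemize}
\item $H^j(\phi)$ is an isomorphism for every $j>i$, and
\item $H^i(\phi)$ is surjective.
\end{itemize}
At the base step I set $F^j=0$ for $j>n$ and let $F^n$ be the free $R$-module on a finite generating set of $H^n(E^\bullet)=E^n/d(E^{n-1})$, with $\phi^n$ sending basis vectors to chosen lifts of these generators; this makes $H^n(\phi)$ surjective.

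For the inductive step, assume $F^{\geq i+1}\to E^{\geq i+1}$ is given with the properties above (so $H^{i+1}(\phi)$ is surjective and higher $H^j(\phi)$ are isomorphisms). Two things still need to be fixed when we pass from $i+1$ to $i$: the map $H^{i+1}(\phi)$ must become \emph{injective}, and $H^i(\phi)$ must become surjective. I would define $F^i$ as a direct sum $G^i\oplus G'^i$ of two finitely generated free modules. For $G^i$: pick a finite generating set of $\ker\bigl(H^{i+1}(\phi)\bigr)\subset H^{i+1}(F^{\geq i+1})=\ker(d_F^{i+1})$ (this is finitely generated because it is a submodule of the finitely generated free module $F^{i+1}$ over the noetherian ring $R$), lift each generator to $x_\alpha\in\ker(d_F^{i+1})$, choose $y_\alpha\in E^i$ with $d_E^i(y_\alpha)=\phi^{i+1}(x_\alpha)$, and declare the corresponding basis vector $g_\alpha$ of $G^i$ to satisfy $d_F^i(g_\alpha)=x_\alpha$, $\phi^i(g_\alpha)=y_\alpha$. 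For $G'^i$: pick a finite generating set of $H^i(E^\bullet)$, lift each generator to a cocycle $z_\beta\in E^i$, and let the corresponding basis vector $h_\beta$ of $G'^i$ satisfy $d_F^i(h_\beta)=0$, $\phi^i(h_\beta)=z_\beta$.

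Then I would verify directly from the construction that the new complex $F^{\geq i}\to E^{\geq i}$ continues to satisfy the inductive properties: the $G^i$-summand enlarges $\mathrm{im}(d_F^i)$ just enough to kill $\ker H^{i+1}(\phi)$, yielding an isomorphism on $H^{i+1}$ (surjectivity is preserved because adding to $d_F^i$ only affects $F^\bullet$, not the image in $H^{i+1}(E^\bullet)$), while the $G'^i$-summand makes $H^i(\phi)$ surjective. The higher cohomologies are untouched. Iterating produces a bounded-above complex $F^\bullet$ of finitely generated free $R$-modules and a morphism $\phi:F^\bullet\to E^\bullet$ inducing an isomorphism on every $H^j$, hence a quasi-isomorphism.

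The main obstacle is the finiteness of each $F^i$: one must argue that at every stage the modules whose generators we are adjoining are finitely generated. Surjectivity follows from the finite generation of the $H^i(E^\bullet)$ built into the hypothesis, while for the kernel term $\ker H^{i+1}(\phi)$ one needs the noetherian hypothesis on $R$ together with the fact that $F^{i+1}$ is already finitely generated free. Once this is secured, the descending induction produces $F^\bullet$ with the required properties, although it need not be bounded below.
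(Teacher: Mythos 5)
Your proof is correct. The paper does not actually prove this lemma---it cites Hartshorne III.12.3---and your descending induction (adjoining one block of free generators mapping onto generators of $H^i(E^\bullet)$ and a second block killing $\ker H^{i+1}(\phi)$, with noetherianness of $R$ plus finite generation of the cohomology securing finiteness at each stage) is essentially the standard argument given in that reference, so it matches the proof the paper relies on.
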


\begin{defprop}\label{defprop}{\it 
Under the above notations, we define the {\bf cohomology jump ideals} to be 
$$
J^i_k(E^\ubul)=I_{rank(F^i)-k+1}(d^{i-1}\oplus d^{i})
$$
where $I$ denotes the determinantal ideal, $d^{i-1}: F^{i-1}\to F^i$ and $d^i: F^i\to F^{i+1}$ are the differentials of the complex $F^\ubul$. Then $J^i_k(E^\ubul)$ does not depend on the choice of $F^\ubul$. }
\end{defprop}
\begin{proof} This is a generalization of the proof of the Fitting Lemma from \cite[20.4]{eisenbud}. We can assume $R$ is local. By \cite[Proposition 4.4.2]{R}, $F^\ubul$ has a unique minimal free resolution $G^\ubul \rightarrow F^\ubul$. Let $\rho:G^\ubul\rightarrow E^\ubul$ the composition with $\phi$. If $\bar{\phi}: \bar{F}^\ubul\rightarrow E^\ubul$ is another free resolution of $E^\ubul$ with finite rank terms, then by \cite[Theorem 3.1.7]{R}, there exists a map $\beta: G^\ubul\rightarrow \bar{F}^\ubul$ unique up to homotopy such that $\bar{\phi}\beta$ is homotopic with $\rho$. Hence $\beta$ is a quasi-isomorphism, and so $G^\ubul$ is a minimal free resolution of $\bar{F}^\ubul$ also. Thus, it is enough to prove  that $J^i_k(E^\ubul)$ is the same if computed with $F^\ubul$ and $G^\ubul$. 

By \cite[Proposition 4.4.2]{R}, $F^\ubul$ is a direct sum of $G^\ubul$ with a direct sum of shifts of the trivial complex 
$$ 0\rightarrow R{\mathop{\rightarrow}^1} R\rightarrow 0.$$ It is enough, by induction, to assume that only one such shifted trivial complex is added to $G^\ubul$ to obtain $F^\ubul$. Fix $i$. There are four shifts of the trivial complex that can be added to $G^{i-1}\rightarrow G^i\rightarrow G^{i+1}$. Let $r$ be the rank of $G^i$ and $M$ the matrix of $d_G^{i-1}\oplus d_G^{i}$. The ideals $I_{rank(F^i)-k}(d_F^{i-1}\oplus d_F^{i})$ for each of the four possible cases  are: 
\begin{align*}
I_{r-k}\begin{pmatrix} M & 0  \end{pmatrix},  I_{r+1-k}\begin{pmatrix} M & 0 & 0 \\ 0 & 1 & 0 \end{pmatrix}, I_{r+1-k}\begin{pmatrix} M & 0 \\ 0 & 0 \\ 0 & 1 \end{pmatrix}, I_{r-k}\begin{pmatrix} M  \\ 0 \end{pmatrix},
\end{align*}
and all are equal to $I_{r-k}(M)$ as we wanted to show.
\end{proof}

\begin{cor}\label{idealindep}
If $E^\ubul$ is quasi-isomorphic to $E'^\ubul$, then $J^i_k(E^\ubul)=J^i_k(E'^\ubul)$. 
\end{cor}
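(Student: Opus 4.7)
The plan is to reduce the statement to Definition-Proposition \ref{defprop} by observing that the formula for $J^i_k$ depends only on the differentials of the chosen free resolution $F^\ubul$, not on the target complex. So the key is to exhibit a single $F^\ubul$ that serves as a valid free resolution of both $E^\ubul$ and $E'^\ubul$ simultaneously.

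First I would treat the case of a direct quasi-isomorphism $f: E^\ubul \to E'^\ubul$. By Lemma \ref{lemM}, pick a bounded above complex $F^\ubul$ of finitely generated free $R$-modules together with a quasi-isomorphism $\phi: F^\ubul \to E^\ubul$. Then $f \circ \phi: F^\ubul \to E'^\ubul$ is a composition of quasi-isomorphisms, hence a quasi-isomorphism, so $F^\ubul$ is also a valid free resolution of $E'^\ubul$ in the sense required by Definition-Proposition \ref{defprop}. Since the formula
$$J^i_k(-) = I_{\mathrm{rank}(F^i)-k+1}(d^{i-1}\oplus d^{i})$$
only involves the ranks and differentials of $F^\ubul$, the two ideals $J^i_k(E^\ubul)$ and $J^i_k(E'^\ubul)$ computed using $F^\ubul$ coincide. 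The symmetric case $g: E'^\ubul \to E^\ubul$ is handled by choosing a free resolution of $E'^\ubul$ instead and composing with $g$.

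Finally, if "quasi-isomorphic" is interpreted in the standard sense of being connected by a zigzag of quasi-isomorphisms, I would iterate the above step along the zigzag. Note that each intermediate term in the zigzag must itself be a bounded-above complex with finitely generated cohomology for Lemma \ref{lemM} to apply; in the roof case $E^\ubul \xleftarrow{s} G^\ubul \xrightarrow{t} E'^\ubul$, we get $J^i_k(E^\ubul) = J^i_k(G^\ubul) = J^i_k(E'^\ubul)$ by applying the previous paragraph to $s$ and $t$ respectively.

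I do not anticipate any serious obstacle here: the entire content of the Corollary is already contained in the well-definedness statement of Definition-Proposition \ref{defprop}, since composing a free resolution with a quasi-isomorphism produces another free resolution. The only minor point to verify carefully is the zigzag case, but this is routine once the single-arrow case is in hand.
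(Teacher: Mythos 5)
Your proof is correct and takes essentially the same route as the paper, which states the corollary without proof precisely because it follows at once from Definition-Proposition \ref{defprop}: a free resolution of $E^\ubul$ composed with a quasi-isomorphism is a free resolution of $E'^\ubul$, so both ideals are computed from the same $F^\ubul$. Your extra care with the zigzag case (resolving the apex of each roof, under the section's standing assumption that all complexes are bounded above with finitely generated cohomology) is the right way to make the derived-category reading rigorous, but it is routine and implicit in the paper.
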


\begin{cor}\label{tensor}
Let $R$ and $E^\ubul$ be defined as above, and let $S$ be a noetherian $R$-algebra. Moreover, suppose $E^\ubul$ is a complex of flat $R$-modules, then $J^i_k(E^\ubul)\otimes_R S=J^i_k(E^\ubul\otimes_R S)$, where we regard $E^\ubul\otimes_R S$ as a complex of $S$ modules. 
\end{cor}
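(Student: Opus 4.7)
The plan is to reduce everything to a single free resolution and then observe that both the quasi-isomorphism property and the formation of determinantal ideals are compatible with base change under our flatness hypothesis.

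First I would choose, by Lemma \ref{lemM}, a bounded above complex $F^\ubul$ of finitely generated free $R$-modules together with a quasi-isomorphism $\phi:F^\ubul\to E^\ubul$. By Definition-Proposition \ref{defprop},
\[
J^i_k(E^\ubul)=I_{\rank(F^i)-k+1}(d_F^{i-1}\oplus d_F^{i}).
\]
Since each $F^i$ is free of finite rank, $F^\ubul\otimes_R S$ is still a bounded above complex of finitely generated free $S$-modules, and the differentials of $F^\ubul\otimes_R S$ are obtained from those of $F^\ubul$ by extending scalars. Because determinantal ideals are compatible with ring extensions (the ideal generated by the size-$j$ minors of a matrix, extended to $S$, equals the ideal generated by the size-$j$ minors of the same matrix viewed over $S$), we get
\[
J^i_k(E^\ubul)\otimes_R S \;=\; I_{\rank(F^i)-k+1}\bigl((d_F^{i-1}\oplus d_F^{i})\otimes_R S\bigr).
\]
Thus, in order to invoke Definition-Proposition \ref{defprop} on the right-hand side and conclude $J^i_k(E^\ubul\otimes_R S)=J^i_k(E^\ubul)\otimes_R S$, it suffices to verify that $\phi\otimes_R S:F^\ubul\otimes_R S\to E^\ubul\otimes_R S$ is still a quasi-isomorphism.

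This is the key step, and it is exactly where flatness of $E^\ubul$ enters. Let $C^\ubul$ denote the mapping cone of $\phi$; it is a bounded above, acyclic complex of $R$-modules, and each term $C^n=F^{n+1}\oplus E^n$ is flat since $F^{n+1}$ is free and $E^n$ is flat by hypothesis. The point is that a bounded above acyclic complex of flat $R$-modules remains acyclic after tensoring with any $R$-algebra $S$. I would argue this by a descending induction on the degree: for $n$ above the support of $C^\ubul$ all modules and boundaries are zero, and in the short exact sequence
\[
0\to Z^n(C^\ubul)\to C^n\to B^{n+1}(C^\ubul)\to 0,
\]
acyclicity gives $Z^n=B^n$, so assuming inductively that $B^{n+1}$ is flat we deduce that $Z^n$, and hence $B^n$, is flat. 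Applying $-\otimes_R S$ to these short exact sequences therefore preserves exactness at every degree, so $C^\ubul\otimes_R S$ is again acyclic. Since the mapping cone of $\phi\otimes_R S$ is $C^\ubul\otimes_R S$, this shows that $\phi\otimes_R S$ is a quasi-isomorphism, as required.

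I do not anticipate any serious obstacle beyond this inductive flatness argument, which is the only place where the hypothesis that $E^\ubul$ consists of flat modules is used. Once $\phi\otimes_R S$ is known to be a quasi-isomorphism, $F^\ubul\otimes_R S$ is a finite free resolution of $E^\ubul\otimes_R S$ (in particular, $H^i(E^\ubul\otimes_R S)$ is a finitely generated $S$-module, so the right-hand side $J^i_k(E^\ubul\otimes_R S)$ is defined), and Definition-Proposition \ref{defprop} applied to $S$ and $E^\ubul\otimes_R S$ finishes the proof.
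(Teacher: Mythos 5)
Your proof is correct and follows essentially the same route as the paper's: pass to a finite free resolution $F^\ubul$, use that determinantal ideals commute with the base change $-\otimes_R S$, and reduce everything to showing that $\phi\otimes_R S$ remains a quasi-isomorphism. The only difference is one of detail: the paper asserts this last step directly from the boundedness and flatness of $E^\ubul$, whereas you supply the standard justification via the mapping cone and the flatness of its boundaries, which is an elaboration of the same argument rather than a different one.
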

\begin{proof}
By Lemma \ref{lemM}, there is a quasi-isomorphism $F^\ubul\to E^\ubul$, where $F^\ubul$ is a bounded above complex of finitely generated free $R$-modules. Since $E^\ubul$ is bounded above and flat, $F^\ubul\otimes_R S$ is quasi-isomorphic to $E^\ubul \otimes_R S$. Thus, $J^i_k(E^\ubul\otimes_R S)$ can be computed as determinantal ideals of $F^\ubul\otimes_R S$. Hence, the corollary follows from the fact that taking determinantal ideals commutes with taking tensor product. 
\end{proof}

When $R$ is a field, by definition $J^i_k(E^\ubul)=0$ if $\dim H^i(E^\ubul)\geq k$ and $J^i_k(E^\ubul)=R$ if $\dim H^i(E^\ubul)<k$. Thus, we have the following.

\begin{cor}\label{corFl}
Suppose $E^\ubul$ is a complex of flat $R$-modules. Then for any maximal ideal $m$ of $R$, $J^i_k(E^\ubul)\subset m$ if and only if $\dim_{R/m} H^i(E^\ubul\otimes_R R/m)\geq k$. 
\end{cor}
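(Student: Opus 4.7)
The plan is to reduce the statement over the general noetherian ring $R$ to the already-settled case of a field by base-changing to the residue field $R/m$, using Corollary \ref{tensor} as the bridge.

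First, I would apply Corollary \ref{tensor} with $S = R/m$: since $E^\ubul$ is a complex of flat $R$-modules and $R/m$ is a noetherian $R$-algebra, we get
\[
J^i_k(E^\ubul) \otimes_R R/m \;=\; J^i_k(E^\ubul \otimes_R R/m).
\]
The left-hand side is the ideal $(J^i_k(E^\ubul) + m)/m$ of $R/m$, so it equals the zero ideal precisely when $J^i_k(E^\ubul) \subset m$, and equals $R/m$ otherwise.

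Next, for the right-hand side, I would invoke the remark preceding the corollary: since $R/m$ is a field, $J^i_k(E^\ubul \otimes_R R/m)$ is either $0$ or the unit ideal, and it is $0$ if and only if $\dim_{R/m} H^i(E^\ubul \otimes_R R/m) \geq k$. Combining the two dichotomies, $J^i_k(E^\ubul) \subset m$ is equivalent to $\dim_{R/m} H^i(E^\ubul \otimes_R R/m) \geq k$, which is exactly the claim.

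The only subtlety — and the one step that requires a moment of care — is verifying the field-case remark itself, since one should check that the determinantal ideal $J^i_k$ computed from a free resolution of a complex of vector spaces really captures the cohomology-dimension jump. But over a field one can split the differentials into rank and kernel pieces so that the matrix of $d^{i-1} \oplus d^i$ has rank exactly $\mathrm{rank}(F^i) - \dim H^i$; then the size-$(\mathrm{rank}(F^i)-k+1)$ minors vanish iff this rank is $\leq \mathrm{rank}(F^i) - k$, i.e.\ $\dim H^i \geq k$. I don't anticipate any real obstacle here; the whole proof is essentially a one-line application of Corollary \ref{tensor} once the field case is in hand.
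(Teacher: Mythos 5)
Your proof is correct and matches the paper's own argument: the paper derives Corollary \ref{corFl} exactly by combining Corollary \ref{tensor} (base change to $S=R/m$) with the immediately preceding observation that over a field $J^i_k$ is $0$ or the unit ideal according to whether $\dim H^i \geq k$ or not. Your extra verification of the field case (splitting the differentials, or equivalently replacing the complex by its cohomology with zero differentials) is exactly what the paper leaves as "by definition."
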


Next, we address a partial generalization of Corollary \ref{idealindep}. 

\begin{defn} A morphism of complexes is $q$-{\bf equivalent} if it induces an isomorphism on cohomology up to degree $q$ and a monomorphism at degree $q+1$. For example, $\infty$-equivalent means quasi-isomorphic. 
\end{defn}

\begin{prop}\label{propQequiv}
Let $(R,m)$ be a noetherian local ring and let $f: E^\ubul\ra E'^\ubul$ be a $q$-equivalence between two bounded above complexes of free $R$-modules with finitely generated cohomology. If $f\otimes \id_{R/m}: E^\ubul\otimes_R R/m\ra E'^\ubul\otimes_R R/m$ is also a $q$-equivalence, then $J^i_k(E^\ubul)=J^i_k(E'^\ubul)$ for $i\le q$.
\end{prop}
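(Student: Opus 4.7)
The plan is to pass to minimal free models of $E^\ubul$ and $E'^\ubul$ and use minimality to pin down the shape of the lifted chain map in degrees $\leq q+1$; at that point the equality of jump ideals will follow from an elementary manipulation of determinantal ideals.

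First I would invoke \cite[Prop.~4.4.2]{R} (exactly as in the proof of Definition-Proposition~\ref{defprop}) to replace $E^\ubul$ by a minimal free model $\pi:F^\ubul\to E^\ubul$, where ``minimal'' means all differentials land in $mF^\ubul$, and similarly $\pi':G^\ubul\to E'^\ubul$. A bounded above acyclic complex of free $R$-modules is contractible, so the cone of $\pi$ remains contractible after $\otimes_R R/m$; in particular $\pi$ and $\pi'$ stay quasi-isomorphisms after reduction mod $m$. Using $K$-projectivity of the bounded above free complex $F^\ubul$, I lift $f$ to $\tilde f:F^\ubul\to G^\ubul$ such that $\pi'\circ\tilde f$ is homotopic to $f\circ\pi$. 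Since $\pi,\pi'$ and $\pi\otimes\id_{R/m},\pi'\otimes\id_{R/m}$ are all quasi-isomorphisms, $\tilde f$ and $\tilde f\otimes\id_{R/m}$ induce on cohomology the same maps as $f$ and $f\otimes\id_{R/m}$; hence both $\tilde f$ and $\tilde f\otimes\id_{R/m}$ are $q$-equivalences. By Definition-Proposition~\ref{defprop}, $J^i_k(E^\ubul)=J^i_k(F^\ubul)$ and $J^i_k(E'^\ubul)=J^i_k(G^\ubul)$, so it suffices to prove $J^i_k(F^\ubul)=J^i_k(G^\ubul)$ for $i\leq q$.

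Minimality forces the differentials of $F^\ubul\otimes R/m$ and $G^\ubul\otimes R/m$ to vanish, so the $q$-equivalence of $\tilde f\otimes\id_{R/m}$ says precisely that $\tilde f^i\otimes\id_{R/m}:F^i/mF^i\to G^i/mG^i$ is an isomorphism for $i\leq q$ and is injective for $i=q+1$. A square matrix over $R$ that is invertible mod $m$ is already invertible, so $\tilde f^i:F^i\to G^i$ is an isomorphism for every $i\leq q$. For $i=q+1$, injectivity mod $m$ forces some $\mathrm{rank}(F^{q+1})$-minor of the matrix of $\tilde f^{q+1}$ to be a unit in $R$, so after a change of basis on $G^{q+1}$ we may write $G^{q+1}=F^{q+1}\oplus K$ with $K$ free and $\tilde f^{q+1}$ the inclusion of the first summand.

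It remains to compare jump ideals. For $i<q$, all three maps $\tilde f^{i-1},\tilde f^i,\tilde f^{i+1}$ are isomorphisms, so $d_F^{i-1},d_F^i$ are identified with $d_G^{i-1},d_G^i$ and $J^i_k(F^\ubul)=J^i_k(G^\ubul)$ on the nose. The one substantive case is $i=q$: identifying $F^{q-1}=G^{q-1}$ and $F^q=G^q$ via $\tilde f$, commutativity with differentials forces $d_G^{q-1}=d_F^{q-1}$ and $d_G^q=(d_F^q,\,0)^T:F^q\to F^{q+1}\oplus K$. Hence the matrix of $d_G^{q-1}\oplus d_G^q$ is that of $d_F^{q-1}\oplus d_F^q$ with extra zero rows appended corresponding to $K$; since $\mathrm{rank}\,F^q=\mathrm{rank}\,G^q$, the size of the minors in the definition of $J^q_k$ is unchanged, and any such minor involving a zero row vanishes, so the two determinantal ideals coincide. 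I expect the only genuine subtlety to be the simultaneous preservation of both hypotheses (on $\tilde f$ and on $\tilde f\otimes\id_{R/m}$) in passing to minimal models; everything after that is bookkeeping on minors.
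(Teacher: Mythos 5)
Your proof is correct and follows essentially the same route as the paper's: pass to minimal free resolutions, lift $f$ to a chain map $\tilde f$ between them, use minimality plus the mod-$m$ hypothesis to conclude $\tilde f^i$ is an isomorphism for $i\le q$ and a split injection in degree $q+1$, and then compare determinantal ideals. The only cosmetic differences are that you establish the two auxiliary facts by matrix arguments (invertible mod $m$ implies invertible; a unit maximal minor gives the splitting $G^{q+1}=F^{q+1}\oplus K$) where the paper uses Nakayama/Tor lemmas, and you compare the jump ideals by direct identification of matrices with appended zero rows rather than via the sum formula $I_{r}(d^{i-1}\oplus d^i)=\sum_j I_j(d^{i-1})\cdot I_{r-j}(d^i)$.
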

\begin{proof}
Let $\phi: F^\ubul\to E^\ubul$ and $\phi': F'^\ubul\to E'^\ubul$ be the minimal free resolutions of $E^\ubul$ and $E'^\ubul$ respectively. Since $F^\ubul$ is a complex of free $R$-modules, we can lift the composition $f\circ \phi: F^\ubul\to E'^\ubul$ via $\phi'$ to $g:F\to F'$. Thus, we obtain the following diagram,
\begin{equation*}
\xymatrix{
F^\ubul\ar[r]^{g}\ar[d]^{\phi}&F'^\ubul\ar[d]^{\phi'}\\
E^\ubul\ar[r]^{f}&E'^\ubul
}
\end{equation*}
where $\phi$ and $\phi'$ are $\infty$-equivalent, and $f$ is $q$-equivalent. Since the diagram commutes, $g$ is also $q$-equivalent. Taking the tensor product of the above diagram with $R/m$ over $R$ gives us another diagram,
\begin{equation*}
\xymatrix{
F^\ubul\otimes_{R}R/m\ar[r]^{\bar{g}}\ar[d]^{\bar\phi}&F'^\ubul\otimes_{R}R/m\ar[d]^{\bar\phi'}\\
E^\ubul\otimes_{R}R/m\ar[r]^{\bar{f}}&E'^\ubul\otimes_{R}R/m
}
\end{equation*}
where $\bar{f}$ is $q$-equivalent by assumption. Since $E^\ubul$, $E'^\ubul$, $F^\ubul$ and $F'^\ubul$ are complexes of free, hence flat, $R$-modules, $\bar\phi$ and $\bar\phi'$ are $\infty$-equivalent. Therefore, $\bar{g}$ is also $q$-equivalent. 

Since $F$ and $F'$ are minimal, the differentials in $F^\ubul\otimes_{R}R/m$ and $F'^\ubul\otimes_{R}R/m$ are all zero. Therefore, $\bar{g}: F^\ubul\otimes_{R}R/m\to F'^\ubul\otimes_{R}R/m$ being $q$-equivalent means 
$$\bar{g}^i: F^i\otimes_{R}R/m\to F'^i\otimes_{R}R/m$$
is an isomorphism for $i\leq q$ and a monomorphism for $i=q+1$. In particular, $rank(F^i)=rank(F'^i)$ for $i\leq q$. 

By definition, $J^i_k(E^\ubul)=J^i_k(F^\ubul)$ and $J^i_k(E'^\ubul)=J^i_k(F'^\ubul)$. Hence we only need to show $J^i_k(F^\ubul)=J^i_k(F'^\ubul)$ for $i\le q$. Recall that $J^i_k(F^\ubul)=I_{rank(F^i)-k+1}(d^{i-1}\oplus d^i)$, where $d^{i-1}$ and $d^i$ are the differentials in $F^\ubul$. Notice that 
$$I_{rank(F^i)-k+1}(d^{i-1}\oplus d^i)=\sum_{0\leq j\leq rank(F^i)-k+1}I_{j}(d^{i-1})\cdot I_{rank(F^i)-k+1-j}(d^{i}).$$
Since $rank(F^i)=rank(F'^i)$ for $i\leq q$, to show $J^i_k(F^\ubul)=J^i_k(F'^\ubul)$ for $i\le q$, it suffixes to show $I_j(d^i)=I_j(d'^i)$ for any $j\in \nn$ and $i\leq q$, where $d'^i$ is the differential in $F'^\ubul$. This follows from following two statements, which will be proved in the next two lemmas.
\begin{enumerate}
\item $g^i: F^i\to F'^i$ is an isomorphism for $i\leq q$;
\item $g^{q+1}: F^{q+1}\to F'^{q+1}$ is injective and its image is a direct summand of $F'^{q+1}$. 
\end{enumerate}
\end{proof}
\begin{lemma}
Let $(R, m)$ be a noetherian local ring. Let $h: M\to M'$ be a morphism between finite free $R$-modules. Suppose $h\otimes \id_{R/m}: M\otimes_R R/m\to M'\otimes_R R/m$ is an isomorphism. Then $h$ is an isomorphism. 
\end{lemma}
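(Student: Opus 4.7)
The plan is to deduce the isomorphism from Nakayama's lemma, applied twice: once to the cokernel to get surjectivity, then once more to the kernel to get injectivity. This is the standard strategy, and since both $M$ and $M'$ are finitely generated, everything in sight is a finitely generated $R$-module, which is the hypothesis Nakayama requires.

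First I would set $C = \mathrm{coker}(h)$ and use right exactness of $-\otimes_R R/m$ to obtain an exact sequence
$$
M\otimes_R R/m \xrightarrow{h\otimes \id} M'\otimes_R R/m \to C\otimes_R R/m \to 0.
$$
Since $h\otimes \id$ is an isomorphism, in particular surjective, $C\otimes_R R/m = 0$, i.e.\ $C = mC$. Because $C$ is finitely generated (as a quotient of $M'$), Nakayama's lemma forces $C = 0$, so $h$ is surjective.

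Next I would consider the short exact sequence $0 \to K \to M \xrightarrow{h} M' \to 0$ where $K = \ker h$. Since $M'$ is free, this sequence splits, so $M \cong K \oplus M'$ and in particular $K$ is a direct summand of the finitely generated module $M$, hence itself finitely generated. The splitting also ensures that tensoring with $R/m$ preserves exactness on the left, giving
$$
0 \to K\otimes_R R/m \to M\otimes_R R/m \xrightarrow{h\otimes\id} M'\otimes_R R/m \to 0.
$$
Injectivity of $h\otimes \id$ then yields $K\otimes_R R/m = 0$, so $K = mK$, and Nakayama again gives $K = 0$. Therefore $h$ is an isomorphism.

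There is no real obstacle: the only subtle point is remembering to use that $M'$ is free in order to split the kernel sequence and conclude $K$ is finitely generated, which is what allows the second application of Nakayama. (As an alternative route, once one observes via $h\otimes R/m$ that $M$ and $M'$ have the same rank, one may simply represent $h$ by a square matrix $A$ over $R$, note that $\det(A) \bmod m$ is a unit because $h\otimes R/m$ is invertible, hence $\det(A) \notin m$ is itself a unit in the local ring $R$, and conclude that $A$, and therefore $h$, is invertible.)
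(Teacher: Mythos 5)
Your proof is correct and follows essentially the same route as the paper's: Nakayama applied to the cokernel for surjectivity, then the splitting of $0\to \ker(h)\to M\to M'\to 0$ (using freeness of $M'$) to preserve left exactness after tensoring with $R/m$, and Nakayama again on the kernel. The determinant argument you mention parenthetically is a valid shortcut, but your main argument matches the paper's.
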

\begin{proof}
The composition $M\stackrel{h}{\to}M'\to M'\otimes_R R/m$ is surjective. By Nakayama's lemma, $h$ is surjective. Since $M'$ is free, we have a short exact sequence 
$$0\to Ker(h)\otimes_R R/m\to M\otimes_R R/m\to M'\otimes_R R/m\to 0.$$
Since $M\otimes_R R/m\to M'\otimes_R R/m$ is an isomorphism, $Ker(h)\otimes_R R/m=0$. Hence, $Ker(h)=0$ by Nakayama's lemma. 
\end{proof}
\begin{lemma}
Let $(R, m)$ be a noetherian local ring. Let $h: M\to M'$ be a morphism between finite free $R$-modules. Suppose $h\otimes \id_{R/m}: M\otimes_R R/m\to M'\otimes_R R/m$ is injective. Then $h$ is injective, and the cokernel of $h$ is a free $R$-module. 
\end{lemma}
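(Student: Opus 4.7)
The plan is to reduce the statement to an application of Nakayama's lemma by lifting a suitable basis of $M'/mM'$ from the residue field to $M'$ itself.

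First I would choose a basis $e_1,\dots,e_n$ of the finite free $R$-module $M$. Since $\bar{h}:M\otimes_R R/m\to M'\otimes_R R/m$ is injective, the classes $\overline{h(e_1)},\dots,\overline{h(e_n)}$ are linearly independent in $M'\otimes_R R/m$ (a finite dimensional $R/m$-vector space). Extend them to a basis
\[
\overline{h(e_1)},\dots,\overline{h(e_n)},\bar f_{n+1},\dots,\bar f_{n'}
\]
of $M'\otimes_R R/m$, and choose arbitrary lifts $f_{n+1},\dots,f_{n'}\in M'$ of $\bar f_{n+1},\dots,\bar f_{n'}$.

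Next, I would show that $h(e_1),\dots,h(e_n),f_{n+1},\dots,f_{n'}$ is an $R$-basis of $M'$. By construction these elements generate $M'$ modulo $mM'$, so by Nakayama's lemma they generate $M'$ as an $R$-module. Since $M'$ is free of rank $n'$, any generating set of cardinality $n'$ is automatically a basis (for instance, the surjection $R^{n'}\twoheadrightarrow M'$ they define must be an isomorphism, as its determinant is a unit modulo $m$, hence a unit in the local ring $R$).

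From this description the conclusions are immediate: $h$ sends the basis $e_1,\dots,e_n$ of $M$ to part of a basis of $M'$, so $h$ is injective and identifies $M$ with a direct summand of $M'$; the complementary summand, generated by $f_{n+1},\dots,f_{n'}$, is a free $R$-module of rank $n'-n$ isomorphic to $\mathrm{coker}(h)$. The only subtle point, and hence the main (mild) obstacle, is verifying that the lifted elements actually form a basis rather than merely a generating set; this is the step where local-ring hypotheses are essential and where Nakayama's lemma together with the freeness of $M'$ is used crucially.
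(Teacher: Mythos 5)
Your proof is correct, but it takes a genuinely different route from the paper's. The paper argues homologically: it splits $h$ through its image, tensors the two short exact sequences $0\to \mathrm{Ker}(h)\to M\to \mathrm{Im}(h)\to 0$ and $0\to \mathrm{Im}(h)\to M'\to \mathrm{Coker}(h)\to 0$ with $R/m$, uses the factorization of $h\otimes \mathrm{id}_{R/m}$ to deduce $\mathrm{Tor}_1(\mathrm{Coker}(h), R/m)=0$, invokes the fact that over a noetherian local ring this vanishing forces $\mathrm{Coker}(h)$ to be free, and then unwinds the sequences to get freeness of $\mathrm{Im}(h)$ and $\mathrm{Ker}(h)$ and finally $\mathrm{Ker}(h)=0$. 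You instead lift a basis directly: injectivity mod $m$ says $h$ carries a basis of $M$ to a linearly independent family in $M'\otimes_R R/m$, which you extend to a basis, lift, and then certify as an $R$-basis of $M'$ via Nakayama plus the determinant trick (the matrix of the resulting surjection $R^{n'}\to M'$ has determinant a unit since it is a unit mod $m$). Your approach buys elementarity and slightly more generality — it uses no Tor and in fact never needs the noetherian hypothesis, only that $M$, $M'$ are finite free over a local ring — and it exhibits the splitting $M'\cong h(M)\oplus\langle f_{n+1},\dots,f_{n'}\rangle$ explicitly. The paper's approach is less hands-on but sits naturally alongside the surrounding machinery (minimal free resolutions and Tor arguments already used in Proposition \ref{propQequiv} and the companion lemma), so the authors get it almost for free within that framework. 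Both proofs are complete; your one "subtle point" (generating set of the right cardinality is a basis) is handled correctly by the determinant argument.
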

\begin{proof}
Denote the kernel, image and cokernel of $h$ by $Ker(h)$, $Im(h)$ and $Coker(h)$ respectively. Then we have two short exact sequences,
\begin{equation}\label{ses1}
0\to Ker(h)\to M\to Im(h)\to 0
\end{equation}
and
\begin{equation}\label{ses2}
0\to Im(h)\to M'\to Coker(h)\to 0
\end{equation}
Since $M$ and $M'$ are free $R$-modules, we have the following exact sequences by taking tensor with $R/m$. 
$$0\to Tor_1(Im(h), R/m)\to Ker(h)\otimes R/m\to M\otimes R/m\to Im(h)\otimes R/m\to 0$$
and
$$0\to Tor_1(Coker(h), R/m)\to Im(h)\otimes R/m\to M'\otimes R/m\to Coker(h)\otimes R/m\to 0$$
where all the tensor and $Tor$ are over $R$. 

Notice that the morphism $h\otimes \id_{R/m}: M\otimes_R R/m\to M'\otimes_R R/m$ factors as $M\otimes_R R/m\to Im(h)\otimes R/m\to M'\otimes R/m$. Since $h\otimes_R \id_{R/m}$ is injective and since $M\otimes_R R/m\to Im(h)\otimes R/m$ is obviously surjective, $Im(h)\otimes R/m\to M'\otimes R/m$ must be injective. Therefore, we have the vanishing $Tor_1(Coker(h), R/m)=0$. Over a noetherian local ring, this means $Coker(h)$ is free. By short exact sequences (\ref{ses2}) and (\ref{ses1}), we can conclude $Im(h)$ and $Ker(h)$ are both free. Thus, $M$ splits as a direct sum of free $R$-modules $Ker(h)$ and $Im(h)$. Now, since $h\otimes \id_{R/m}: M\otimes_R R/m\to M'\otimes_R R/m$ is injective, clearly $Ker(h)=0$. 
\end{proof}

\begin{rmk}
The assertion of Proposition \ref{propQequiv} is not necessarily true only with the assumption that $E^\ubul$ and $E'^\ubul$ are $q$-equivalent. This can be seen by taking a zero-complex and a free resolution of a non-free $R$-module.
\end{rmk}

\section{Cohomology jump loci of DGLA pairs}\label{secDGLA}
In this section we recall the definition of the deformation functor of a DGLA, we define DGLA pairs and their cohomology jump functors, and we prove Theorem \ref{independence} on the invariance of the cohomology jump functors under a change of the DGLA pair.

Firstly, recall the definition of a DGLA over $\mathbb{C}$, for example from \cite{gm}:

\begin{defn}
A DGLA consists of the following set of data,
\begin{enumerate}
\item a graded vector space $C=\bigoplus_{i\in \mathbb{N}}C^i$ over $\mathbb{C}$,
\item a Lie bracket which is bilinear, graded skew-commutative, and satisfies the graded Jacobi identity, i.e., for any $\alpha\in C^i, \beta\in C^j$ and $\gamma\in C^k$,
$$[\alpha, \beta]+(-1)^{ij}[\beta, \alpha]=0$$
and
$$(-1)^{ki}[\alpha, [\beta, \gamma]]+(-1)^{ij}[\beta, [\gamma, \alpha]]+(-1)^{jk}[\gamma, [\alpha, \beta]]=0$$
\item a family of linear maps, called the differential maps, $d^i: C^i\to C^{i+1}$, satisfying $d^{i+1}d^i=0$ and the Leibniz rule, i.e., for $\alpha\in C^i$ and $\beta\in C$
$$d[\alpha, \beta]=[d\alpha, \beta]+(-1)^i[\alpha, d\beta]$$
where $d=\sum d^i: C\to C$. 
\end{enumerate}
A homomorphism of DGLAs is a linear map which preserves the grading, Lie bracket, and the differential maps. 
\end{defn}
We denote this DGLA by $(C, d)$, or $C$ when there is no risk of confusion. 

\begin{defn}\label{module}
Given a DGLA $(C, d_C)$, we define a \textbf{module} over $(C, d_C)$ to be the following set of data,
\begin{enumerate}
\item a graded vector space $M=\bigoplus_{i\in \mathbb{N}} M^i$ together with a bilinear multiplication map $C\times M\to M$, $(a, \xi)\mapsto a\xi$, such that for any $\alpha\in C^i$ and $\xi\in M^j$, $\alpha\xi \in M^{i+j}$. And furthermore, for any $\alpha\in C^i, \beta\in C^j$ and $\zeta\in M$, we require
$$[\alpha, \beta]\zeta=\alpha(\beta\zeta)-(-1)^{ij}\beta(\alpha\zeta).$$
\item a family of linear maps $d^i_M: M^i\to M^{i+1}$ (write $d_M=\sum_{i\in\zz} d^i_M: M\to M$), satisfying $d^{i+1}_M d^i_M=0$. And we require it to be compatible with the differential on $C$, i.e., for any $\alpha\in C^i$, 
$$d_M(\alpha\xi)=(d_C\alpha)\xi+(-1)^i\alpha(d_M\xi).$$
\end{enumerate}
\end{defn}

We will call such a module by a $(C, d_C)$-module or simply a $C$-module. 

\begin{defn}
A \textbf{homomorphism} of $(C, d_C)$-modules $f: (M, d_M)\to (N, d_N)$ is a linear map $f: M\to N$ which satisfies
\begin{enumerate}
\item $f$ preserves the grading, i.e., $f(M^i)\subset N^i$,
\item $f$ is compatible with multiplication by elements in $C$, i.e., $f(\alpha\xi)=\alpha f(\xi)$, for any $\alpha\in C$ and $\xi \in M$,
\item $f$ is compatible with the differentials, i.e., $f(d_M\alpha)=d_N f(\alpha)$.
\end{enumerate}
\end{defn}

Fixing a DGLA $(C, d_C)$, the category of $C$-modules is an abelian category. 

\begin{defn}\label{defnHot}
A \textbf{DGLA pair} is a DGLA $(C, d_C)$ together with a $(C, d_C)$-module $(M, d_M)$. Usually, we write such a pair simply by $(C, M)$.  A homomorphism of DGLA pairs $g: (C, M)\to (D, N)$ consists of a map $g_1: C\to D$ of DGLA and a $C$-module homomorphism $g_2: M\to N$, considering $N$ as a $C$-module induced by $g_1$. For $q\in \nn\cup \{\infty\}$, we call $g$ a $q$-\textbf{equivalence} if $g_1$ is 1-equivalent and $g_2$ is $q$-equivalent. Moreover, we define two DGLA pairs to be of the same $q$-\textbf{homotopy type}, if they can be connected by a zig-zag of $q$-equivalences. Two DGLA pairs have the same {\textbf{homotopy type}} if they have the same $\infty$-homotopy type.
\end{defn}

\begin{defn} Let $(C, M)$ be a DGLA pair. Then $(H^\ubul(C),0)$, the cohomology of $C$ with zero differentials, is a DGLA, and $(H^\ubul(M),0)$, the cohomology of $M$ with zero differentials, is an $H^\ubul(C)$-module. We call the DGLA pair $(H^\ubul(C), H^\ubul(M))$ the \textbf{cohomology DGLA pair} of $(C, M)$. 
\end{defn}

\begin{assumption} From now, for a DGLA pair $(C,M)$ we always assume that $M$ is bounded above as a complex and $H^j(M)$ is a finite dimensional $\cc$-vector space for every $j\in \zz$.
\end{assumption}

\begin{defn}
We say the DGLA pair $(C, M)$ is $q$-\textbf{formal} if $(C, M)$ is of the same $q$-homotopy type as $(H^\ubul(C), H^\ubul(M))$. A pair is {\bf formal} is it is $\infty$-formal.
\end{defn}

Given a DGLA pair, we can abstractly define the space of flat connections and the cohomology jump loci as functors from $\Art$ to $\Set$. We will be mainly interested in the case when these functors are prorepresentable. 

Given a DGLA $(C, d)$ over $\mathbb{C}$ together with an Artinian local algebra $A$, a groupoid $\mathcal{C}(C, A)$ is defined in \cite{gm}. We recall this definition. $C\otimes_{\cc}A$ is naturally a DGLA by letting $[\alpha\otimes a, \beta\otimes b]=[\alpha, \beta]\otimes ab$ and $d(\alpha\otimes a)=d\alpha \otimes a$. Let $m$ be the maximal ideal in $A$. Then under the same formula, $C\otimes_{\cc} m$ is also a DGLA. Since $(C\otimes_{\cc}m)^0=C^0\otimes_{\cc}m$ is a nilpotent Lie algebra, the Campbell-Hausdorff multiplication defines a nilpotent Lie group structure on the space $C^0\otimes m$. We denote this Lie group by $\exp(C^0\otimes m)$. Now, a element $\lambda\in C^0\otimes m$ acts on $C^1\otimes m$ by
$$\overline\exp(\lambda): \alpha \mapsto \exp(\ad \lambda)\alpha+\frac{1-\exp(\ad \lambda)}{\ad \lambda}(d\lambda)$$
in terms of power series. This is a group action for the group $\exp(C^0\otimes m)$ on $C^1\otimes m$.

\begin{defn}\label{cat}
Category $\sC(C; A)$ is defined to be the category with objects
$$\obj \sC(C; A)=\{\omega\in C^1\otimes_{\cc}m\;|\; d\omega+\frac{1}{2}[\omega, \omega]=0\},$$
and with the morphisms between two elements $\omega_1$, $\omega_2$
$$\morph(\omega_1, \omega_2)=\{\lambda\in C^0\otimes m\;|\; \overline\exp(\lambda)\omega_1=\omega_2\}.$$
Define the \textbf{deformation functor} to be the functor 
$$
\Def(C): A\mapsto \iso\sC(C; A)
$$
from $\Art$ to $\Set$. Here we denote the set of isomorphism classes of a category by $\iso$. 
\end{defn}

\begin{defn}
Given any $\omega\in \obj\sC(C; A)$ and a $C$-module $M$, we can associate an {\bf Aomoto complex} to it: 
\begin{equation}\label{eqAo}
(M\otimes_\bC A, d_\omega)
\end{equation}
 with  $$d_\omega:=d\otimes \id_A+\omega.$$ The condition $d\omega+\frac{1}{2}[\omega, \omega]=0$ implies $d_\omega\circ d_\omega=0$. 
\end{defn}

\begin{lemma}\label{lemFGCoh} $(M\otimes_\bC A,d_\omega)$ has finitely generated cohomology over $A$.
\end{lemma}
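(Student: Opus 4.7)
The plan is to exploit the fact that $A$ is a finite-dimensional local $\bC$-algebra, so its maximal ideal $m$ satisfies $m^N = 0$ for some $N$. I would filter the Aomoto complex by powers of $m$, setting $F^k := M \otimes_\bC m^k$, giving a finite decreasing filtration
$$M \otimes_\bC A = F^0 \supset F^1 \supset \cdots \supset F^N = 0$$
by subcomplexes.

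The key point is that $d_\omega = d \otimes \id_A + \omega$ preserves this filtration: the term $d \otimes \id_A$ preserves each $F^k$, while $\omega \in C^1 \otimes_\bC m$ strictly raises the filtration level, sending $F^k$ into $F^{k+1}$. Therefore, on the associated graded piece $F^k/F^{k+1} \cong M \otimes_\bC (m^k/m^{k+1})$, the contribution of $\omega$ vanishes and the induced differential is simply $d_M \otimes \id$. Since $m^k/m^{k+1}$ is a finite-dimensional $\bC$-vector space, hence $\bC$-flat, the cohomology of the graded piece in degree $j$ is $H^j(M) \otimes_\bC (m^k/m^{k+1})$, which by the standing assumption on $(C, M)$ is finite-dimensional over $\bC$. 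In particular, each such $A$-module is finitely generated.

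Finally, I would run a descending induction on $k$ using the short exact sequence of complexes
$$0 \to F^{k+1} \to F^k \to F^k/F^{k+1} \to 0$$
and the resulting long exact sequence in cohomology. The base case $H^\ubul(F^N) = 0$ is trivial. At each step, $H^j(F^k)$ sits in a short exact sequence whose outer terms are a quotient of $H^j(F^{k+1})$ and a submodule of $H^j(F^k/F^{k+1})$; since $A$ is Noetherian and both of these modules are finitely generated over $A$ by the inductive hypothesis and the previous paragraph, so is $H^j(F^k)$. Taking $k = 0$ yields the claim. The argument is essentially formal once the filtration is set up; the only thing worth checking carefully is that $\omega$-multiplication strictly raises the filtration, which is immediate from $\omega \in C^1 \otimes_\bC m$ together with $m \cdot m^k \subset m^{k+1}$.
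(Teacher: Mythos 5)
Your proof is correct and is essentially the paper's own argument: both use the finite decreasing filtration $M\otimes_\bC m^k$ of $M\otimes_\bC A$, observe that $\omega$ strictly raises the filtration level so the induced differential on each graded piece $M\otimes_\bC m^k/m^{k+1}$ is $d_M\otimes\id$, and conclude from the finite-dimensionality of $H^\ubul(M)$. The only cosmetic difference is that you unpack the resulting spectral sequence (which the paper invokes directly) into an explicit descending induction with long exact sequences, which amounts to the same bookkeeping.
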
 
\begin{proof}
The finite decreasing filtration $M\otimes_\bC m^s$ of $M\otimes_\bC A$ is compatible with $d_\omega$. Consider the associated spectral sequence
$$
E_1^{s,t}=H^{s+t}(M\otimes_\bC m^s/m^{s+1},d_\omega)\Rightarrow H^{s+t}(M\otimes_\bC A,d_\omega),
$$
which degenerates after finitely many pages. It is enough to show that $E_1^{s,t}$ are finitely generated. However, this follows from the fact that $d_\omega=d\otimes id_A$ on $M\otimes m^s/m^{s+1}$, together with our assumption that $(M,d)$ has finitely generated cohomology.
\end{proof}
 
\begin{prop}\label{propexp}
Given any $\lambda\in C^0\otimes m$, the morphism $\overline\exp(\lambda): \omega_1\to \omega_2$ in $\sC(C; A)$ induces functorially a morphism between complexes $(M\otimes A, d_{\omega_1})\to (M\otimes A, d_{\omega_2})$. 
\end{prop}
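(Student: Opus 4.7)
The plan is to write down an explicit morphism of complexes and verify that it works by a direct gauge-theoretic calculation. For $\lambda \in C^0 \otimes m$, let $L_\lambda : M \otimes A \to M \otimes A$ denote the $A$-linear operator given by left multiplication via the module structure (extended $A$-linearly). Since $\lambda$ lies in $C^0 \otimes m$ and $m$ is a nilpotent ideal, the series $\Phi(\lambda) := \exp(L_\lambda) = \sum_{n \geq 0} \tfrac{1}{n!} L_\lambda^n$ terminates, defining a grading-preserving $A$-linear automorphism of $M \otimes A$. I will claim this $\Phi(\lambda)$ is the desired morphism of complexes $(M\otimes A, d_{\omega_1}) \to (M\otimes A, d_{\omega_2})$.

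Before showing $\Phi(\lambda)$ intertwines the differentials, I would record two easy identities coming from the axioms of Definition \ref{module}. First, from the compatibility of the bracket with module multiplication, for any $\alpha \in C$ we have $[L_\lambda, L_\alpha] = L_{[\lambda,\alpha]}$ as operators on $M \otimes A$ (the sign $(-1)^{0 \cdot j}$ is trivial since $\deg \lambda = 0$). In particular $L$ restricted to $C^0$ is a Lie algebra homomorphism into $\mathrm{End}(M \otimes A)$. Second, from the Leibniz rule in Definition \ref{module}(2), $[d_M, L_\lambda] = L_{d_C \lambda}$.

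Now I would invoke the standard exponential-conjugation formula $e^{-X} d_M e^X = d_M + \tfrac{1 - e^{-\mathrm{ad}\, X}}{\mathrm{ad}\, X}[d_M, X]$, obtained by differentiating $t \mapsto e^{-tX} d_M e^{tX}$. Applied with $X = L_\lambda$ and combined with the two identities above, this gives
\begin{equation*}
\Phi(\lambda)^{-1} d_M \Phi(\lambda) \;=\; d_M + L_{\frac{1 - e^{-\mathrm{ad}\,\lambda}}{\mathrm{ad}\,\lambda}(d_C \lambda)}.
\end{equation*}
A similar computation yields $\Phi(\lambda)^{-1} L_{\omega_2} \Phi(\lambda) = L_{e^{-\mathrm{ad}\,\lambda}\omega_2}$. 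Adding these and imposing $\Phi(\lambda)^{-1} d_{\omega_2} \Phi(\lambda) = d_{\omega_1}$ reduces to the identity $\omega_2 = e^{\mathrm{ad}\,\lambda}\omega_1 + \tfrac{1 - e^{\mathrm{ad}\,\lambda}}{\mathrm{ad}\,\lambda}(d_C \lambda)$, which is exactly the definition of $\omega_2 = \overline{\exp}(\lambda)\omega_1$. Hence $d_{\omega_2} \circ \Phi(\lambda) = \Phi(\lambda) \circ d_{\omega_1}$, so $\Phi(\lambda)$ is a morphism of complexes.

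Finally, functoriality follows from $\Phi(0) = \mathrm{id}$ and $\Phi(\mu) \circ \Phi(\lambda) = \Phi(\mu \ast \lambda)$, where $\ast$ is the Campbell--Hausdorff product: since $L$ is a Lie algebra homomorphism on $C^0$, the Baker--Campbell--Hausdorff formula for operator exponentials gives $e^{L_\mu} e^{L_\lambda} = e^{L_{\mu \ast \lambda}}$, matching the group law on $\exp(C^0 \otimes m)$ used to define composition in $\sC(C;A)$. The main thing to watch is the sign and ordering in the conjugation formula and that the module axioms are enough to mimic the Lie-algebra identities used in the classical gauge calculation; once those two identities in the second paragraph are in place, the rest of the argument is essentially the standard computation for flat connections.
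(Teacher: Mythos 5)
Your proposal is correct, and it constructs exactly the same chain map as the paper --- the exponential $\Phi(\lambda)=\exp(L_\lambda)$ of left multiplication by $\lambda$ on $M\otimes A$ --- but it verifies the intertwining property by a genuinely different method. The paper reduces the commutativity of the relevant square to two explicit identities, namely $\exp(\lambda)(\omega_1\xi)=(\exp(\ad \lambda)\omega_1)\exp(\lambda)\xi$ and $\exp(\lambda)d\xi=d(\exp(\lambda)\xi)+\bigl(\tfrac{1-\exp(\ad \lambda)}{\ad \lambda}d\lambda\bigr)\exp(\lambda)\xi$, and proves them by brute-force power-series expansion: comparing coefficients of $\lambda^p\omega_1\lambda^q\xi$ and $\lambda^p(d\lambda)\lambda^q\xi$, which requires binomial identities and the evaluation of $\int_0^1(1-t)^qt^p\,dt=\tfrac{p!q!}{(p+q+1)!}$. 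You instead package the module axioms as the operator identities $[L_\lambda,L_\alpha]=L_{[\lambda,\alpha]}$ and $[d_M,L_\lambda]=L_{d_C\lambda}$ (both sign-free precisely because $\deg\lambda=0$), and then invoke the standard conjugation formulas $e^{-X}Ye^{X}=e^{-\ad X}(Y)$ and $e^{-X}d_M e^{X}=d_M+\tfrac{1-e^{-\ad X}}{\ad X}[d_M,X]$, which are legitimate polynomial identities here because $L_\lambda$ and $\ad\lambda$ are nilpotent ($m$ being nilpotent in the Artinian algebra $A$). Your two conjugation formulas are exactly equivalent to the paper's two lemma equations (up to replacing $\lambda$ by $-\lambda$), so the underlying content is the same; what your route buys is that all the combinatorics is absorbed into the once-and-for-all computation $\tfrac{d}{dt}e^{-tX}Ye^{tX}=e^{-tX}[Y,X]e^{tX}$, making the verification shorter and more conceptual, at the cost of importing the Lie-theoretic machinery that the paper's self-contained coefficient count avoids. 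A further small plus on your side: you check functoriality explicitly, via Baker--Campbell--Hausdorff and the fact that $\lambda\mapsto L_\lambda$ is a Lie algebra homomorphism on $C^0\otimes m$, so that $\Phi(\mu)\circ\Phi(\lambda)=\Phi(\mu\ast\lambda)$ matches composition of morphisms in $\sC(C;A)$; the paper leaves this compatibility implicit.
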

\begin{proof}
We need to show the commutativity of the following diagram. 
\[
\xymatrixcolsep{8pc}\xymatrix{
M^i\otimes A\ar[rr]^{d_{\omega_1}=d_M\otimes \id_A+\omega_1}\ar[d]^{\exp(\lambda)}&& M^{i+1}\otimes A\ar[d]^{\exp(\lambda)}\\
M^i\otimes A\ar[rr]^{d_{\omega_2}=d_M\otimes \id_A+\overline\exp(\lambda)(\omega_1)}&&M^{i+1}\otimes A
}
\]
A direct computation reduces the commutativity to the following lemma. 
\end{proof}

\begin{lemma}
Under the above notations, for any $\xi\in M\otimes A$, the following equations hold.

\begin{equation}\label{eqLL}
\exp(\lambda)(\omega_1\xi)=(\exp(\ad \lambda)\omega_1)\exp(\lambda)\xi
\end{equation}
\begin{equation}\label{eqLLL}
\exp(\lambda)d\xi=d(\exp(\lambda)\xi)+\left(\frac{1-\exp(\ad \lambda)}{\ad \lambda}d\lambda\right)\exp(\lambda)\xi
\end{equation}
\end{lemma}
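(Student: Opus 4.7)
The plan is to reduce both identities to the standard operator-theoretic facts that arise when one exponentiates an inner derivation. Let $L\colon M\otimes A\to M\otimes A$ denote left multiplication by $\lambda$, and, for $\alpha\in C$, let $m_\alpha\colon M\otimes A\to M\otimes A$ denote left multiplication by $\alpha$. Since $\lambda$ has degree $0$, the Leibniz rule and the module axiom both take their sign-free forms: the module axiom of Definition \ref{module} gives
\[
[L,m_\alpha]=m_\alpha\circ L - L\circ m_\alpha \text{ (up to the sign convention)},
\]
and unwinding the identity $[\lambda,\alpha]\xi=\lambda(\alpha\xi)-\alpha(\lambda\xi)$ yields the key commutation relation $[L,m_\alpha]=m_{[\lambda,\alpha]}=m_{\ad\lambda(\alpha)}$.

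First I would prove (\ref{eqLL}). Iterating the commutation relation gives $(\ad L)^n(m_\alpha)=m_{(\ad\lambda)^n\alpha}$ for all $n\geq 0$ by induction. Because $\lambda\in C^0\otimes m$ is nilpotent (as $m$ is the nilpotent maximal ideal of $A$), both power series $\exp(L)$ and $\exp(\ad\lambda)$ are finite sums, so we may exponentiate term-by-term to obtain
\[
e^{\ad L}(m_\alpha)=m_{e^{\ad\lambda}\alpha},
\]
which is the usual identity $e^{L}\,m_\alpha\,e^{-L}=m_{e^{\ad\lambda}\alpha}$. Applied to $\alpha=\omega_1$ and evaluated at $\xi$, this is precisely (\ref{eqLL}).

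Next, for (\ref{eqLLL}) I would consider the one-parameter family $f(t):=e^{tL}\,d\,e^{-tL}$ of operators on $M\otimes A$ (with $t$ a formal parameter, which is legitimate because everything is nilpotent). The Leibniz rule from Definition \ref{module}, applied to $\lambda$ of degree $0$, gives $d\circ L-L\circ d=m_{d\lambda}$, i.e.\ $[d,L]=m_{d\lambda}$. Differentiating $f(t)$ and using this commutator yields
\[
f'(t)=e^{tL}[L,d]\,e^{-tL}=-e^{tL}\,m_{d\lambda}\,e^{-tL}=-m_{e^{t\ad\lambda}(d\lambda)},
\]
where the last equality applies the already-proved identity (\ref{eqLL}) with $\lambda$ replaced by $t\lambda$ and $\omega_1$ replaced by $d\lambda$. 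Integrating from $0$ to $1$ and noting that $\int_0^1 e^{t\,\ad\lambda}\,dt=\frac{e^{\ad\lambda}-1}{\ad\lambda}$ gives
\[
e^{L}\,d\,e^{-L}=d-m_{\frac{e^{\ad\lambda}-1}{\ad\lambda}(d\lambda)},
\]
and rearranging, then evaluating at $e^{-L}$ applied to $e^L\xi$, produces (\ref{eqLLL}).

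The only subtlety is bookkeeping of signs in step one; the hardest step is verifying that $[d,L]=m_{d\lambda}$ with the correct sign, but this is immediate from the Leibniz rule because $\lambda$ lies in degree zero. Nilpotency of $\lambda$ in $C^0\otimes m$ makes all of these formal power series into finite sums, so no convergence issues arise, and the calculation is a formal identity in the associative algebra of $\cc$-linear endomorphisms of $M\otimes A$.
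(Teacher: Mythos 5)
Your proof is correct, and it takes a genuinely different route from the paper's. The paper proves both identities by brute-force power series expansion: for (\ref{eqLL}) it compares coefficients of $\lambda^p\omega_1\lambda^q\xi$ on both sides and reduces to an alternating binomial sum, and for (\ref{eqLLL}) it compares coefficients of $\lambda^p(d\lambda)\lambda^q\xi$ and evaluates the resulting combinatorial identity via the Beta integral $\int_0^1(1-t)^qt^p\,dt=\frac{p!q!}{(p+q+1)!}$. You instead work at the level of operators on $M\otimes A$: the module axiom for the degree-zero element $\lambda$ gives the sign-free commutation relation $[L,m_\alpha]=m_{\ad\lambda(\alpha)}$, which you iterate and exponentiate to get $e^{L}m_{\omega_1}e^{-L}=m_{\exp(\ad\lambda)\omega_1}$ (this is (\ref{eqLL})), and then the Leibniz rule $[d,L]=m_{d\lambda}$ feeds into the standard derivative-of-conjugation argument $f(t)=e^{tL}de^{-tL}$, whose integration produces the factor $\frac{1-\exp(\ad\lambda)}{\ad\lambda}$ directly. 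Your approach buys conceptual clarity: the mysterious combinatorics of the paper's proof (in particular its Beta integral) is revealed as the shadow of $\int_0^1 e^{t\,\ad\lambda}\,dt=\frac{e^{\ad\lambda}-1}{\ad\lambda}$, and all nilpotency hypotheses are isolated up front so that every series is a finite sum. The paper's computation buys self-containedness — no operator formalism, just coefficients — at the cost of obscuring why the formulas hold. One cosmetic slip: your first displayed formula writes the commutator $[L,m_\alpha]$ with its two terms in the wrong order, but the relation you actually use in the sequel, $[L,m_\alpha]=m_{[\lambda,\alpha]}$ with $[L,m_\alpha]=L\circ m_\alpha-m_\alpha\circ L$, is the correct one, so nothing downstream is affected.
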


\begin{proof}[Proof of Lemma] The equation (\ref{eqLL}) is equivalent with the usual relation $e^\lam\circ\omega_1\circ e^{-\lam}=e^{[\lam,-]}\omega_1$. Let us recall the proof. We expand the right side of (\ref{eqLL}) and calculate the coefficient of term $\lambda^p \omega_1 \lambda^q \xi$. It is equal to 
\begin{align*}
\sum_{i=0}^q(-1)^{q-i}\,\frac{1}{i!}\frac{1}{(p+q-i)!}{p+q-i\choose p}&=\sum_{i=0}^q(-1)^{q-i}\,\frac{1}{i!p!(q-i)!}\\
&=(-1)^q\frac{1}{p!q!}\sum_{i=0}^q\left((-1)^{i}{p \choose i}\right)
\end{align*}
The last sum is zero unless $q=0$, and in this case, the coefficient is $\frac{1}{p!}$. This is exactly the coefficient of $\lambda^p\omega_1\xi$ on the left side of the equation. 

To show (\ref{eqLLL}), by comparing the coefficients of the term $\lambda^p(d\lambda)\lambda^q\xi$, we are lead to show the following equality,
$$\frac{1}{(p+q+1)!}=\sum_{i=0}^{q}\frac{(-1)^{p-i}}{i! (p+q-i+1)!} {p+q-i\choose p}$$ 
and this is equivalent to
$$\frac{p!q!}{(p+q+1)!}=\sum_{i=0}^q\frac{(-1)^{q-i}}{p+1+q-i}{q\choose i}.$$
Now, the right side is equal to $\int_0^1(1-t)^qt^pdt$. And by induction on $q$, we can easily show the integration is equal to $\frac{p!q!}{(p+q+1)!}$. 
\end{proof}

\begin{defn}\label{cohdef}
Let $(C,M)$ be a DGLA pair. We define $\sC^i_k(C, M; A)$ to be the full subcategory of $\sC(C; A)$ consisting the objects $\omega\in \obj\sC(C; A)$ such that $J^i_k(M\otimes_\cc A, d_\omega)=0$. This is well-defined since $(M\otimes_\cc A, d_\omega)$ is a bounded-above complex with finitely generated cohomology by Lemma \ref{lemFGCoh}.
\end{defn}

\begin{cor}
Under the notations of the previous definition, if $\sC^i_k(C, M; A)$ contains an object $\omega$ of $\sC(C; A)$, then $\sC^i_k(C, M; A)$ contains the isomorphism class of $\omega$ in $\sC(C; A)$. In other words, if $\omega\in \obj\sC^i_k(C, M; A)$, then $\overline\exp(\lambda)(\omega)\in \obj\sC^i_k(C, M; A)$ for any $\lambda\in\exp(C^0\otimes m)$. 
\end{cor}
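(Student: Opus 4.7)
The plan is to deduce the corollary directly from Proposition \ref{propexp} combined with Corollary \ref{idealindep}, since membership in $\sC^i_k(C,M;A)$ is defined in terms of the vanishing of the cohomology jump ideal $J^i_k(M\otimes_\cc A, d_\omega)$, and that ideal is a quasi-isomorphism invariant.

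First, I would invoke Proposition \ref{propexp} to obtain the morphism of complexes
$$\exp(\lambda): (M\otimes_\cc A, d_\omega) \longrightarrow (M\otimes_\cc A, d_{\overline\exp(\lambda)(\omega)}).$$
Next, I would observe that this morphism is an isomorphism of complexes: the underlying graded $A$-linear map $\exp(\lambda): M\otimes_\cc A \to M\otimes_\cc A$ is invertible with inverse $\exp(-\lambda)$ (this is a formal consequence of the Campbell--Hausdorff formula, using that $C^0\otimes m$ is a nilpotent Lie algebra so all exponential series are finite sums). Proposition \ref{propexp}, applied to both $\lambda$ and $-\lambda$, shows that both $\exp(\lambda)$ and $\exp(-\lambda)$ intertwine the two differentials, so $\exp(\lambda)$ is in particular an isomorphism in the category of complexes of $A$-modules.

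Now I would apply Corollary \ref{idealindep}: the two complexes $(M\otimes_\cc A, d_\omega)$ and $(M\otimes_\cc A, d_{\overline\exp(\lambda)(\omega)})$ are quasi-isomorphic (in fact isomorphic), so
$$J^i_k(M\otimes_\cc A, d_\omega) = J^i_k(M\otimes_\cc A, d_{\overline\exp(\lambda)(\omega)}).$$
Thus the vanishing of one ideal implies the vanishing of the other, which by Definition \ref{cohdef} is exactly the assertion that $\omega \in \obj\sC^i_k(C,M;A)$ if and only if $\overline\exp(\lambda)(\omega) \in \obj\sC^i_k(C,M;A)$.

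There is no real obstacle here: the content of the corollary has been entirely packaged into Proposition \ref{propexp} (which does the computational work of verifying that $\exp(\lambda)$ intertwines the twisted differentials) and into Corollary \ref{idealindep} (which records the quasi-isomorphism invariance of $J^i_k$). The only small point to check is the invertibility of $\exp(\lambda)$ at the level of graded modules, which is immediate from nilpotency of $C^0\otimes m$.
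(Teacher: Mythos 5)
Your proposal is correct and follows essentially the same route as the paper's own proof: invoke Proposition \ref{propexp}, note that $\exp(\lambda)$ is invertible (with inverse coming from $-\lambda$) so the two twisted complexes are isomorphic, and conclude by the quasi-isomorphism invariance of the cohomology jump ideals (Corollary \ref{idealindep}). The only difference is that you spell out the invertibility check that the paper leaves implicit, which is a harmless elaboration.
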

\begin{proof}
Since $\overline{\exp}(\lambda)$ has an inverse $\overline{\exp}(-\lambda)$, Proposition \ref{propexp} implies that $(M\otimes A, d_\omega)$ is isomorphic to $(M\otimes A, d_{\omega'})$, where $\omega'=\overline\exp(\lambda)(\omega)$. Thus, for any $\lambda\in \exp(C^0\otimes m)$, $\omega\in \obj\sC^i_k(C, M; A)$ is equivalent to $\overline\exp(\lambda)(\omega)\in \obj\sC^i_k(C, M; A)$. 
\end{proof}

\begin{lemma}
Let $(C, M)$ be a DGLA pair, and let $p: A\to A'$ be a local ring homomorphism of Artinian local algebras.  For any $\omega\in \obj\sC^i_k(C, M; A)$, the image of $\omega$ under $p_*: \sC(C; A)\to\sC(C; A')$ is contained in $\obj\sC^i_k(C, M; A')$. 
\end{lemma}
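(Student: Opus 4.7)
The plan is to deduce this directly from Corollary \ref{tensor} on base change of cohomology jump ideals for complexes of flat modules. Specifically, I would take the Noetherian ring to be $A$, the Noetherian $A$-algebra to be $A'$ (via the local homomorphism $p$), and the complex to be the Aomoto complex $(M\otimes_\cc A, d_\omega)$. Each term $M^j\otimes_\cc A$ is a free $A$-module because $M^j$ is a $\cc$-vector space, so Corollary \ref{tensor} applies.

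The first step is to identify the base-changed complex. Since tensor product is associative, $(M\otimes_\cc A)\otimes_A A'=M\otimes_\cc A'$ as graded vector spaces. Under this identification, the induced differential $d_\omega\otimes \id_{A'}$ equals $d\otimes \id_{A'}+p_*(\omega)=d_{p_*(\omega)}$, since $d_\omega=d\otimes \id_A+\omega$ and $p$ is a ring homomorphism (so $p_*$ commutes with the Lie-algebra action on $M$). Therefore the Aomoto complex at $p_*(\omega)$ is naturally isomorphic to the base change $(M\otimes_\cc A,d_\omega)\otimes_A A'$ as complexes of $A'$-modules.

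The second step is to apply Corollary \ref{tensor} to conclude
$$
J^i_k(M\otimes_\cc A',d_{p_*(\omega)})=J^i_k\bigl((M\otimes_\cc A,d_\omega)\otimes_A A'\bigr)=J^i_k(M\otimes_\cc A,d_\omega)\otimes_A A'.
$$
By hypothesis, $\omega\in \obj\sC^i_k(C,M;A)$ means $J^i_k(M\otimes_\cc A,d_\omega)=0$, so the right-hand side vanishes, giving $J^i_k(M\otimes_\cc A',d_{p_*(\omega)})=0$, i.e.\ $p_*(\omega)\in \obj\sC^i_k(C,M;A')$.

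There is no real obstacle here; the only thing worth double-checking is that Corollary \ref{tensor}'s hypotheses are met (flatness of each $M^j\otimes_\cc A$ over $A$, bounded-above complex, finitely generated cohomology over $A$), and that the differential transforms as claimed under base change. Flatness is automatic, boundedness is immediate from the standing assumption on $M$, and finite generation of cohomology was established in Lemma \ref{lemFGCoh}. Hence the lemma follows in one line from Corollary \ref{tensor}.
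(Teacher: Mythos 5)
Your proof is correct and follows essentially the same route as the paper: both identify the Aomoto complex at $p_*(\omega)$ with the base change $(M\otimes_\cc A, d_\omega)\otimes_A A'$ and then apply Corollary \ref{tensor}, using flatness of the terms $M^j\otimes_\cc A$ over $A$, to conclude that the cohomology jump ideal base-changes to zero. Your explicit verification that the differential transforms as $d_\omega\otimes\id_{A'}=d_{p_*(\omega)}$ is a detail the paper subsumes under ``by definition,'' but it is the same argument.
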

\begin{proof}
Denote by $\omega'$ the image of $\omega$ under $p_*$. Since $\omega\in \obj\sC^i_k(C, M; A)$, $J^i_k(M\otimes_\cc A, d_\omega)=0$. By definition, 
$$(M\otimes_\cc A', d_{\omega'})=(M\otimes_\cc A, d_{\omega})\otimes_A A'. $$
Since $(M\otimes_\cc A, d_{\omega})$ is a complex of flat $A$-modules, by Corollary \ref{tensor}
$$J^i_k((M\otimes_\cc A, d_{\omega})\otimes_A A')=J^i_k(M\otimes_\cc A, d_{\omega})\otimes_A A'$$
and hence
$$
J^i_k(M\otimes_\cc A', d_{\omega'})=J^i_k(M\otimes_\cc A, d_{\omega})\otimes_A A'=0.
$$
Therefore, $\omega'\in \obj\sC^i_k(C, M; A')$
\end{proof}

\begin{defn}\label{cohfunctor}
The \textbf{cohomology jump functor} associated to a DGLA pair $(C, M)$ is defined to be the functor 
$$\Def^i_k(C, M): A\mapsto \iso\sC^i_k(C, M; A)$$
from $\Art$ to $\Set$. By the previous lemma, $\Def^i_k(C, M)$ is a subfunctor of $\Def(C)$. 
\end{defn}

\begin{theorem} {\bf [= Theorem \ref{independence}.]}\label{independence2}
The cohomology jump functor $\Def^i_k(C, M)$ only depends on the $i$-homotopy type of $(C, M)$. More precisely, if a morphism of DGLA pairs $g: (C, M)\to (D, N)$ is an $i$-equivalence, then the induced transformation on functors $g_*: \Def^i_k(C, M)\to \Def^i_k(D, N)$ is an isomorphism. 
\end{theorem}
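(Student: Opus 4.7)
The plan is to reduce Theorem \ref{independence2} to the Goldman-Millson Theorem \ref{gm0} together with Proposition \ref{propQequiv} applied to the Aomoto complexes. Since $\Def^i_k(C,M) \subset \Def(C)$ and $\Def^i_k(D,N) \subset \Def(D)$ are subfunctors, and the 1-equivalence of $g_1$ already implies via Theorem \ref{gm0} that $g_{1*}: \Def(C) \to \Def(D)$ is an isomorphism on the ambient deformation functors, it will be enough to check that for every Artinian local $A$ with maximal ideal $m$ and every Maurer-Cartan element $\omega \in \obj\sC(C;A)$,
$$ \omega \in \obj\sC^i_k(C,M;A) \iff g_{1*}(\omega) \in \obj\sC^i_k(D,N;A). $$

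The central step will be to show that $g_2$ induces an $i$-equivalence of Aomoto complexes
$$ g_2\otimes\id_A : (M\otimes_\cc A, d_\omega) \longrightarrow (N\otimes_\cc A, d_{g_{1*}(\omega)}), $$
where $N\otimes A$ is regarded as a $C$-module via pullback along $g_1$. I would prove this by induction on the nilpotency index of $m$, filtering both sides by the $d$-stable subcomplexes $M\otimes m^s$ and $N\otimes m^s$ (stable because $\omega \in C^1\otimes m$ and $g_{1*}(\omega)\in D^1\otimes m$). On each graded piece the twist disappears, the differential reduces to $d_M\otimes\id$ (resp.\ $d_N\otimes\id$), and the induced map is simply $g_2$ tensored with the identity on $m^s/m^{s+1}$, which is an $i$-equivalence by hypothesis. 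The inductive step is a five-lemma chase on the long exact cohomology sequences attached to
$$ 0 \to M\otimes m^{n-1} \to M\otimes A \to M\otimes A/m^{n-1} \to 0, $$
and its counterpart for $N$; the left term is handled directly by the hypothesis on $g_2$ and the right term by induction.

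Once $g_2\otimes\id_A$ is known to be an $i$-equivalence, I would apply Mumford's lemma (Lemma \ref{lemM})---available because the Aomoto complexes have finitely generated cohomology over $A$ by Lemma \ref{lemFGCoh}---to replace them by finitely generated, bounded above free $A$-complexes $F^\ubul \to (M\otimes A, d_\omega)$ and $G^\ubul \to (N\otimes A, d_{g_{1*}(\omega)})$, together with a lift $h: F^\ubul\to G^\ubul$. Since $M\otimes A$ and $N\otimes A$ consist of flat $A$-modules, tensoring the resolutions with $A/m=\cc$ preserves the quasi-isomorphisms and identifies $h\otimes\id_\cc$, up to homotopy, with $g_2: M\to N$; hence both $h$ and $h\otimes\id_\cc$ are $i$-equivalences. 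Proposition \ref{propQequiv} then gives $J^i_k(F^\ubul) = J^i_k(G^\ubul)$, which by Definition-Proposition \ref{defprop} means $J^i_k(M\otimes A,d_\omega) = J^i_k(N\otimes A, d_{g_{1*}(\omega)})$. Combined with the Goldman-Millson bijection on isomorphism classes, this yields the desired natural isomorphism $g_*:\Def^i_k(C,M) \xrightarrow{\sim} \Def^i_k(D,N)$.

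The main obstacle I expect is propagating the $i$-equivalence from $\cc$ up to $A$ in the presence of the Maurer-Cartan twist $\omega$: a naive flatness argument only sees $d_M\otimes\id$ and misses the twist entirely. The filtration by powers of $m$ is what resolves this, because $\omega$ kills the associated graded and the hypothesis on $g_2$ becomes directly applicable there. Everything else---base-change of determinantal ideals, compatibility with minimal free resolutions, verifying the mod-$m$ hypothesis of Proposition \ref{propQequiv}---should be routine bookkeeping.
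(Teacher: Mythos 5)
Your proposal is correct and takes essentially the same approach as the paper: reduce to comparing the cohomology jump ideals of the two Aomoto complexes, establish the $i$-equivalence over $A$ via the filtration by powers of $m$ (your induction on the nilpotency index with the five lemma is the unrolled form of the paper's spectral-sequence comparison, both exploiting that the twist vanishes on the associated graded), and conclude with Proposition \ref{propQequiv}. The only deviation is a harmless redundancy: the detour through Mumford resolutions is unnecessary, because Proposition \ref{propQequiv} applies directly to the Aomoto complexes themselves, which are already bounded-above complexes of free $A$-modules with finitely generated cohomology, and its mod-$m$ hypothesis is precisely the assumed $i$-equivalence of $g_2$.
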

\begin{proof} Given Artinian local algebra $A$, we need to show the following two conditions are equivalent for any $\omega\in \obj\sC(C; A)$,
\begin{enumerate}
\item $\omega\in \obj\sC^i_k(C, M; A)$
\item $g_{1*}(\omega)\in \obj\sC^i_k(D, N; A)$,
\end{enumerate}
where $g=(g_1, g_2)$. According to Proposition \ref{propQequiv}, it is sufficient to show that the two complexes $(M\otimes_\bC A, d_\omega)$ and $(N\otimes_\bC A, d_{g_1(\omega)})$ are $i$-equivalent. Now this follows from the argument of \cite[Theorem 3.7]{dp}: our hypothesis on $H^\ubul (g_2)$ implies that the map between the $E_1$ terms of the spectral sequences of the two complexes formed as in the proof of Lemma \ref{lemFGCoh} is an isomorphism for $\cbul\le i$ and a monomorphism for $\cbul=i+1$, and this suffices. 
\end{proof}

\section{Resonance varieties of DGLA pairs}\label{secQR}

Let $(C,M)$ be a DGLA pair. In this section we consider a nice description of $\Def(C)$ and $\Def^i_k(C,M)$ in terms of the space of flat connections and the resonance varieties which can be defined  when $(C,M)$ satisfies some finitess conditions.

\begin{defn} The {\bf space of flat connections} of $C$ is 
$$
\sF(C)=\obj\sC(C;\bC)=\{\omega\in C^1\mid d\omega+\frac{1}{2}[\om,\om]=0 \}.
$$
When $\dim C_1<\infty$, $\sF(C)$ is an affine scheme of finite type over $\bC$.
The space of flat connections of $H^\ubul(C)$ is called the {\bf quadratic cone} of $C$,  
$$\sQ(C)=\{\eta\in H^1(C)\mid [\eta,\eta]=0 \}.$$
Since by assumption $\dim H^1(C)<\infty$, $\sQ(C)$ is an affine scheme of finite type over $\bC$.
\end{defn} 

Suppose $[C^0, C^1]=0$, i.e., $[\alpha, \beta]=0$ for any $\alpha\in C^0$ and $\beta\in C^1$. Then by definition, the action of $C^0$ on $C^1$ via $\overline{\exp}$ is trivial. Thus, we have the following:

\begin{lemma}\label{lemF}
Let $C$ be a DGLA with $[C^0, C^1]=0$ and $\dim C^1<\infty$. Then $\Def(C)$ is prorepresented by $\sF(C)_{(0)}$.
\end{lemma}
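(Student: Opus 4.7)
The plan is to match $\Def(C)$ with the functor of points of $\sF(C)_{(0)}$ directly for every Artinian local $(A,m)$, using the finiteness of $\dim_\bC C^1$ to make $\sF(C)_{(0)}$ a bona fide formal germ and the hypothesis $[C^0,C^1]=0$ to collapse the gauge equivalences.

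First, since $\dim_\bC C^1 < \infty$, the Maurer-Cartan locus $\sF(C) = \{\omega \in C^1 \mid d\omega + \tfrac12[\omega,\omega]=0\}$ is a closed affine subscheme of $C^1$ of finite type over $\bC$, cut out by the quadratic equations with values in $C^2$. Its formal germ $\sF(C)_{(0)}$ at the origin corresponds to the completed local ring $\widehat{\sO}_{\sF(C),0}$, and for any Artinian local $(A,m)$ the set $\homo(\widehat{\sO}_{\sF(C),0}, A)$ is in natural bijection with the set of $A$-points of $\sF(C)$ supported at $0$, i.e.\ with the set of $\omega \in C^1 \otimes_\bC m$ satisfying the Maurer-Cartan equation. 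By Definition \ref{cat}, this is precisely $\obj\sC(C;A)$.

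On the other hand, $\Def(C)(A) = \iso\sC(C;A)$, the quotient of $\obj\sC(C;A)$ by the gauge action
$$
\overline{\exp}(\lambda)(\omega) = \exp(\ad\lambda)\omega + \frac{1-\exp(\ad\lambda)}{\ad\lambda}(d\lambda), \qquad \lambda \in C^0 \otimes m.
$$
The hypothesis $[C^0,C^1]=0$ makes $\ad\lambda$ annihilate every element of $C^1 \otimes m$, so the two power series reduce to their zeroth terms: the first yields $\omega$ and the second yields $-d\lambda$. The remark recorded in the paragraph preceding the lemma registers this reduction as the triviality of the $\overline{\exp}$-action on $\sC(C;A)$, so $\iso\sC(C;A) = \obj\sC(C;A)$. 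Combining the two identifications, which are manifestly natural in $A$, gives $\Def(C)(A) \cong \homo(\widehat{\sO}_{\sF(C),0}, A)$, which is the desired prorepresentation.

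The step requiring most care is the triviality of the gauge action on isomorphism classes: once the factor $\exp(\ad\lambda)\omega = \omega$ is noted, the residual translation $-d\lambda$ must be seen to be inert on Maurer-Cartan objects, which is exactly the content of the observation immediately above the lemma. Granted this, the remainder of the argument is just an unpacking of Definition \ref{cat} and of the formal-germ formalism set up in the Notation subsection.
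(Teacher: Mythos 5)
Your first step is fine, and so is your computation of the gauge action: with $[C^0,C^1]=0$ the operator $\ad\lambda$ kills $C^1\otimes m$, so for $\lambda\in C^0\otimes m$ and $\omega\in C^1\otimes m$ one gets $\overline\exp(\lambda)(\omega)=\omega-d\lambda$. But this is where the proof breaks: a translation is the identity only when the translation vector vanishes, and nothing in the hypotheses $[C^0,C^1]=0$, $\dim C^1<\infty$ forces $d\lambda=0$. Your closing paragraph asserts that the residual term $-d\lambda$ is ``inert on Maurer-Cartan objects'' and defers this to the observation preceding the lemma; that appeal is circular, since the observation in question \emph{is} the assertion that the $\overline\exp$-action is trivial, i.e.\ exactly what has to be proved, and it silently drops the same term $-d\lambda$ that your computation correctly produced. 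The claim is in fact false under the stated hypotheses alone: take $C^0=\bC\lambda_0$, $C^1=\bC x$, $C^{\ge 2}=0$, all brackets zero, and $d\lambda_0=x$. Then every $\omega\in C^1\otimes m$ satisfies the Maurer--Cartan equation, so $\sF(C)=C^1$ is a line and $\sF(C)_{(0)}(A)\cong m$; yet the gauge group translates by all of $d(C^0\otimes m)=C^1\otimes m$, so $\iso\sC(C;A)$ is a single point. Hence $\Def(C)\not\cong\sF(C)_{(0)}$ for this $C$, even though $[C^0,C^1]=0$ and $\dim C^1<\infty$.

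What is missing is the additional condition that $d$ vanishes on $C^0$ (equivalently, $d\lambda=0$ for all $\lambda\in C^0\otimes m$; taking $A=\bC[\epsilon]/(\epsilon^2)$ shows triviality of the action forces $d^0=0$). This condition holds in every place the paper uses the lemma: it is invoked, via Corollary \ref{corQ}, only for the cohomology DGLA $(H^\ubul(C),0)$, whose differential is zero by construction. In that situation $d\lambda=0$, the action of $\exp(C^0\otimes m)$ really is trivial, $\iso\sC(C;A)=\obj\sC(C;A)$, and your identification of $\obj\sC(C;A)$ with $\homo(\widehat{\sO}_{\sF(C),0},A)$ --- which is the part of your argument that correctly uses $\dim C^1<\infty$ --- completes the proof. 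So to make your write-up sound, either add the hypothesis $d|_{C^0}=0$ to the statement you prove, or verify it in the intended applications, rather than citing the remark above the lemma, which has the same gap.
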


\begin{cor}\label{corQ} Let $C$ be a DGLA with $[H^0(C), H^1(C)]=0$. If $C$ is 1-formal, then $\Def(C)$ is prorepresented by $\sQ(C)_{(0)}$.
\end{cor}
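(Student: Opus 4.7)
The plan is to combine Theorem \ref{gm0} with Lemma \ref{lemF}, transported to the cohomology DGLA $(H^\ubul(C),0)$ via 1-formality.

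First, I would unpack the hypothesis. By definition of 1-formality (specialized from Definition \ref{defnHot} to DGLAs by ignoring the module), there is a zig-zag of 1-equivalences connecting $C$ to $(H^\ubul(C),0)$, the cohomology regarded as a DGLA with zero differential. Applying Theorem \ref{gm0} to each step of this zig-zag yields an isomorphism of deformation functors
\[
\Def(C)\;\cong\;\Def(H^\ubul(C)).
\]

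Next I would apply Lemma \ref{lemF} to the DGLA $(H^\ubul(C),0)$. The hypothesis $[H^0(C),H^1(C)]=0$ is precisely the bracket vanishing required there, and the finiteness assumption $\dim H^1(C)<\infty$ is already built into the setup (it is stated in the definition of the quadratic cone, without which $\sQ(C)$ would not be a finite-type scheme). Lemma \ref{lemF} thus yields that $\Def(H^\ubul(C))$ is prorepresented by $\sF(H^\ubul(C))_{(0)}$.

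It remains to identify $\sF(H^\ubul(C))$ with $\sQ(C)$. Since the differential on $H^\ubul(C)$ is zero, the Maurer--Cartan equation simplifies to $\tfrac{1}{2}[\eta,\eta]=0$, i.e.\ $[\eta,\eta]=0$, so
\[
\sF(H^\ubul(C))=\{\eta\in H^1(C)\mid [\eta,\eta]=0\}=\sQ(C)
\]
by definition. Chaining the two prorepresentability statements gives the desired isomorphism $\Def(C)\cong \sQ(C)_{(0)}$. The argument involves no real obstacle beyond assembling these previous results; the only point worth flagging is that 1-formality (and not merely isomorphism of $H^0,H^1$) is what allows Theorem \ref{gm0} to move $\Def$ from $C$ to $(H^\ubul(C),0)$, and that the bracket condition $[H^0(C),H^1(C)]=0$ is precisely what makes the $\overline{\exp}$-action on $\sF(H^\ubul(C))$ trivial so that Lemma \ref{lemF} applies.
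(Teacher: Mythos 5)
Your proposal is correct and follows essentially the same route as the paper: the paper's proof likewise invokes Theorem \ref{gm0} to get $\Def(C)\cong\Def(H^\ubul(C))$ and then identifies the prorepresenting object $\sF(H^\ubul(C))_{(0)}=\sQ(C)_{(0)}$ via Lemma \ref{lemF}. Your only addition is to spell out the implicit details (the zig-zag unpacking of 1-formality and the reduction of Maurer--Cartan to $[\eta,\eta]=0$ when the differential vanishes), which the paper leaves to the definitions.
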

\begin{proof}
By Theorem \ref{gm0}, $\Def(C)$ is naturally isomorphic to $\Def(H^\ubul(C))$. The last functor is prorepresented by $\sF(H^\ubul(C))_{(0)}=\sQ(C)_{(0)}$.
\end{proof}

\begin{defn}\label{dglaresonance}
Let $(C, M)$ be a DGLA pair with  $\dim C^1<\infty$. There is a tautological section $\zeta=\zeta_{\sF(C)}$ of the sheaf $\sF(C)\otimes_\cc\mathcal{O}_{\sF(C)} $. Hence there is a universal complex on $\sF(C)$, 
$$(M\otimes_\cc \sO_{\sF(C)}, d_\zeta=d_M\otimes \id_{\sO_{\sF(C)}} +\zeta),$$
which interpolates point-wise all the complexes as in (\ref{eqAo}) with $A=\bC$.
Define the \textbf{resonance variety} $\sR^i_k(C,M)$  to be closed subscheme  of $\sF(C)$ of finite type over $\bC$ defined by the ideal $J^i_k(M\otimes_\cc \sO_{\sF(C)}, d_\zeta)$. This is well-defined as long as the complex $(M\otimes_\cc \sO_{\sF(C)}, d_\zeta)$ has finitely generated cohomology, so, in particular, when $M^i$ are finite-dimensional. The {\bf cohomology resonance variety} ${}^h\sR^i_k(C,M)=\sR^i_k(H^\ubul(C), H^\ubul(M))$ is always well-defined, and admits a presentation in terms of linear algebra: it is the subscheme of $\sQ(C)$ defined by the cohomology jump ideal $J^i_k$  of the complex $(H^\ubul(M)\otimes_\cc \sO_{\sQ(C)}, \zeta_{\sQ(C)})$, where $\zeta_{\sQ(C)}$ is the tautological section of $\sQ(C)\otimes_\cc\mathcal{O}_{\sQ(C)}$.
\end{defn}

By Corollary \ref{corFl}, we have:

\begin{lemma}\label{lemRR}
Set-theoretically, 
$$\sR^i_k(C,M)=\{\om\in \sF(C)\mid \dim H^i(M, d_\om=d_M+\omega)\geq k\} $$
when well-defined, and
$${}^h\sR^i_k(C,M)=\{\eta\in \sQ(C)\mid \dim H^i(H^\ubul(M), \eta)\geq k\}. $$
\end{lemma}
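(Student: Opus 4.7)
The plan is to read off both statements directly from Corollary \ref{corFl}, after verifying that the universal complexes involved satisfy the flatness hypothesis and that their specializations at a closed point give exactly the complexes appearing on the right-hand sides.

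First I would set up the identification of closed points of $\sF(C)$ with maximal ideals of $\sO_{\sF(C)}$. A point $\omega \in \sF(C)$ belongs to the closed subscheme $\sR^i_k(C,M)$ precisely when $J^i_k(M\otimes_\cc \sO_{\sF(C)}, d_\zeta) \subset m_\omega$, where $m_\omega$ is the maximal ideal of $\sO_{\sF(C)}$ at $\omega$. Now $M\otimes_\cc \sO_{\sF(C)}$ is a complex of free (hence flat) $\sO_{\sF(C)}$-modules in each degree, so Corollary \ref{corFl} applies and says this containment is equivalent to
$$\dim_\cc H^i\bigl((M\otimes_\cc \sO_{\sF(C)}, d_\zeta)\otimes_{\sO_{\sF(C)}} \cc_\omega\bigr) \geq k.$$
The next step is to identify this fiber complex with $(M, d_M + \omega)$: specializing the tautological section $\zeta$ at the point $\omega \in \sF(C)$ recovers $\omega \in C^1$ by construction, so $d_\zeta$ specializes to $d_M + \omega$, and the underlying graded vector space becomes $M$. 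This gives the first equality.

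For the second equality, I would apply exactly the same argument with $(C,M)$ replaced by $(H^\ubul(C), H^\ubul(M))$: then $\sF(H^\ubul(C)) = \sQ(C)$, the differential $d$ on $H^\ubul(M)$ is zero, and the tautological section $\zeta_{\sQ(C)}$ specializes at $\eta \in \sQ(C)$ to $\eta$ itself, so the fiber complex is $(H^\ubul(M), \eta\wedge\cdot)$.

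No step is really an obstacle; the only thing to be careful about is ensuring the hypotheses of Corollary \ref{corFl} are met (bounded-above complex of flat modules with finitely generated cohomology), which is why the statement requires $\dim C^1 < \infty$ together with the standing assumption on $(C,M)$ for $\sR^i_k$ to be well-defined in the first place, and why ${}^h\sR^i_k$ is always well-defined (since $\dim H^1(C) < \infty$ and each $H^i(M)$ is finite-dimensional).
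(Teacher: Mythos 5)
Your proposal is correct and follows exactly the paper's route: the paper deduces this lemma directly from Corollary \ref{corFl} (introducing it with "By Corollary \ref{corFl}, we have:"), which is precisely your argument, with the flatness of the free universal complex and the identification of its fiber at a closed point spelled out. The extra care you take with the well-definedness hypotheses matches the conventions set in Definition \ref{dglaresonance}, so nothing is missing.
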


By Lemma \ref{lemF} and the definitions:
\begin{lemma}
Let $(C, M)$ be a DGLA pair with $[C^0,C^1]=0$, $\dim C^1<\infty$, and $\dim M^i<\infty$ for $i\le q$ for some $q\ge 1$. Then $\Def^i_k(C,M)$ is prorepresented by $\sR^i_k(C,M)_{(0)}$ for $i\le q$.
\end{lemma}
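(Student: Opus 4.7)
The plan is to assemble two pieces: the prorepresentability of the unconstrained deformation functor from Lemma \ref{lemF}, and a base-change identity for cohomology jump ideals of a universal Aomoto complex. First I would invoke Lemma \ref{lemF}: since $[C^0,C^1]=0$ and $\dim C^1<\infty$, the $\overline{\exp}$-action of $C^0\otimes m$ on $C^1\otimes m$ is trivial, so $\Def(C)(A)=\obj\sC(C;A)$ is in natural bijection with $\homo_{\mathrm{loc}}(\widehat{\sO}_{\sF(C),0},A)$. Concretely, each Artin-valued flat connection $\omega$ corresponds to the classifying local homomorphism $\phi_\omega$ obtained by specializing the tautological section $\zeta=\zeta_{\sF(C)}$ to $\omega$.

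Next I would observe that the Aomoto complex $(M\otimes_\cc A,d_\omega)$ is the base change along $\phi_\omega$ of the universal Aomoto complex $(M\otimes_\cc\sO_{\sF(C)},d_\zeta)$: the differentials agree since $d_\omega=d_M\otimes\id_A+\omega$ is obtained from $d_\zeta=d_M\otimes\id_{\sO_{\sF(C)}}+\zeta$ by applying $-\otimes_{\sO_{\sF(C)}}A$ and using $\phi_\omega(\zeta)=\omega$. Because each $M^i$ is a $\cc$-vector space, $M\otimes_\cc\sO_{\sF(C)}$ is a complex of flat $\sO_{\sF(C)}$-modules, so Corollary \ref{tensor} yields
$$
J^i_k(M\otimes_\cc A,d_\omega)\;=\;\phi_\omega\bigl(J^i_k(M\otimes_\cc\sO_{\sF(C)},d_\zeta)\bigr)\cdot A,
$$
and by Definition \ref{dglaresonance} the ideal on the right is exactly the one cutting out $\sR^i_k(C,M)\subset\sF(C)$. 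Therefore $\omega\in\obj\sC^i_k(C,M;A)$ iff the left-hand ideal vanishes iff $\phi_\omega$ factors through the formal germ $\sR^i_k(C,M)_{(0)}$. This produces the desired natural isomorphism $\Def^i_k(C,M)\cong\sR^i_k(C,M)_{(0)}$ as subfunctors of $\Def(C)\cong\sF(C)_{(0)}$.

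The step I expect to require the most care is the verification that $J^i_k(M\otimes_\cc\sO_{\sF(C)},d_\zeta)$ is well-defined and that Corollary \ref{tensor} genuinely applies in the relevant range $i\le q$. The hypothesis $\dim M^j<\infty$ for $j\le q$, combined with the bounded-above assumption on $M$, ensures that the terms of $M\otimes_\cc\sO_{\sF(C)}$ in the degrees entering the formula $I_{\mathrm{rank}(F^i)-k+1}(d^{i-1}\oplus d^i)$ of Definition-Proposition \ref{defprop} are already finitely generated free $\sO_{\sF(C)}$-modules, so no auxiliary free resolution is needed and the base-change statement can be applied term by term. A secondary routine check is the passage between $\sO_{\sF(C),0}$ and its completion, which is harmless since completion is flat and determinantal ideals are preserved under flat extensions.
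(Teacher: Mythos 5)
Your proof is correct and coincides with the paper's own argument: the paper derives this lemma in one line from Lemma \ref{lemF} ``and the definitions,'' and your identification of $(M\otimes_\cc A, d_\omega)$ as the base change of the universal Aomoto complex $(M\otimes_\cc \sO_{\sF(C)}, d_\zeta)$ along the classifying map, combined with Corollary \ref{tensor}, is exactly what that unwinding amounts to. The only piece of fine print --- inherited from the paper's Definition \ref{dglaresonance} rather than introduced by you --- concerns the boundary case $i=q$: the determinantal formula for $J^q_k$ involves the degree-$(q+1)$ term, and $M^{q+1}$ is not assumed finite-dimensional, so your closing claim that every term entering the formula is finitely generated free is literally valid only for $i<q$; the paper glosses over the same point.
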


Hence, together with Corollary \ref{corQ} and by definitions: 
\begin{cor}\label{cohformal}
Let $(C, M)$ be a $q$-formal DGLA pair with $[H^0(C),H^1(C)]=0$, $q\ge 1$.
Then $\Def^i_k(C, M)$ is prorepresented by  ${}^h\sR^i_k(C,M)_{(0)}$ for $i\le q$.
\end{cor}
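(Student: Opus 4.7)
The plan is to combine the homotopy invariance Theorem \ref{independence2} with the prorepresentability result for ``free'' DGLA pairs established in the lemma immediately preceding Corollary \ref{cohformal}. First, I would unpack $q$-formality: by Definition \ref{defnHot}, there is a zig-zag of $q$-equivalences of DGLA pairs connecting $(C,M)$ to its cohomology pair $(H^\ubul(C),H^\ubul(M))$. Since any $q$-equivalence is automatically an $i$-equivalence for every $i\le q$ (and, as $q\ge 1$, every arrow is in particular $1$-equivalent on the DGLA part), Theorem \ref{independence2} applies to each arrow in the zig-zag in both directions. Concatenating the resulting natural isomorphisms gives
$$\Def^i_k(C,M)\;\cong\;\Def^i_k(H^\ubul(C),H^\ubul(M))\qquad\text{for every } i\le q.$$

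The second step is to identify the right-hand functor with ${}^h\sR^i_k(C,M)_{(0)}$. For this I would apply the preceding lemma directly to the cohomology DGLA pair $(H^\ubul(C),H^\ubul(M))$, viewed as a DGLA pair with zero differentials. Its three hypotheses are readily verified: the bracket condition $[H^\ubul(C)^0,H^\ubul(C)^1]=0$ is exactly $[H^0(C),H^1(C)]=0$, which is our hypothesis; the finiteness $\dim H^1(C)<\infty$ is contained in the standing assumption used in the definition of the quadratic cone; and the finiteness $\dim H^i(M)<\infty$ for $i\le q$ is the standing assumption on DGLA pairs. The lemma then produces a prorepresentation of $\Def^i_k(H^\ubul(C),H^\ubul(M))$ by the formal germ $\sR^i_k(H^\ubul(C),H^\ubul(M))_{(0)}$, which is literally the definition of ${}^h\sR^i_k(C,M)_{(0)}$.

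I do not anticipate any serious obstacle: the argument is almost entirely bookkeeping built on previously established results. The only point requiring mild care is that the intermediate DGLA pairs appearing in the zig-zag need not satisfy the bracket-vanishing or finiteness hypotheses of the corollary. This is harmless because Theorem \ref{independence2} is formulated without such hypotheses — they are invoked only at the terminal cohomology pair, where the preceding lemma is applied. Combining the two displayed isomorphisms yields the claimed prorepresentation.
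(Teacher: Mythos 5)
Your proof is correct and is essentially the paper's own argument: the paper derives the corollary by exactly this combination of Theorem \ref{independence} (applied arrow-by-arrow to the zig-zag of $q$-equivalences furnished by $q$-formality) and the prorepresentability lemma applied to the cohomology pair $(H^\ubul(C),H^\ubul(M))$, whose hypotheses hold by the assumption $[H^0(C),H^1(C)]=0$ and the standing finiteness assumptions. Your added remark that the intermediate pairs in the zig-zag need not satisfy those hypotheses is a correct and worthwhile clarification of a point the paper leaves implicit.
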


\section{Augmented DGLA pairs}\label{augment}
\begin{defn}
Let $C$ be a DGLA, and let $\mathfrak{g}$ be a Lie Algebra. We can regard $\mathfrak{g}$ as a DGLA concentrating on degree zero. An augmentation map is a DGLA map $\varepsilon: C\to \mathfrak{g}$. The augmentation ideal of $\varepsilon$ is defined to be the kernel of $\varepsilon$, which is clearly a DGLA too. Denote the augmentation ideal of $\varepsilon$ by $C_0$.  Moreover, suppose $M$ is a $C$-module. Then naturally, $M$ is also a $C_0$-module. Define the deformation functor of the augmented DGLA $(C; \varepsilon)$  by
$$
\Def(C; \varepsilon)\stackrel{\textrm{def}}{=}\Def(C_0),
$$
and the deformation functor of the augmented DGLA pair $(C, M;\varepsilon)$ by
$$
\Def(C, M; \varepsilon)\stackrel{\textrm{def}}{=}\Def(C_0, M).
$$
\end{defn}
\begin{theorem}\cite[Theorem 3.5]{gm}\label{gmaug}
Under the above notations, suppose $C$ is 1-formal, and suppose the degree zero part of $\varepsilon$, $\varepsilon^0: C^0\to \mathfrak{g}$ is surjective. Moreover, suppose the restriction of $\varepsilon^0$ to $H^0(C)$ is injective. Then $\Def(C; \varepsilon)$ is prorepresented by the formal scheme of $\sQ(C)\times \mathfrak{g}/\varepsilon(H^0(C))$ at the origin. 
\end{theorem}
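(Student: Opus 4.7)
The plan is to reduce the theorem to Corollary \ref{corQ} applied to the augmentation ideal $C_0$. Unpacking definitions, $\Def(C;\varepsilon) = \Def(C_0)$, so it suffices to show that $C_0$ is 1-formal, that $[H^0(C_0), H^1(C_0)] = 0$, and that the quadratic cone of $C_0$ is $\sQ(C) \times \mathfrak{g}/\varepsilon(H^0(C))$.

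First, I would compute the low-degree cohomology of $C_0$. The augmentation yields a short exact sequence of DGLAs
$$
0 \to C_0 \to C \xrightarrow{\varepsilon} \mathfrak{g} \to 0,
$$
exact on the right by surjectivity of $\varepsilon^0$ and trivially so in positive degrees (where $\mathfrak{g}$ vanishes). The associated long exact sequence together with the injectivity of $\varepsilon^0|_{H^0(C)}$ gives $H^0(C_0) = 0$ (so the bracket hypothesis in Corollary \ref{corQ} is automatic) and an extension
$$
0 \to \mathfrak{g}/\varepsilon(H^0(C)) \to H^1(C_0) \to H^1(C) \to 0,
$$
which splits noncanonically as a sequence of vector spaces, while $H^i(C_0) \cong H^i(C)$ for $i \ge 2$.

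Next, I would establish the 1-formality of $C_0$. Starting from a zig-zag of 1-equivalences $C \leftarrow L \to H^\ubul(C)$ witnessing the 1-formality of $C$, the composition $L \to C \xrightarrow{\varepsilon} \mathfrak{g}$ defines an augmentation on $L$ whose kernel $L_0$ maps to $C_0$. A diagram chase with the long exact sequence shows $L_0 \to C_0$ is a 1-equivalence; performing the parallel construction on $H^\ubul(C)$ gives a 1-equivalence of $L_0$ with a zero-differential model of $H^\ubul(C_0)$. This step is the main technical obstacle: to make the kernels behave, one must either replace $L$ by a 1-minimal model concentrated in non-negative degrees with $L^0 \twoheadrightarrow \mathfrak{g}$ surjective, or enlarge $L$ by a contractible piece contributing the extra $\mathfrak{g}/\varepsilon(H^0(C))$ to degree one, and then check compatibility at each step of the zig-zag.

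Finally, I would identify the quadratic cone $\sQ(C_0) \subset H^1(C_0)$ under the splitting $H^1(C_0) \cong H^1(C) \oplus \mathfrak{g}/\varepsilon(H^0(C))$. Writing $\eta = (\eta_1, \eta_2)$ with $\eta_1$ represented by a cycle $\alpha \in C_0^1$ and $\eta_2$ represented by $dc$ for some $c \in C^0$ with $\varepsilon(c) \in \mathfrak{g}$ a chosen lift, the Leibniz rule gives $[dc,dc] = -d[dc,c]$ and $[\alpha,dc] = -d[\alpha,c]$; and since $\varepsilon(\alpha) = 0$ and $\varepsilon(dc) = 0$, both chains $[dc,c]$ and $[\alpha,c]$ lie in $C_0^1$. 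Hence the mixed and pure-second-factor brackets vanish in $H^2(C_0)$, reducing $[\eta,\eta] = 0$ to $[\eta_1,\eta_1] = 0$ in $H^2(C)$. This identifies $\sQ(C_0) \cong \sQ(C) \times \mathfrak{g}/\varepsilon(H^0(C))$ as affine schemes, and Corollary \ref{corQ} applied to $C_0$ completes the proof.
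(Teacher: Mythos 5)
Your first and third steps are correct: the long exact sequence of $0 \to C_0 \to C \xrightarrow{\varepsilon} \mathfrak{g} \to 0$ does give $H^0(C_0)=0$, $H^i(C_0)\cong H^i(C)$ for $i\ge 2$, and the stated extension in degree one; and your bracket computation (using that $[\alpha,c]$ and $[dc,c']$ lie in $C_0^1$ because $\varepsilon$ kills one argument of each bracket) correctly identifies $\sQ(C_0)\cong \sQ(C)\times \mathfrak{g}/\varepsilon(H^0(C))$ scheme-theoretically. The genuine gap is your second step, the $1$-formality of $C_0$, which you yourself flag as ``the main technical obstacle'' and then only sketch. The sketch as written fails: in a roof $C \leftarrow L \to H^\ubul(C)$ you can pull the augmentation back to $L$, but on the other leg the only natural augmentation of $H^\ubul(C)$ is $\varepsilon^0$ restricted to $H^0(C)$, whose image is $\mathfrak{h}=\varepsilon(H^0(C))$ rather than $\mathfrak{g}$; its kernel therefore has $H^1$ equal to $H^1(C)$, missing the $\mathfrak{g}/\mathfrak{h}$ summand entirely, so $L_0 \to (H^\ubul(C))_0$ can never be a $1$-equivalence when $\mathfrak{h}\neq\mathfrak{g}$. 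Moreover, even on the left leg your five-lemma argument needs $\varepsilon^0_L\colon L^0\to \mathfrak{g}$ surjective, which a general zig-zag does not provide, and the augmentations on successive stages of a zig-zag need not be compatible at all --- this has to be arranged by construction, not ``checked''. Carrying out either of your proposed repairs (a $1$-minimal model with surjective degree-zero augmentation, or adjoining a contractible piece carrying $\mathfrak{g}/\mathfrak{h}$, compatibly along the whole zig-zag) is precisely the substance of the theorem, so as it stands the reduction to Corollary \ref{corQ} defers the real difficulty rather than resolving it.

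It is worth contrasting this with the route the paper takes: the statement is quoted from \cite[Theorem 3.5]{gm}, and the argument used there (recalled in the proof of Theorem \ref{mainaug}) never considers the homotopy type of the augmentation ideal. One works instead with the framed groupoid $\sC(C;A)\bowtie \exp(\mathfrak{g}\otimes m)$; homotopy invariance (\cite[Lemma 3.8]{gm}) is then a statement about $1$-equivalences of $C$ itself, so $1$-formality of $C$ suffices as hypothesized, and the factor $\mathfrak{g}/\varepsilon(H^0(C))$ arises not as cohomology of any kernel but from the quotient of $\exp(\mathfrak{g}\otimes m)$ by the free action of $\exp(H^0(C)\otimes m)$, freeness being exactly the injectivity hypothesis on $\varepsilon^0|_{H^0(C)}$. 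If you want to salvage your approach, you would need to prove an augmented (relative) formality statement for $(C,\varepsilon)$, which in effect amounts to redoing the Goldman--Millson groupoid argument.
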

We will generalize the theorem to DGLA pairs.
\begin{theorem}\label{mainaug}
Let $(C, M)$ be a DGLA pair, and let $\varepsilon: C\to \mathfrak{g}$ be an augmentation map. Suppose all the assumptions in the previous theorem hold, and moreover $(C, M)$ is $q$-formal, $q\ge 1$. Then for $i\le q$, $\Def^i_k(C, M;\varepsilon)$ is prorepresented by the formal scheme of ${}^h\sR^i_k(C,M)\times \mathfrak{g}/\varepsilon(H^0(C))$ at the origin. Furthermore, we have a commutative diagram of natural transformations of functors from $\Art$ to $\Set$,
\begin{equation}\label{comm}
\xymatrix{
\Def^i_k(C, M; \varepsilon)\ar[r]\ar[d]&({}^h\sR^i_k(C,M)\times \mathfrak{g}/\varepsilon(H^0(C)))_{(0)}\ar[d]\\
\Def(C; \varepsilon)\ar[r]&(\sQ(C)\times \mathfrak{g}/\varepsilon(H^0(C)))_{(0)}
}
\end{equation}
\end{theorem}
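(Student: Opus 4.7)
The plan is to mimic the Goldman--Millson construction underlying Theorem~\ref{gmaug}, carrying the module $M$ through their argument, and then to identify the resulting deformation data via Corollary~\ref{cohformal} applied to a formal model.

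Setting $V=\mathfrak{g}/\varepsilon(H^0(C))$ and $C_0=\ker\varepsilon$, the candidate model $(D,N)$ for the pair $(C_0,M)$ is the DGLA with zero differential defined by $D^0=0$, $D^1=H^1(C)\oplus V$, $D^i=H^i(C)$ for $i\ge 2$, equipped with bracket inherited from $H^\ubul(C)$ on $H^1(C)$ and $[V,V]=[V,H^1(C)]=0$, together with the module $N=H^\ubul(M)$ on which $H^\ubul(C)\subset D$ acts via the induced action and $V$ acts trivially. Using a vector-space splitting $s:\mathfrak{g}\to C^0$ of $\varepsilon^0$, the Leibniz rule yields the identities
\[
[ds(v_1),ds(v_2)]=-d[ds(v_1),s(v_2)],\qquad [ds(v),\tilde\alpha]=d[s(v),\tilde\alpha]
\]
for $\tilde\alpha\in Z^1(C)$, so $D\cong H^\ubul(C_0)$ as DGLAs; similarly the cocycle identity $ds(v)\cdot m=d(s(v)\cdot m)$ for $m\in M$ a cocycle forces the trivial action of $V$ on $H^\ubul(M)$, so $N$ agrees with the natural $H^\ubul(C_0)$-action on $H^\ubul(M)$.

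The heart of the proof is to establish a $q$-equivalence $(C_0,M)\sim(D,N)$ of DGLA pairs; note that this does \emph{not} follow merely by passing to augmentation ideals in the zigzag of $q$-formality $(C,M)\leftarrow(\tilde C,\tilde M)\to(H^\ubul(C),H^\ubul(M))$, because $H^\ubul(C)$ has no degree-$1$ summand reflecting $V$. Instead, one enlarges the formal model on the right by adjoining the $V$-piece so as to produce $(D,N)$ with its trivial augmentation $D\to\mathfrak{g}$ (trivial because $D^0=0$), and follows the argument behind Theorem~\ref{gmaug}: use the short exact sequence $0\to C_0\to C\to\mathfrak{g}\to 0$ and the splitting $s$ to realize $V$ by degree-$1$ cocycles in $C_0$, then combine with the $q$-formality zigzag and the hypotheses on $\varepsilon^0$ to produce a zigzag of $q$-equivalences of DGLA pairs
\[
(C_0,M)\longleftarrow(\tilde C',\tilde M')\longrightarrow(D,N).
\]
Once this is done, Theorem~\ref{independence} gives $\Def^i_k(C_0,M)\cong\Def^i_k(D,N)$ for $i\le q$. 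This is the main obstacle: it is the module-theoretic refinement of the technical core of Goldman--Millson's proof of Theorem~\ref{gmaug}.

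Finally I compute $\Def^i_k(D,N)$. Because $D$ has zero differential, $(D,N)$ is trivially its own cohomology DGLA pair, hence formal, and $H^0(D)=0$ gives $[H^0(D),H^1(D)]=0$; so Corollary~\ref{cohformal} identifies $\Def^i_k(D,N)$ with $\sR^i_k(D,N)_{(0)}$. The vanishing of all brackets involving $V$ gives $\sF(D)=\sQ(D)=\sQ(C)\times V$, and since $V$ acts trivially on $N$ the tautological Aomoto complex on $\sF(D)$ is pulled back along the projection to $\sQ(C)$, so by Corollary~\ref{tensor}
\[
\sR^i_k(D,N)={}^h\sR^i_k(C,M)\times V,
\]
proving the prorepresentation. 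The commutative diagram~\eqref{comm} follows by naturality: the left column is the forgetful inclusion of cohomology-constrained subfunctors, and the right column is the product of the inclusion ${}^h\sR^i_k(C,M)\hookrightarrow\sQ(C)$ with the identity on $V$.
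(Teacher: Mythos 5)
Your endgame and your bookkeeping are correct: the Leibniz-rule computations identifying $(H^\ubul(C_0),H^\ubul(M))$ with $(D,N)$ (including $[V,V]=[V,H^1(C)]=0$ and the triviality of the $V$-action on $H^\ubul(M)$) are right, and, granting a $q$-equivalence $(C_0,M)\sim(D,N)$, the combination of Theorem \ref{independence}, Corollary \ref{cohformal} and Corollary \ref{tensor} does yield prorepresentability by $({}^h\sR^i_k(C,M)\times \mathfrak{g}/\varepsilon(H^0(C)))_{(0)}$. But the step you yourself call the heart of the proof --- the zigzag of $q$-equivalences between $(C_0,M)$ and $(D,N)$ --- is asserted, not proved, and the method you point to (``follow the argument behind Theorem \ref{gmaug}'') cannot supply it. Goldman--Millson's proof of Theorem \ref{gmaug} is not a formality statement about the augmentation ideal: it never produces a quasi-isomorphism, or zigzag, between $C_0$ and $H^\ubul(C_0)$. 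The obstruction is concrete. In the $q$-formality zigzag $(C,M)\leftarrow(\tilde C,\tilde M)\rightarrow(H^\ubul(C),H^\ubul(M))$, the roof $\tilde C$ carries only the induced augmentation $\varepsilon\circ\phi$, whose degree-zero part need not surject onto $\mathfrak{g}$, and the map to $H^\ubul(C)$ need not be augmentation-compatible. If $\mathfrak{g}_1=\mathrm{im}\,(\varepsilon\circ\phi)^0\subsetneq\mathfrak{g}$, then taking kernels gives $H^1(\ker(\varepsilon\circ\phi))\cong H^1(C)\oplus \mathfrak{g}_1/\varepsilon(H^0(C))$, which is strictly smaller than $H^1(C_0)$, so restricting the zigzag to augmentation ideals fails --- as you note --- but ``adjoining the $V$-piece to the right-hand model'' does not repair this: the defect sits in the middle of the zigzag, and you would need to modify $\tilde C$ so that its augmentation becomes surjective and compatible without destroying the $q$-equivalences. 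No such construction is given, and whether $(C_0,M)$ is $q$-formal under the stated hypotheses is precisely what would have to be proved; it is not a formal consequence of the $q$-formality of $(C,M)$.

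For comparison, the paper sidesteps augmented formality entirely. It identifies $\Def^i_k(C,M;\varepsilon)(A)$ with the isomorphism classes of the transformation groupoid $\sC^i_k(C,M;A)\bowtie\exp(\mathfrak{g}\otimes m)$, in which the augmentation enters only through the group $\exp(\mathfrak{g}\otimes m)$ and no surjectivity or formality of $C_0$ is needed; it then invokes \cite[Lemma 3.8]{gm}, together with the spectral-sequence comparison from the proof of Theorem \ref{independence2}, to replace $(C,M)$ by $(H^\ubul(C),H^\ubul(M))$ inside this groupoid, and finally computes the prorepresenting object of $A\mapsto \iso\,\sC^i_k(H^\ubul(C),H^\ubul(M);A)\bowtie\exp(\mathfrak{g}\otimes m)$ directly, obtaining the product with $\mathfrak{g}/\varepsilon(H^0(C))$ from the framing factor rather than from an enlarged degree-one cohomology. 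To salvage your route you would have to prove the augmented-formality statement yourself; as written, the proposal has a genuine gap at its central step.
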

 
\begin{proof} This essentially follows from the previous theorem of Goldman-Millson and Theorem \ref{independence}. According to \cite[3.9]{gm}, $\Def(C; \epsilon)$ associates to every Artinian local algebra $A$ the isomorphism classes of the transformation groupoid $\sC(C; A)\bowtie \exp(\mathfrak{g}\otimes m)$, where $m$ is the maximal ideal of $A$. Recall that in Definition \ref{cat}, we defined $\sC(C; A)$ to be the transformation groupoid with objects 
$$\obj \sC(C; A)=\{\omega\in C^1\otimes_{\cc}m\;|\; d\omega+\frac{1}{2}[\omega, \omega]=0\},$$
and the morphisms are defined by the action of the nilpotent group $\exp(C^0\otimes m)$. The augmentation map induces a map of Lie groups $\exp(C^0\otimes m)\to \exp(\mathfrak{g}\otimes m)$. The objects in $\sC(C; A)\bowtie \exp(\mathfrak{g}\otimes m)$ are defined to be the Cartesian product of sets $\obj \sC(C; A)\times \exp(\mathfrak{g}\otimes m)$, and the morphisms are defined by the diagonal group action of $\exp(C^0\otimes m)$. 

By definition, $\Def^i_k(C, M; \varepsilon)$ is the subfunctor which associates to an Artinian local algebra $A$  the isomorphism classes of the transformation groupoid $\sC^i_k(C, M; A)\bowtie \exp(\mathfrak{g}\otimes m)$. Now, by \cite[Lemma 3.8]{gm}, we have an equivalence of groupoids 
$$\sC^i_k(C, M; A)\bowtie \exp(\mathfrak{g}\otimes m)\simeq\sC^i_k(H^\ubul(C), H^\ubul(M); A)\bowtie \exp(\mathfrak{g}\otimes m). $$
One can easily check that the functor $A\mapsto \iso\sC^i_k(H^\ubul(C), H^\ubul(M); A)\bowtie \exp(\mathfrak{g}\otimes m)$ from $\Art$ to $\Set$ is prorepresented by the formal scheme $({}^h\sR^i_k(C,M)\times \mathfrak{g}/\varepsilon(H^0(C)))_{(0)}$, and the diagram (\ref{comm}) commutes. 
\end{proof}

\section{Holomorphic vector bundles.} \label{holomorphic}

In this  and the next few sections we study concrete deformation problems with cohomology constraints. To a fixed setup consisting of a moduli space $\cM$ with a fixed object $\rho$ and cohomology-defined strata $\cV^i_k$ for all $i$ and $k$, we attach a DGLA pair $
(C,M)
$
such that the formal germs   $(\cM_{(\rho)},(\cV^i_k)_{(\rho)})$ prorepresent $
(\Def(C) , \Def^i_k(C,M))$  for all $i$ and $k$. We also try to find when right-hand side admits further simplifications via formality, allowing a description of the left-hand side in terms of linear algebra.

Let $X$ be a compact K\"ahler manifold. Fix $n$ and $p$. Fix a poly-stable holomorphic vector bundle, i.e. locally free $\cO_X$-module, $F$ on $X$ of any rank with vanishing Chern classes. We consider the following deformation problem with cohomology constraints:
$$
(\cM ,\rho, \cV^q_k) = (\cM(X,n), E, \cV^{pq}_k(F) ),
$$
where $\cM=\sM(X, n)$ is the moduli space  of rank $n$ stable holomorphic vector bundles on $X$ with vanishing Chern classes, and in $\sM$ one defines point-wise the Hodge cohomology jump loci with respect to $F$ to be 
\begin{equation}\label{defvb}
\sV^{pq}_k(F)=\{E\in\sM \mid \dim H^q(X, E\otimes_{\sO_X} F \otimes_{\sO_X} \Omega^p_X)\geq k \}.
\end{equation}
$\sM$ is an analytic scheme \cite{lo}.  The scheme structure of $\sV^{pq}_k(F)$ is defined locally as follows. Over a small open subset $U$ of $\sM$, there is a vector bundle $\sE$ on $X\times U$ which is locally the Kuranishi family of vector bundles. Denote by $p_1$ and $p_2$ the projections from $X\times U$ to its first and second factor. 
\begin{defn}\label{subscheme}
Locally, as a subscheme of $U$, $\sV^{pq}_k(F)$ is defined by the ideal 
$$J^q_k(\Gamma(U, \mathbf{R}p_{2*}(\sE\otimes_{p_1^{-1}\sO_X} p_1^{-1}(F\otimes_{\sO_X} \Omega_X^p)))).$$
\end{defn}

Since locally two Kuranishi families are isomorphic to each other, the subschemes patch together, and hence $\sV^{pq}_k(F)$ is a well-defined closed subscheme of $\sM$. By base change and the property of determinantal ideals, one can easily check that the closed points of $\sV^{pq}_k$ satisfy (\ref{defvb}).

\begin{defn}
For a locally free sheaf $\cF$ on $X$, denote the {Dolbeault complex of sheaves} of $\cF\otimes_{\cO_X}\Omega_X^p$ by
$$
(\sAp(\cF),\bar{\partial})\stackrel{\rm{def}}{=}(\cF\otimes_{\cO_X}\Omega_X^{p,\bullet},\bar{\partial}).
$$ The corresponding complex of global sections on $X$, which we will call the {\bf Dolbeault complex} of $\cF\otimes_{\cO_X}\Omega_X^p$, will be denoted by 
$$
(\Ap(\cF),\bar{\partial})\stackrel{\rm{def}}{=}(\Gamma(X,\cF\otimes_{\cO_X}\Omega_X^{p,\bullet}),\bar{\partial}).
$$
\end{defn}

\begin{rmk}\label{rmkDGen}
It is a standard fact (cf. \cite{gm}, \cite{M}) that the DGLA $(\Ad(\enmo(E)), \bar\partial)$ controls the deformation theory of $\sM$ at $E$. It means that the deformation functor $\Def(\Ad(\enmo(E)))$ is prorepresented by the formal scheme $\sM_{(E)}$. This is a particular case of a more general result of \cite{FIM} which states that for any complex manifold or complex algebraic variety $X$, the infinitesimal deformations of an $\cO_X$-coherent sheaf $E$ are controlled by the DGLA  of global sections $\Gamma (X,\cA^\ubul (\enmo^\ubul (\tilde{E}^\ubul)))$ of any acyclic resolution $\cA^\ubul$ of the sheaf of DGLAs $\enmo^\ubul(\tilde{E}^\ubul)$ of a locally free resolution  $\tilde{E}^\ubul$ of $E$. If $X$ is smooth, then $\cA^\ubul$ can chosen to be the Dolbeault resolution. Note that for this type of statements it does not matter if a moduli space can be constructed. Note also that to have a meaningful infinitesimal deformation problem with cohomology constraints as in  (\ref{defvb}), we must ask for $X$ to be a compact manifold or a proper algebraic variety.
\end{rmk}

 Suppose $E\in \sV^{pq}_k(F)$. Then the DGLA pair 
$$(\Ad(\enmo(E)), \Ap(E\otimes F))$$ controls the deformation theory of $\sV^{pq}_k(F)$ at $E$, where the DGLA pair structure comes from the usual map $\enmo(E)\otimes E\to E$:

\begin{thrm}\label{vb1} Let $X$ be a compact K\"ahler manifold.
For any $E\in \sM$, the natural isomorphism of functors from $\Art$ to $\Set$
$$\sM_{(E)}\cong \Def(\Ad(\enmo(E)))$$
 induces for any $p,q, k\in \nn$ a natural isomorphism of subfunctors
$$\sV^{pq}_{k}(F)_{(E)}\cong \Def^q_k(\Ad(\enmo(E)), \Ap(E\otimes F)).$$ 
\end{thrm}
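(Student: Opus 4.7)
The plan is to identify, for every Artinian local algebra $A$ with maximal ideal $m$, the complex of $A$-modules whose cohomology jump ideals cut out $\sV^{pq}_k(F)_{(E)}(A)$ with the Aomoto complex whose cohomology jump ideals cut out $\Def^q_k(\Ad(\enmo(E)), \Ap(E\otimes F))(A)$. By Corollary \ref{idealindep} it suffices to produce a quasi-isomorphism between these two complexes of $A$-modules, compatible with the bijection of Maurer--Cartan classes and deformations of $E$.

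Under the isomorphism $\sM_{(E)}\cong \Def(\Ad(\enmo(E)))$ recalled in Remark \ref{rmkDGen}, a Maurer--Cartan class $[\omega]\in\Def(\Ad(\enmo(E)))(A)$ with $\omega\in\Ad(\enmo(E))^1\otimes_\bC m$ corresponds to the holomorphic structure $\bar\partial+\omega$ on the $C^\infty$-bundle underlying $E\otimes_\bC A$, which defines a locally free $\cO_{X\times\spec A}$-module $\sE_\omega$ restricting to $E$ over the closed fiber; locally over $\sM_{(E)}$ this $\sE_\omega$ is pulled back from the Kuranishi family $\sE$ of Definition \ref{subscheme}. Resolving $\sE_\omega\otimes_{p_1^{-1}\sO_X}p_1^{-1}(F\otimes\Omega^p_X)$ by the relative Dolbeault sheaves produces a resolution whose differential is $\bar\partial_{E\otimes F\otimes\Omega^p_X}+\omega$, with $\omega$ acting on the $E$-factor through the module structure; the key point is that $F\otimes\Omega^p_X$ is pulled back from $X$ and so undergoes no deformation. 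Because the Dolbeault sheaves are fine and $p_2$ is proper, pushing forward amounts to taking global sections over $X$, producing precisely the Aomoto complex
$$
(\Ap(E\otimes F)\otimes_\bC A,\; d_\omega = \bar\partial\otimes\id_A+\omega)
$$
of Definition \ref{cohdef}. By Corollary \ref{idealindep}, its cohomology jump ideals agree with those of $\mathbf{R}p_{2*}\bigl(\sE_\omega\otimes_{p_1^{-1}\sO_X}p_1^{-1}(F\otimes\Omega^p_X)\bigr)$.

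Combining these identifications, $\omega$ represents a class in $\sV^{pq}_k(F)_{(E)}(A)\subset\sM_{(E)}(A)$ if and only if $J^q_k(\Ap(E\otimes F)\otimes_\bC A,\, d_\omega)=0$, i.e., iff $[\omega]\in\Def^q_k(\Ad(\enmo(E)),\Ap(E\otimes F))(A)$, giving the claimed isomorphism of subfunctors; independence from the representative $\omega$ within its gauge class has already been recorded in Proposition \ref{propexp}. The main obstacle I anticipate is the relative Hodge-theoretic identification in the middle step: checking that the twisted relative Dolbeault sheaves on $X\times\spec A$ really do form a $p_{2*}$-acyclic resolution and that the resulting global-sections complex coincides on the nose with the Aomoto complex. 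This should work cleanly because tensoring with the fixed poly-stable bundle $F$ and the fixed $\Omega^p_X$ introduces no new holomorphic deformation, so the only twist of the Dolbeault differential is the Lie-algebra action of $\omega$, which is exactly the DGLA-pair structure on $(\Ad(\enmo(E)), \Ap(E\otimes F))$.
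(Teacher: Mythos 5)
Your proposal follows the same strategy as the paper's own proof: identify the Aomoto complex $(\Ap(E\otimes F)\otimes_\bC A, d_\omega)$ with the global relative Dolbeault complex of $s^*\sE\otimes_{p_1^{-1}\sO_X} p_1^{-1}(F\otimes_{\sO_X}\Omega^p_X)$, hence (by $p_{2*}$-acyclicity of the fine Dolbeault sheaves) with a representative of $\bR p_{2*}$ of that sheaf, and then compare cohomology jump ideals. The ``main obstacle'' you flag --- that the twisted relative Dolbeault sheaves form an acyclic resolution whose global-sections complex is exactly the Aomoto complex --- is precisely the point the paper disposes of by citing \cite[Section 6]{gm} and \cite[Proposition 3.4]{w}, and your reasoning there is sound.

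There is, however, one step that you use without stating or justifying, and it is one of the two displayed identities in the paper's proof. By Definition \ref{subscheme}, the functor value $\sV^{pq}_k(F)_{(E)}(A)$ consists of those $s\colon \spec(A)\to \sM_{(E)}$ that factor through the closed subscheme of $U$ defined by the ideal $J^q_k(\bR p_{2*}(\sE\otimes_{p_1^{-1}\sO_X} p_1^{-1}(F\otimes_{\sO_X}\Omega^p_X)))$ --- an ideal of $\sO_U$ computed once and for all from the Kuranishi family over $U\subset\sM$. Your argument instead works throughout with the ideal $J^q_k(\bR p_{2*}(s^*\sE\otimes\cdots))$, an ideal of $A$ computed after pulling back along $s$. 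Equating ``the image in $A$ of the ideal over $U$ vanishes'' with ``the jump ideal of the pulled-back family vanishes'' is exactly the base-change identity
$$
J^q_k\bigl(\bR p_{2*}(\sE\otimes\cdots)\bigr)\otimes_{\sO_U}A \;=\; J^q_k\bigl(\bR p_{2*}(s^*\sE\otimes\cdots)\bigr),
$$
which is a statement requiring proof, not a definition: it follows from cohomology-and-base-change for the $U$-flat sheaf $\sE\otimes p_1^{-1}(F\otimes_{\sO_X}\Omega^p_X)$ under the proper map $p_2$, combined with Corollary \ref{tensor} (jump ideals commute with base change, via Mumford's lemma and the fact that determinantal ideals commute with tensor product). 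Your proposal cites Corollary \ref{idealindep} and Proposition \ref{propexp} but never Corollary \ref{tensor}, and indeed your opening sentence presupposes that $\sV^{pq}_k(F)_{(E)}(A)$ is ``cut out'' by the jump ideals of some complex of $A$-modules --- which is precisely what this missing step establishes. Once it is inserted, your argument is complete and coincides with the paper's proof of Theorem \ref{vb1}.
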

\begin{proof}

Let $A$ be an Artinian local algebra. Given any $s\in \sM_{(E)}(A)=\homo(\spec(A), \sM_{(E)})$, denote its image in $\Def(\Ad(\enmo(E)))(A)$ by $\omega$. We need to show that $s\in \sV^{pq}_{k}(F)_{(E)}(A)$ if and only if $\omega\in \Def^q_k(\Ad(\enmo(E)), \Ap(E\otimes F))(A)$. As in Definition \ref{cohdef}, the complex associated to $\omega$ is $(\Ap(E\otimes F)\otimes_\cc A, d_\omega)$.

Denote by $E_s$ the pull-back of the Kuranishi family $\sE$ by the composition $\spec(A)\stackrel{s}{\to}\sM_{(E)}\to \sM$. Then $E_s$ is a vector bundle on $X_A\stackrel{\textrm{def}}{=}X\times_{\spec(\cc)} \spec(A)$.  
Denote the second projection by $p_2: X_A\to \spec(A)$. By the construction (cf. \cite[Section 6]{gm}, \cite[Proposition 3.4]{w}), $(\Ap(E\otimes F)\otimes_\cc A, d_\omega)$ is equal to the Dolbeault complex of the vector bundle $E_s\otimes_{p_1^{-1}\sO_X} p_1^{-1}(F \otimes_{\sO_X}\Omega^p_X)$, and hence it is quasi-isomorphic to $\bR p_{2*}(E_s\otimes_{p_1^{-1}\sO_X} p_1^{-1}(F \otimes_{\sO_X}\Omega^p_X))$ as complexes of $A$-modules. Therefore, 
\begin{equation}\label{idealeq1}
J^q_k(\Ap(E\otimes F)\otimes_\cc A, d_\omega)=J^q_k(\bR p_{2*}(E_s\otimes_{p_1^{-1}\sO_X} p_1^{-1}(F \otimes_{\sO_X}\Omega^p_X)))
\end{equation}
as ideals of $A$. 

Since taking determinantal ideals commutes with base change, 
\begin{equation}\label{idealeq2}
J^q_k(\bR p_{2*}(E_s\otimes_{p_1^{-1}\sO_X} p_1^{-1}(F \otimes_{\sO_X}\Omega^p_X)))=J^q_k(\bR p_{2*}(\sE\otimes_{p_1^{-1}\sO_X} p_1^{-1}(F \otimes_{\sO_X}\Omega^p_X)))\otimes_{\sO_{U}}A,
\end{equation}
where $U$ is an open subset of $\sM$ where the Kuranishi family is defined, and we use $p_1$, $p_2$ for projections to first and second factors of the products $X\times_{\spec{\cc}} \spec(A)$ and $X\times_{\spec{\cc}} U$, respectively, on each side of the equality. 

By definition, $s\in \sV^{pq}_{k}(F)_{(E)}(A)$ if and only if $$J^q_k(\bR p_{2*}(\sE\otimes_{p_1^{-1}\sO_X} p_1^{-1}F \otimes_{p_1^{-1}\sO_X} p_1^{-1}\Omega^p_X))\otimes_{\sO_{U}}A=0.$$ On the other hand, $\omega\in \Def^q_k(\Ad(\enmo(E)), \Ap(E\otimes F))(A)$ if and only if $$J^q_k(\Ap(E\otimes F)\otimes_\cc A, d_\omega)=0.$$ We obtain that $s\in \sV^{pq}_{k}(F)_{(E)}(A)$ if and only if $\omega\in \Def^q_k(\Ad(\enmo(E)),  \Ap (E\otimes F))(A)$ by (\ref{idealeq1}) and (\ref{idealeq2}).
\end{proof}

\begin{rmk}\label{rmkAbsHol}
If we replace $\cM_{(E)}$ and $\sV^{pq}_{k}(F)_{(E)}$ by the abstract deformation functors, the theorem still holds for any compact complex manifold $X$ and any holomorphic vector bundles $E$ and $F$, cf. also Remark \ref{rmkDGen}. For the purpose of this paper, we focus on the case leading to formality of the DGLA pairs. This will require the K\"ahler and the vanishing chern classes assumptions. 
\end{rmk}

\begin{que}\label{queGQVB}
One can ask a general question, in light of Remarks \ref{rmkDGen} and \ref{rmkAbsHol}: are the infinitesimal deformations of a bounded complex of $\cO_X$-coherent sheaves $E^\ubul$ on a compact complex manifold, or smooth complex algebraic variety $X$, with the hypercohomology constraint
$$
\dim_\bC \hh^q(X,E^\ubul\otimes F^\ubul\otimes \Omega_X^p)\ge k,
$$
for a bounded-above complex of $\cO_X$-coherent sheaves $F^\ubul$, governed by the DGLA pair
$$
(\Ad(\enmo^\ubul(\tilde{E}^\ubul)), \Ap (Tot^\ubul(\tilde{E}^\ubul\otimes \tilde{F}^\ubul))),
$$
where $\tilde{E}$, $\tilde{F}$ are locally free resolutions of $E$ and $F$, and $Tot^\ubul$ is the total complex? 
\end{que}

The next formality result will provide a concrete description of the formal scheme of the cohomology jump loci via linear algebra. 
\begin{thrm}[\cite{dgms}]\label{formal1}
 Let $X$ be a compact K\"ahler manifold.
For any $E\in \sM$, the DGLA pair \\
$$(\Ad(\enmo(E)), \Ap(E\otimes F))$$ is formal. 
\end{thrm}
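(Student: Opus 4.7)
The plan is to prove formality by adapting the Deligne--Griffiths--Morgan--Sullivan (DGMS) ``principle of two differentials'' to DGLA pairs with twisted coefficients. The required flat unitary structure is supplied by the Kobayashi--Hitchin correspondence.

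I would begin by invoking Uhlenbeck--Yau: since $X$ is compact K\"ahler, the stable $E$ with vanishing Chern classes admits a Hermite--Einstein metric whose Chern connection $D_E$ is flat and unitary; likewise the poly-stable $F$ with vanishing Chern classes decomposes into stable summands each carrying a flat unitary Chern connection. Consequently $\enmo(E)=E\otimes E^*$ and $E\otimes F$ inherit flat unitary structures, and so does $E\otimes F\otimes\Omega_X^p$ after tensoring with the flat unitary structure on $\Omega_X^p$ induced by the K\"ahler metric. For such a flat unitary bundle $V$ on a compact K\"ahler manifold, the flat connection splits as $D_V=\partial+\bar\partial$ with $\partial^2=\bar\partial^2=\partial\bar\partial+\bar\partial\partial=0$, and the K\"ahler identities extend to $V$-valued forms to give $\Delta_\partial=\Delta_{\bar\partial}$ and hence the twisted $\partial\bar\partial$-lemma.

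I would then construct the DGMS zig-zag for the pair $(C,M)=(\Ad(\enmo(E)),\Ap(E\otimes F))$. Let $C':=\ker(\partial)\cap C$ and $M':=\ker(\partial)\cap M$, with differential $\bar\partial$. Since $\partial$ is a graded derivation of the bracket on $C$ and of the $C$-action on $M$, $C'$ is a sub-DGLA of $C$, $M'$ is a sub-$C'$-module of $M$, and $\partial C\subset C'$, $\partial M\subset M'$ are a DGLA ideal and a $C'$-submodule, respectively. The $\partial\bar\partial$-lemma forces the induced $\bar\partial$ on the quotient pair to vanish, and makes the natural morphisms of DGLA pairs
\[
(C,M)\;\hookleftarrow\;(C',M')\;\twoheadrightarrow\;(C'/\partial C,\; M'/\partial M)
\]
into quasi-isomorphisms on both components; the right-hand pair is naturally identified with $(H^\ubul(C),H^\ubul(M))$ equipped with its induced graded Lie bracket and module action. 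Hence $(C,M)$ and its cohomology pair are $\infty$-homotopy equivalent in the sense of Definition \ref{defnHot}, which is formality.

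The main obstacle I anticipate is carrying the ``second differential'' argument correctly in the Dolbeault setting, since $\partial$ shifts the Hodge bidegree and so does not act directly on the $(0,\ubul)$- or $(p,\ubul)$-parts that define $C$ and $M$. One resolves this either by realizing the whole zig-zag inside the full bi-graded complex and then restricting, or by appealing to the bundle-valued extensions of the DGMS formality argument due to Goldman--Millson, Nadel, and Simpson. A secondary, bookkeeping point is to check that the bracket and action on the cohomology pair match the canonical ones, which follows because any $\bar\partial$-cohomology class can be represented by a form that is simultaneously $\partial$- and $\bar\partial$-closed, unique modulo $\partial$-exact terms.
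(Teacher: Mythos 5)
Your proposal is correct and takes essentially the same approach as the paper's proof: Uhlenbeck--Yau provides the flat unitary metrics on $\enmo(E)$ and $E\otimes F$, and formality follows from the DGMS-style zig-zag through the sub-pair of $\partial$-closed forms, with both maps quasi-isomorphisms by the twisted K\"ahler identities (the paper invokes Simpson's Lemma 2.2 at exactly this point). The two caveats you flag are handled just as you suggest; note only that no flat structure on $\Omega_X^p$ is needed (nor available in general), since the K\"ahler identities for $E\otimes F$-valued $(p,q)$-forms suffice, and that $\partial C$ and $\partial M$ must be read as the $\partial$-exact forms within the given bidegree (for $C$ this space is zero, so the quotient map there is an isomorphism onto harmonic forms).
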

\begin{proof}
Since both $E$ and $F$ are poly-stable and are of vanishing Chern classes, there exist flat unitary metrics on both $E$ and $F$, according to \cite{uy}. Hence $E\otimes F$ admits a flat unitary metric too. The Chern connection on $E\otimes F$ induced by the flat unitary metric is flat. Similarly, on $\enmo(E)$ there is also a flat unitary metric, whose Chern connection is also flat. Denote the $(1,0)$ part of the flat connections by $\partial$. Denote the subcomplexes of $\Ad(\enmo(E))$ and $\Ap(E\otimes F)$ consisting $\partial$-closed forms  by $K\Ad(\enmo(E))$ and $K\Ap(E\otimes F)$, respectively. Clearly, $(K\Ad(\enmo(E)), K\Ap(E\otimes F))$ is a sub-DGLA pair of $(\Ad(\enmo(E)), \Ap(E\otimes F))$, i.e., the inclusion map
\begin{equation}\label{dglamap1}
(K\Ad(\enmo(E)), K\Ap(E\otimes F))\to (\Ad(\enmo(E)), \Ap(E\otimes F))
\end{equation}
is a map of DGLA pairs. On the other hand, thanks to the existence of flat unitary metrics, $H^q(X, \enmo(E))$ can be computed by $\partial$-closed $\enmo(E)$-valued $(0, q)$-forms modulo $\partial$-exact forms, and similarly $H^q(X, E\otimes F\otimes \Omega^p)$ can be computed by $\partial$-closed $E\otimes F$-valued $(p, q)$-forms modulo $\partial$-exact forms. Hence, there is a natural surjective map of DGLA pairs. 
\begin{equation}\label{dglamap2}
(K\Ad(\enmo(E)), K\Ap(E\otimes F))\to (H^\ubul(\Ad(\enmo(E)), H^\ubul(\Ap(E\otimes F))))
\end{equation}
As in Lemma 2.2 of \cite{s1}, one can easily show that the cohomology classes of $K\Ad(\enmo(E))$ and $K\Ap(E\otimes F)$ are represented by harmonic forms. Therefore, the two maps (\ref{dglamap1}) and (\ref{dglamap2}) are both $\infty$-equivalent maps. Thus, $(\Ad(\enmo(E)), \Ap(E\otimes F))$ is formal. 
\end{proof}

\begin{rmk}\label{resonance1} Let us spell out what are the quadratic cone and the cohomology resonance varieties in this case, as defined in Section \ref{secQR}. The quadratic cone $\sQ$ of the DGLA $\Ad(\enmo(E))$ will be denoted $\sQ(E)$ and is
$$
\sQ(E)=\{\eta\in H^1(X, \enmo(E))\mid
\eta\wedge\eta=0\in H^2(X, \enmo(E))\}.
$$
The cohomology resonance variety $^h\sR^q_k(\Ad(\enmo(E)), \Ap(E\otimes F))$ will be denoted by $\sR^{pq}_k(E; F)$ to simplify the notation. Point-wise, 
$$
\sR^{pq}_k(E; F)=\{\eta\in \sQ(E) \mid  \dim H^q(H^\ubul(X, E\otimes F\otimes \Omega^p_X),\eta \wedge\cdot )\geq k\},
$$
and its scheme structure is defined using the cohomology jump ideal of the universal cohomology Aomoto complex as in Definition \ref{dglaresonance}.
\end{rmk}

It was shown by Nadel \cite{n} and Goldman-Millson \cite{gm} that there is an isomorphism of formal schemes $\cM_{(E)}\cong \sQ(E)_{(0)}$, and thus $\cM$ has quadratic algebraic singularities. The proof follows easily from Corollary \ref{corQ}. We generalize this as follows.

\begin{cor}\label{corSVB} {\bf [ = Theorem \ref{thrmHolVB}.]}
The isomorphism of formal schemes
$$
\sM_{(E)}\cong \sQ(E)_{(0)}
$$
induces for any $p,q, k\in \nn$ an isomorphism
$$
\sV^{pq}_k(F)_{(E)}\cong \sR^{pq}_k(E; F)_{(0)}. 
$$
\end{cor}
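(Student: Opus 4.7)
The plan is to chain together Theorem \ref{vb1}, Theorem \ref{formal1}, Theorem \ref{independence}, and Corollary \ref{cohformal}. By Theorem \ref{vb1}, the formal germ $\sV^{pq}_k(F)_{(E)}$ is naturally isomorphic, as a subfunctor of $\sM_{(E)}\cong\Def(\Ad(\enmo(E)))$, to the cohomology jump functor $\Def^q_k(\Ad(\enmo(E)),\Ap(E\otimes F))$. So it suffices to identify this subfunctor, via the same ambient identification $\sM_{(E)}\cong\sQ(E)_{(0)}$ that factors through the DGLA $\Ad(\enmo(E))$, with the formal germ $\sR^{pq}_k(E;F)_{(0)}\subset\sQ(E)_{(0)}$.

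First I would verify the hypotheses of Corollary \ref{cohformal}. Formality of the DGLA pair $(\Ad(\enmo(E)),\Ap(E\otimes F))$ is exactly Theorem \ref{formal1}. The bracket vanishing $[H^0(C),H^1(C)]=0$ for $C=\Ad(\enmo(E))$ holds because stability of $E$ forces $H^0(X,\enmo(E))=\cc\cdot\id$, whose Lie bracket against anything is zero. Invoking Theorem \ref{independence} along the zig-zag of $\infty$-equivalences witnessing formality in Theorem \ref{formal1} gives a natural isomorphism of cohomology jump subfunctors
\[
\Def^q_k(\Ad(\enmo(E)),\Ap(E\otimes F))\;\cong\;\Def^q_k(H^\ubul(\Ad(\enmo(E))),H^\ubul(\Ap(E\otimes F))),
\]
compatible with the ambient isomorphism $\Def(\Ad(\enmo(E)))\cong\Def(H^\ubul(\Ad(\enmo(E))))$ induced by the same zig-zag via Theorem \ref{gm0}.

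Corollary \ref{cohformal} then prorepresents the right-hand cohomology jump functor by the formal germ at the origin of the cohomology resonance variety, which by Remark \ref{resonance1} is precisely $\sR^{pq}_k(E;F)_{(0)}$, sitting inside the prorepresentation $\sQ(E)_{(0)}$ of the ambient functor $\Def(H^\ubul(\Ad(\enmo(E))))$. Concatenating these natural isomorphisms of pairs (ambient functor, subfunctor) yields the claim. There is really no genuine obstacle here, since the hard work has already been done in Theorem \ref{vb1} (identifying the analytic germ with a DGLA-pair functor) and Theorem \ref{formal1} (formality, via Hodge theory and flat unitary metrics provided by \cite{uy}); the only point to watch is that the ambient identification $\sM_{(E)}\cong\sQ(E)_{(0)}$ and its restriction to the cohomology jump subfunctors are induced by one and the same formality zig-zag of $\Ad(\enmo(E))$, which is the degree-zero component of the DGLA pair, so compatibility is automatic.
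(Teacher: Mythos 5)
Your proposal is correct and follows essentially the same route as the paper: it chains Theorem \ref{vb1}, Theorem \ref{formal1}, and Corollary \ref{cohformal}, and verifies the one nontrivial hypothesis $[H^0(C),H^1(C)]=0$ exactly as the paper does, via stability forcing $H^0(X,\enmo(E))=\cc\cdot\id_E$. The paper merely compresses the functorial bookkeeping you spell out (the zig-zag compatibility and prorepresentability) into an appeal to Yoneda's lemma, so there is no substantive difference.
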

\begin{proof}
Using Yoneda's lemma, one can easily see that if two formal schemes prorepresent the same functor from $\Art$ to $\Set$, then the two formal schemes are isomorphic. Thus, the corollary is a direct consequence of Proposition \ref{formal1}, Corollary \ref{cohformal} and Proposition \ref{vb1}. The only thing we need to check is that 
\begin{equation}\label{zero}
[H^0(\Ad(\enmo(E))), H^1(\Ad(\enmo(E)))]=0.
\end{equation}
Since $E$ is stable, $H^0(\Ad(\enmo(E)))= H^0(X, \enmo(E))= \cc\cdot \id_E$. Clearly, $[\id_E, -]=0$. 
\end{proof}

\begin{rmk}
If $E$ is only poly-stable, then (\ref{zero}) is not true in general. So for the whole moduli space of semi-stable vector bundles we do not have a nice local description of the Hodge cohomology jump loci as in the above corollary. In fact, the moduli space itself may not have quadratic singularity at some points, which are semi-stable but not stable. 
\end{rmk}

\begin{cor}\label{corQuad}
Suppose $k=\dim H^q(X, E\otimes F\otimes \Omega^p_X)$. Then $\sV^{pq}_k(F)$ has  quadratic algebraic singularities at $E$. 
\end{cor}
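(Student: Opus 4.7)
The plan is to deduce this from the preceding Theorem~\ref{thrmHolVB}/Corollary~\ref{corSVB}, which identifies $\sV^{pq}_k(F)_{(E)}$ with $\sR^{pq}_k(E;F)_{(0)}$ as formal germs. Hence it suffices to show that the cohomology resonance variety $\sR^{pq}_k(E;F)$ has quadratic algebraic singularities at the origin of $H^1(X,\enmo(E))$.

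Next I would write down explicit generators of the ideal of $\sR^{pq}_k(E;F)$ inside $H^1(X,\enmo(E))$. By Definition~\ref{dglaresonance} (applied to the cohomology DGLA pair), this ideal is generated by two pieces: the quadratic forms $\eta\wedge\eta=0$ cutting out $\sQ(E)$ inside $H^1(X,\enmo(E))$, and the cohomology jump ideal $J^q_k$ of the universal Aomoto complex
$$\bigl(H^\ubul(\Ap(E\otimes F))\otimes_\cc \sO_{\sQ(E)},\;\zeta_{\sQ(E)}\wedge\cdot\bigr).$$

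The key step is to observe that when $k=\dim H^q(X,E\otimes F\otimes\Omega^p_X)$, the ideal $J^q_k$ is generated by homogeneous linear forms. Indeed, since $k$ equals the rank of the middle term $H^q(X,E\otimes F\otimes\Omega^p_X)\otimes\sO_{\sQ(E)}$, one has
$$J^q_k = I_1(d^{q-1}_\zeta \oplus d^q_\zeta),$$
the ideal generated by all matrix entries of the differentials $d^{q-1}_\zeta=\zeta\wedge\cdot$ and $d^q_\zeta=\zeta\wedge\cdot$. Because the tautological section $\zeta$ is itself linear in the coordinates of $H^1(X,\enmo(E))$, every such entry is a homogeneous linear form, and the claim follows.

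Combining, the ideal of $\sR^{pq}_k(E;F)$ in the vector space $H^1(X,\enmo(E))$ is generated by homogeneous polynomials of degree at most two. Equivalently, after restricting to the linear subspace $V\subset H^1(X,\enmo(E))$ cut out by the linear generators of $J^q_k$, the remaining equations $\eta\wedge\eta=0$ are homogeneous quadratic; this is the definition of quadratic algebraic singularity at $0$. Transporting via the isomorphism $\sV^{pq}_k(F)_{(E)}\cong \sR^{pq}_k(E;F)_{(0)}$ yields the conclusion. The only subtlety is correctly identifying the determinantal cutoff when $k$ equals the middle rank, which is the source of the linearity of $J^q_k$; everything else is bookkeeping.
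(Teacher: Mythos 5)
Your proposal is correct and follows essentially the same route as the paper: reduce via Corollary \ref{corSVB} to the resonance variety, note that when $k=\dim H^q(X,E\otimes F\otimes\Omega^p_X)$ the jump ideal $J^q_k$ becomes the ideal of $1\times 1$ minors of the universal Aomoto differentials, whose entries are linear in the coordinates of $H^1(X,\enmo(E))$, so $\sR^{pq}_k(E;F)$ is the intersection of the quadratic cone $\sQ(E)$ with a linear subspace. This is precisely the paper's argument, with the determinantal cutoff $I_{\mathrm{rank}-k+1}=I_1$ made explicit.
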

\begin{proof}
When $k=\dim H^q(X, E\otimes F\otimes \Omega^p_X)$, the resonance variety $\sR^{pq}_k(E; F)$ is a quadratic cone in $H^1(X, \enmo(E))$. Indeed,  $\sR^{pq}_k(E; F)$ is defined by a determinantal ideal of $1\times 1$ minors, so $\sR^{pq}_k(E; F)$ is isomorphic to the intersection of the quadratic cone $\sQ(E)$ and a linear subspace. Now, it follows from the previous corollary that $\sV^{pq}_k(F)_{(E)}$ is isomorphic to the formal scheme of a quadratic cone at the origin. 
\end{proof}

\begin{rmk}
Corollary \ref{corQuad} was shown for $F\otimes\Omega_X^p=\cO_X$ by Martinengo \cite{ma} and the second author \cite{w}.
\end{rmk}

\begin{rmk}\label{rmkGL}
The case when $n=1$ and $F=\cO_X$ of the Corollary \ref{corSVB} is due to Green-Lazarsfeld \cite{gl1,gl2} and phrased in terms of their {\it derived complex}. This complex is the universal complex used by us to define cohomology resonance varieties in Definition \ref{dglaresonance}. In this case, $\cM=\Pic^\tau (X)$ is locally isomorphic via the inverse of the exponential map with the cone $\sQ(E)$ which is the whole $H^1(X,\enmo(E))=H^1(X,\cO_X)$. As in \cite{w}, by choosing $E$ to be a smooth point on the cohomology jump loci, the proof of Corollary \ref{corQuad} then implies a result in {\it loc. cit.} that $\cV^{pq}_k$ are union of translates of subtori (this has been generalized in \cite{bw}). It also implies the next corollary.
\end{rmk}

\begin{cor}\label{corRk1} {{\rm (}}\cite{gl2}, \cite[Theorem 4.2]{M-a}{\rm{)}} Assume $n=1$, that is, $\mathcal{M}=Pic^\tau(X)$. If $E$ is a singular point of $\cV^{pq}_k(F)$, then $E\in \cV^{pq}_{k+1}(F)$.
\end{cor}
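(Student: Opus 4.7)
The plan is to reduce the statement to a linear-algebra assertion about the cohomology resonance variety $\sR^{pq}_k(E;F)$ via Corollary \ref{corSVB}, and then exploit the very simple structure that the rank-one hypothesis forces on that resonance variety.

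First I would apply Corollary \ref{corSVB} directly: since $\cV^{pq}_k(F)_{(E)}\cong \sR^{pq}_k(E;F)_{(0)}$ as formal schemes, the point $E\in\cV^{pq}_k(F)$ is singular if and only if the origin is a singular point of $\sR^{pq}_k(E;F)$. Moreover, in rank $n=1$ the endomorphism sheaf $\enmo(E)=\cO_X$ has trivial bracket, so $\sQ(E)=H^1(X,\cO_X)$ is just a vector space, and $\sR^{pq}_k(E;F)$ is the closed subscheme of this vector space defined by the cohomology jump ideal $J^q_k$ of the universal cohomology Aomoto complex
$$
\cdots \longrightarrow H^{q-1}(X,E\otimes F\otimes\Omega_X^p)\otimes \cO \xrightarrow{\,\wedge\zeta\,} H^{q}(X,E\otimes F\otimes\Omega_X^p)\otimes \cO \xrightarrow{\,\wedge\zeta\,} H^{q+1}(X,E\otimes F\otimes\Omega_X^p)\otimes \cO \longrightarrow\cdots,
$$
where $\zeta$ is the tautological section of $H^1(X,\cO_X)\otimes \cO_{\sQ(E)}$.

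Next, set $h^q=\dim H^q(X,E\otimes F\otimes \Omega_X^p)$. Since $E\in\cV^{pq}_k(F)$ by assumption, we have $h^q\ge k$. I split into two cases. If $h^q\ge k+1$, then $E\in\cV^{pq}_{k+1}(F)$ and we are done. Otherwise $h^q=k$; in this case, by Definition-Proposition \ref{defprop}, the ideal $J^q_k$ is $I_{h^q-k+1}(d^{q-1}\oplus d^q)=I_1(d^{q-1}\oplus d^q)$, i.e.\ it is generated by the entries of the matrices representing the two differentials $\wedge\zeta$. After choosing bases, these entries are \emph{linear} functionals on $H^1(X,\cO_X)$, coming from the cup products $H^1(X,\cO_X)\otimes H^i\to H^{i+1}$ for $i=q-1,q$.

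Therefore, in the case $h^q=k$, the resonance variety $\sR^{pq}_k(E;F)$ is the common zero locus of a finite family of linear forms on $H^1(X,\cO_X)$, hence a linear subspace; in particular its formal germ at the origin is regular, contradicting the singularity of $E$ transported under the isomorphism of Corollary \ref{corSVB}. Thus only the case $h^q\ge k+1$ can occur, and $E\in\cV^{pq}_{k+1}(F)$. The only delicate point is to verify that the isomorphism of Corollary \ref{corSVB} genuinely identifies the scheme-theoretic singular locus, but this is automatic since an isomorphism of formal germs preserves regularity of the local ring; everything else is a routine unpacking of determinantal ideals.
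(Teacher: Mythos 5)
Your proof is correct and follows essentially the same route as the paper's (implicit) argument: the paper derives this corollary in Remark \ref{rmkGL} from Corollary \ref{corSVB} together with the proof of Corollary \ref{corQuad}, i.e., precisely the observation that when $k=\dim H^q(X,E\otimes F\otimes\Omega^p_X)$ the resonance variety is cut out by $1\times 1$ minors, hence is the intersection of $\sQ(E)$ with a linear subspace, which in rank one (where $\sQ(E)=H^1(X,\cO_X)$ because the bracket vanishes) is a genuine linear subspace and therefore smooth at the origin. Your case split ($h^q\ge k+1$ versus $h^q=k$) and the transfer of regularity through the formal-scheme isomorphism are exactly the intended reasoning.
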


\section{Representations of $\pi_1(X)$ and local systems.} \label{localsystem}

Let $X$ be finite-type CW-complex with base point $x\in X$.
Fix $n$. Let 
$$\bR(X, n)=\homo(\pi_1(X, x), GL(n, \cc)).$$ Since $\pi_1(X, x)$ is finitely presented, $\bR(X,n)$ is an algebraic scheme.

Fix $W$ a local system of any rank on $X$. We consider now the deformation problem with cohomology constraints
$$
(\bR(X,n), \rho, \ti\sV^i_k(W))
$$
where  the cohomology jump loci are defined as
$$
\ti\sV^i_k(W)=\{\rho\in \bR(X, n)\,|\, \dim H^i(X, L_\rho\otimes_\bC W)\geq k\},
$$
where $L_\rho$ is the rank $n$ local system on $X$ associated to the representation $\rho$. 

One can give $\ti\sV^i_k(W)$ a closed subscheme structure by the universal local system $\sL$ on $X\times \bR(X, n)$ as follows. Here $\sL$ is actually a local system of $R$-modules on $X$, where $R=H^0(\cO_{\bR(X,n)})$, such that $\sL\otimes_R (R/m_\rho)=L_\rho$, where $m_\rho$ is the maximal ideal of the closed point $\rho$ in $R$. Let $a:X\ra pt$ be the map from $X$ to a point. Then $Ra_*(\cL\otimes_\bC W)$ represents a bounded complex of free $R$-modules with finitely generated cohomology. Thus we can define the closed subscheme $\ti\sV^i_k(W)$ of $\bR(X,n)=\spec R$ by the ideal
$$
J^i_k(Ra_*(\cL\otimes_\bC W)).
$$
By base change and Corollary \ref{corFl}, the closed points of $\ti\sV^i_k(W)$ are the representations $\rho$ with $\dim H^i(X, L_\rho\otimes_\bC W)\geq k$. Equivalently, one can use the definition of the cohomology of local systems in terms of twisted cochain complexes on the universal covering of $X$ to define the scheme structure on $\ti\sV^i_k(W)$. The cohomology jump loci  for finite CW-complexes can be rather arbitrary \cite{w-ex}.

Assume from now that $X$ is a smooth manifold of the homotopy type of a finite CW-complex. 

\begin{defn} For a local system $\cF$ on $X$, let $(\sAr(\cF),d)$ be the {de Rham complex of sheaves} of $\cF$-valued $C^\infty$-forms on $X$. The corresponding complex of global sections on $X$, which we will call the {\bf de Rham complex} of $\cF$, will be denoted $\Ar(\cF)$. 
\end{defn}

Let $\mathfrak{g}=\enmo(L_\rho)|_{x}$, and let the DGLA augmentation map $\varepsilon: \Ar(\enmo(L_\rho))\to \mathfrak{g}$ be the restriction map. Let $\rho\in\bR(X, n)$. Goldman-Millson \cite{gm} showed that the formal scheme of $\bR(X, n)$ at $\rho$ prorepresents the functor $\Def(\Ar(\enmo(L_\rho)); \varepsilon)$. See Section \ref{augment} for the definition of this functor. We generalize this to $\ti\sV^i_k (W)$, noting first that $$(\Ar(\enmo(L_\rho)), \Ar(L_\rho\otimes_\bC W); \varepsilon)$$ is naturally an augmented DGLA pair.

\begin{thrm}\label{thmRP1} Let $X$ be a smooth manifold of the homotopy type of a finite CW-complex. The natural isomorphism \begin{equation*}\label{isoaug}
\bR(X, n)_{(\rho)}\cong \Def(\Ar(\enmo(L_\rho)); \varepsilon).
\end{equation*}
induces for any $i,k\in \nn$ a natural isomorphism of subfunctors,
$$
(\ti\sV^i_k(W))_{(\rho)}\cong \Def^i_k(\Ar(\enmo(L_\rho)), \Ar(L_\rho\otimes_\bC W); \varepsilon).
$$
\end{thrm}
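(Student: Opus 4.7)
The plan is to mirror the strategy of Theorem \ref{vb1}, but with two modifications: first, replace the Kuranishi/Dolbeault machinery by the de Rham description of local systems, and second, account for the basepoint via the augmentation as in the Goldman--Millson prescription (Theorem \ref{gmaug}). Concretely, fix an Artinian local algebra $A$ with maximal ideal $m$, and write $C = \Ar(\enmo(L_\rho))$, $M=\Ar(L_\rho\otimes_\bC W)$, $C_0=\ker(\varepsilon)$. By definition $\Def^i_k(C,M;\varepsilon)(A)=\Def^i_k(C_0,M)(A)$. The natural isomorphism $\bR(X,n)_{(\rho)}\cong \Def(C;\varepsilon)$ of \cite{gm} is implemented as follows: to a representation $\rho_A:\pi_1(X)\to GL(n,A)$ lifting $\rho$ one associates the local system $L_{\rho_A}$ of free $A$-modules, trivializes its fiber at $x$, and uses the flat connection on $L_\rho\otimes_\bC A$ thereby produced, which differs from the one on $L_\rho$ by an $\enmo(L_\rho)$-valued $1$-form $\omega\in C_0^1\otimes m$ satisfying the Maurer--Cartan equation (the vanishing of $\omega$ at $x$ comes from having fixed the trivialization at $x$). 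Gauge equivalence by $\exp(C_0^0\otimes m)$ corresponds to pointed isomorphism of representations.

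Given such an $\omega$, the key geometric input is the identification of the de Rham complex of the deformed local system with the Aomoto complex: the de Rham complex of $L_{\rho_A}\otimes_\bC W$ is naturally isomorphic, as a complex of sheaves of $A$-modules, to $(\sAr(L_\rho\otimes_\bC W)\otimes_\bC A,\, d+\omega\wedge\cdot)$, and taking global sections yields the Aomoto complex $(M\otimes_\bC A, d_\omega)$ of Definition \ref{cohdef} attached to the pair $(C_0,M)$ and the element $\omega$. Since the sheafy de Rham complex is a fine resolution, $R a_*(L_{\rho_A}\otimes_\bC W)$ is represented by $(M\otimes_\bC A, d_\omega)$ as a complex of $A$-modules. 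This is precisely the local system analogue of the Kuranishi/Dolbeault identification used in the proof of Theorem \ref{vb1}.

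Next, I would compare the two scheme-theoretic definitions via cohomology jump ideals. On the one hand, $\rho_A\in \ti\sV^i_k(W)_{(\rho)}(A)$ is equivalent to $J^i_k(Ra_*(\cL\otimes_\bC W))\otimes_R A = 0$, where $\cL$ is the universal local system on $X\times \bR(X,n)$. By flat base change (Corollary \ref{tensor}), this equals $J^i_k(Ra_*(L_{\rho_A}\otimes_\bC W))$, which by the previous paragraph equals $J^i_k(M\otimes_\bC A, d_\omega)$. On the other hand, by Definition \ref{cohfunctor}, $\omega\in\Def^i_k(C_0,M)(A)$ is exactly the condition $J^i_k(M\otimes_\bC A, d_\omega)=0$. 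This produces the desired equality of subfunctors, exactly as in the final step of the proof of Theorem \ref{vb1}.

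The main obstacle is the first step: rigorously constructing the identification of the de Rham complex of $L_{\rho_A}\otimes_\bC W$ with the Aomoto complex, while keeping track of the basepoint. This requires checking that the ambiguity $\lambda\in \mathfrak{g}\otimes m$ in Goldman--Millson's pointed groupoid acts on the Aomoto complex by the trivialization at $x$ (equivalently, is absorbed when working with $C_0$ instead of $C$), and that the construction is compatible with base change $A\to A'$ so as to descend from objects to isomorphism classes. Once this identification is in place, the cohomology and jump-ideal comparisons are formal, as they were for vector bundles, and the theorem follows.
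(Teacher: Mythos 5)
Your proposal is correct and follows essentially the same route as the paper's proof: identify the Aomoto complex $(M\otimes_\bC A, d_\omega)$ with the de Rham complex of the deformed $A$-local system tensored with $W$ (the paper cites \cite[Section 6]{gm} for exactly this point), deduce that it is quasi-isomorphic to $Ra_*(\cL_s\otimes_\bC W)$ as a complex of $A$-modules, and conclude via Corollary \ref{tensor} that the jump-ideal condition defining $(\ti\sV^i_k(W))_{(\rho)}$ coincides with the one defining $\Def^i_k$. One minor inaccuracy worth noting: since $\varepsilon$ takes values in a Lie algebra concentrated in degree zero, the augmentation ideal satisfies $C_0^1=C^1$, so there is no vanishing condition on $\omega$ at $x$ --- fixing the trivialization at the basepoint only shrinks the gauge group to $\exp(C_0^0\otimes m)$ --- but nothing in your argument actually uses the purported vanishing, so the proof stands.
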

\begin{proof} This is similar to the proof of Theorem \ref{vb1}. Let $A$ be an Artinian local algebra. Given any $s:\spec A\ra \bR(X,n)_{(\rho)}$, denote its image in $\Def(\Ar(\enmo(L_\rho));\varepsilon)$ by $\omega$. Let $\cL_s$ be the induced $A$-local system $\cL\otimes_R A$ on $X$. Then $(\Ar(L_\rho\otimes_\bC W)\otimes_\bC A,d_\omega)$ is the de Rham complex of the $A$-local system $\cL_s\otimes_\bC W$ on $X$ (cf. \cite[Section 6]{gm}). Thus it is quasi-isomorphic with $Ra_*(\cL_s\otimes_\bC W)$ as complexes of $A$-modules.  So
$$
J^i_k(\Ar(L_\rho\otimes_\bC W)\otimes_\bC A,d_\omega)=J^i_k(Ra_*(\cL_s\otimes_\bC W)),
$$
which in turn equals $J^i_k(Ra_*(\cL\otimes_\bC W))\otimes_R A$ by Corollary \ref{tensor}.
\end{proof}

\begin{rmk} 
This theorem generalizes a result of Dimca-Papadima \cite{dp} who proved it for the reduced structure of the cohomology jump loci at the trivial representation, that is, for the germ of $(\ti\sV^i_k)^{red}$ at $\bone$. In \cite{dp}, $X$ is allowed to be a connected CW-complex of finite type by replacing the de Rham complex with Sullivan's  de Rham complex of piecewise $C^\infty$ forms. For simplicity, we opted to leave out this topological refinement.
\end{rmk}

Along with representations of the fundamental group, let us consider the closely-related deformation problem for the associated local systems. The relation at the level of deformations between representations (i.e. local systems with a frame at a fixed point) and local systems is a particular case of the relation between the deformation functors of an augmented DGLA pair and those of the DGLA pair itself, see Theorem \ref{mainaug}.

For now, the assumptions are the same: $X$ is a smooth manifold of the homotopy type of a finite CW-complex and $W$ is a local system on $X$. We consider the deformation problem with cohomology constraints:
$$
(\mb=\cM(X,n), L, \cV^{i}_k(W)),
$$
where $\mb=\mb(X, n)$ is the moduli space of irreducible rank $n$ local systems on $X$ and 
$$
\cV^i_k(W)=\{ L\in \mb\mid \dim_\bC H^i(X,L\otimes_\bC W)\ge k \}.
$$
The natural subscheme structure of $\cV^i_k(W)$ in $\mb$ is defined as follows.  $GL(n, \cc)$ acts on $\bR(X, n)$ by conjugation. Clearly these actions preserve all the cohomology jump loci $\ti\sV^i_k(W)$ of representations. Since $\mb$ is an open subset of the GIT quotient of $\bR(X, n)$ by $GL(n, \cc)$, $\sV^i_k(W)$ can be defined as the the intersection of $\mb$ and the image of $\tilde\sV^i_k(W)$ under the GIT quotient map. 

The argument in Section \ref{holomorphic} works similarly for  moduli spaces of local systems. Since the proofs are essentially the same, we only state the results. 

Let $L$ be in $\mb$. Then $(\Ar(\enmo(L)), \Ar(L\otimes W))$ is naturally a DGLA pair. It is a standard fact that the deformation functor $\Def(\Ar(\enmo(L)))$ is prorepresented by the formal scheme $(\mb)_{(L)}$.

\begin{thrm} Let $X$ be a smooth manifold of the homotopy type of a finite CW-complex.
The natural isomorphism of  functors $$(\mb)_{(L)}\cong\Def(\Ar(\enmo(L)))$$ induces for any $i, k\in \nn$ a natural isomorphism of subfunctors $$\sV^i_k(W)_{(L)}\cong\Def^i_k(\Ar(\enmo(L)), \Ar(L\otimes W)).$$ 
\end{thrm}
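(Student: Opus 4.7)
The proof is closely parallel to those of Theorems \ref{vb1} and \ref{thmRP1}. Let $A$ be an Artinian local algebra with maximal ideal $m$, take $s \colon \spec A \to (\mb)_{(L)}$, and denote by $\omega$ the image of $s$ in $\Def(\Ar(\enmo(L)))(A)$. The plan is to show directly that $s$ factors through the subscheme $\sV^i_k(W)_{(L)}$ if and only if $\omega$ lies in the subfunctor $\Def^i_k(\Ar(\enmo(L)), \Ar(L\otimes W))(A)$. Since $L$ is irreducible it is a stable point of $\mb$, so there is a local Kuranishi family: an $\cO_U$-local system $\cL$ on $X \times U$ for some small open analytic neighborhood $U$ of $L$ in $\mb$, whose fiber over $L$ is $L$ and which is versal for deformations of $L$. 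Write $p_2 \colon X \times U \to U$ for the second projection and let $\cL_s$ be the pullback of $\cL$ along $s$, viewed as an $A$-local system on $X$.

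By the standard dictionary between Maurer--Cartan elements of $\Ar(\enmo(L)) \otimes m$ and $A$-local systems deforming $L$ (as used in the proofs of Theorems \ref{vb1}, \ref{thmRP1} and in \cite{gm}), the Aomoto complex $(\Ar(L \otimes W) \otimes_\bC A, d_\omega)$ is naturally identified with the de Rham complex of $\cL_s \otimes_\bC W$, which in turn is quasi-isomorphic to $\bR (p_2)_*(\cL_s \otimes_\bC W)$ as complexes of $A$-modules. Combined with Corollary \ref{tensor} applied to the flat family $\cL \otimes_\bC W$, this yields equalities of ideals of $A$:
$$J^i_k(\Ar(L \otimes W) \otimes_\bC A, d_\omega) \;=\; J^i_k(\bR (p_2)_*(\cL_s \otimes_\bC W)) \;=\; J^i_k(\bR (p_2)_*(\cL \otimes_\bC W)) \otimes_{\cO_U} A.$$
The right-hand ideal cuts out $\sV^i_k(W) \cap U$ inside $U$, so it vanishes exactly when $s$ factors through $\sV^i_k(W)_{(L)}$; the left-hand ideal vanishes exactly when $\omega \in \Def^i_k(\Ar(\enmo(L)), \Ar(L\otimes W))(A)$. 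The resulting identification is functorial in $A$ and refines the fixed isomorphism $(\mb)_{(L)}\cong \Def(\Ar(\enmo(L)))$.

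The main technical point is reconciling the two possible scheme structures on $\sV^i_k(W)$ near $L$: the one descended via the GIT quotient from $\tilde\sV^i_k(W) \subset \bR(X,n)$ (as described in the text before the theorem), and the one defined directly through the universal family $\cL$ on $X \times U$ used above. Equivalence of these near an irreducible $L$ follows from the fact that $\bR(X,n) \to \mb$ is an \'etale-local principal $PGL(n,\bC)$-bundle at irreducible representations, together with flat base change and the compatibility of determinantal ideals with pullback. Alternatively one can bypass this by choosing a lift $\tilde s \colon \spec A \to \bR(X,n)_{(\rho)}$ of $s$ for a representation $\rho$ with $L_\rho \cong L$, appealing to Theorem \ref{thmRP1}, and observing that the cohomology jump functors are insensitive to the augmentation, since the Aomoto complex is formed without reference to a frame at the base point.
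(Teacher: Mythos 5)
Your proposal is correct and takes essentially the same route as the paper, whose entire proof of this theorem is the remark that ``the argument in Section \ref{holomorphic} works similarly'' --- i.e.\ precisely your main argument: pull back a local universal family of local systems on $X\times U$, identify the Aomoto complex $(\Ar(L\otimes W)\otimes_\bC A, d_\omega)$ with the de Rham complex of $\cL_s\otimes_\bC W$, and conclude by base change of cohomology jump ideals via Corollary \ref{tensor}. Your explicit reconciliation of the GIT-descended scheme structure on $\sV^i_k(W)$ with the one cut out by the local universal family (via the \'etale-local principal $PGL(n,\bC)$-bundle structure of $\bR(X,n)\to\mb$ at irreducible points and flat descent of ideals) is sound and fills in a point the paper leaves implicit.
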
 

In this last result, the condition of irreducibility of the local system can be removed if we replace $(\mb)_{(L)}$ and $\sV^i_k(W)_{(L)}$ by the abstract deformation functors, cf. Remark \ref{rmkAbsHol}. However, we are again focusing on the case leading to formality, for which at least a semi-simplicity condition is crucial. Irreducibility will be used to further simplify the answer of the deformation problem in terms of resonance varieties.

\begin{thrm}\label{lsformal} Let $X$ be a compact K\"{a}hler manifold, $L\in \mb$, and let $W$ be a semi-simple local system on $X$.
Then the DGLA pair $(\Ar(\enmo(L)), \Ar(L\otimes W))$ is formal. 
\end{thrm}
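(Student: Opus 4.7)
The plan is to mirror the proof of Theorem \ref{formal1} on the de Rham side, replacing the Uhlenbeck--Yau input with the Corlette--Simpson existence theorem for harmonic metrics. Since $L$ is irreducible and $W$ is semi-simple, Corlette's theorem supplies a harmonic metric on each; by taking duals and tensor products, $\enmo(L)$ and $L\otimes W$ inherit harmonic metrics as well. Simpson's nonabelian Hodge theory \cite{s1} then decomposes the flat connection on each of these bundles as $D = D' + D''$ with $(D')^2 = (D'')^2 = 0$ and $D'D'' + D''D' = 0$. Because the decomposition is functorial under duals and tensor products, both $D'$ and $D''$ act as graded derivations on the Lie bracket of $\Ar(\enmo(L))$ and on the module action of $\Ar(\enmo(L))$ on $\Ar(L\otimes W)$.

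Next I would introduce the sub-DGLA pair
\[
(K\Ar(\enmo(L)),\, K\Ar(L\otimes W))
\]
consisting of $D'$-closed forms, with differential given by the restriction of $D$ (which on $\ker D'$ coincides with $D''$). The derivation property of $D'$ ensures this is indeed a sub-DGLA pair. The analytic input is Simpson's principle of two types \cite[Lemma~2.2]{s1},
\[
\ker D' \cap \ker D'' \cap (\im D' + \im D'') = \im D' D'',
\]
together with Hodge theory for the harmonic-bundle Laplacian, which shows that every $D$-cohomology class on $\Ar(\enmo(L))$ or $\Ar(L\otimes W)$ has a harmonic representative; such a representative is in particular $D'$-closed and hence lies in $K$. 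This implies that the inclusion
\[
(K\Ar(\enmo(L)),\, K\Ar(L\otimes W)) \hookrightarrow (\Ar(\enmo(L)),\, \Ar(L\otimes W))
\]
is an $\infty$-equivalence on both components. Projection onto harmonic representatives then produces a surjective morphism of DGLA pairs
\[
(K\Ar(\enmo(L)),\, K\Ar(L\otimes W)) \twoheadrightarrow (H^\ubul(\Ar(\enmo(L))),\, H^\ubul(\Ar(L\otimes W)))
\]
(with zero differential on the right), which is also an $\infty$-equivalence by the same lemma. Concatenating these two quasi-isomorphisms yields the desired formality.

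The main technical point will be verifying that every step in the zig-zag is a morphism of DGLA pairs and not merely of underlying complexes. This reduces to checking that Simpson's operators on $\enmo(L)$ and on $L\otimes W$ are obtained from those on $L$ and $W$ through the usual functorial formulae for duals and tensor products, so that $D'$ and $D''$ are graded derivations of both the bracket and the module action, and that the harmonic projection is multiplicative modulo $D'D''$-exact corrections. Granted these compatibilities, the argument is the exact de Rham analogue of the proof of Theorem \ref{formal1}.
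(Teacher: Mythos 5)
Your proposal is correct and takes essentially the same route as the paper: the paper's proof of Theorem \ref{lsformal} simply says it is the proof of Theorem \ref{formal1} with the flat unitary metric replaced by the harmonic metric in the sense of \cite{s1}, and your argument (Corlette--Simpson harmonic metrics on $L$ and $W$, functorially induced on $\enmo(L)$ and $L\otimes W$, the decomposition $D=D'+D''$, the sub-DGLA pair of $D'$-closed forms, and the two $\infty$-equivalences via Lemma 2.2 of \cite{s1}) is exactly that adaptation spelled out. One small simplification: as in the proof of Theorem \ref{formal1}, take the second map of the zig-zag to be the quotient $\ker D'\twoheadrightarrow \ker D'/\im D'\cong H^\ubul$, which is automatically a morphism of DGLA pairs because $D'$ is a graded derivation of the bracket and the module action, thereby avoiding the multiplicativity worry you raise about literal harmonic projection.
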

\begin{proof}
The proof  is essentially the same as the proof of the Theorem \ref{formal1}, except here we need to use the harmonic metric on the flat bundle in the sense of \cite{s1} instead of the flat unitary metric before. 
\end{proof}

In the situation of Theorem \ref{lsformal}, as in Remark \ref{resonance1}, the quadratic cone of $\Ar(\enmo(L))$ is
$$
\sQ(L)=\{\eta\in H^1(X, \enmo(L)) \mid 
\eta\wedge\eta=0\in H^2(X, \enmo(L))\},
$$
and the cohomology resonance varieties of the DGLA pair are
$$
\sR^i_k(L;W)=\{ \eta\in \sQ(L)\mid \dim H^i(H^\ubul(X, L\otimes W), \eta\wedge \cdot)\geq k \},
$$
with the scheme structure of $\sR^i_k(L;W)$ defined using the universal Aomoto complex, as in Definition \ref{dglaresonance}. The condition on the irreducibility of $L$, as opposed to just semi-simplicity, is now used to derive the analog of Corollary \ref{corSVB} for local systems with a similar proof:

\begin{cor}\label{corVLS}{\bf [ = Theorem \ref{thmIrrLS}.]} Let $X$ be a compact K\"{a}hler manifold, $L\in \mb$, and let $W$ be a semi-simple local system on $X$.
The isomorphism of formal schemes
$$
(\mb)_{(L)}\cong \sQ(L)_{(0)}
$$
induces for any $i, k\in \nn$ an isomorphism
$$
\sV^i_k(W)_{(L)}\cong\, \sR^i_k(L;W)_{(0)}. 
$$
\end{cor}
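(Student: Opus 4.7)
The strategy mirrors the proof of Corollary \ref{corSVB} for holomorphic vector bundles, so I would assemble the proof from four inputs already available in the paper.

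First, by Yoneda's lemma, two formal schemes that prorepresent the same functor from $\Art$ to $\Set$ are canonically isomorphic. Hence it suffices to identify the functor that $\sV^i_k(W)_{(L)}$ prorepresents with the functor prorepresented by $\sR^i_k(L;W)_{(0)}$, in a way compatible with the identification of $(\mb)_{(L)}$ with $\sQ(L)_{(0)}$.

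Next, I would invoke the theorem immediately preceding (the local-system analogue of Theorem \ref{vb1}) to translate the left-hand side into a cohomology jump functor:
$$
\sV^i_k(W)_{(L)}\;\cong\;\Def^i_k\!\big(\Ar(\enmo(L)),\,\Ar(L\otimes W)\big),
$$
as a subfunctor of the deformation functor $\Def(\Ar(\enmo(L)))$ which prorepresents $(\mb)_{(L)}$. Then I would apply formality: by Theorem \ref{lsformal}, the DGLA pair $(\Ar(\enmo(L)),\Ar(L\otimes W))$ is formal, so by Theorem \ref{independence} (invariance of cohomology jump functors under homotopy equivalence of pairs) we have
$$
\Def^i_k\!\big(\Ar(\enmo(L)),\,\Ar(L\otimes W)\big)\;\cong\;\Def^i_k\!\big(H^\ubul(\Ar(\enmo(L))),\,H^\ubul(\Ar(L\otimes W))\big).
$$

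To conclude, I would apply Corollary \ref{cohformal}, which states that for a formal DGLA pair $(C,M)$ with $[H^0(C),H^1(C)]=0$, the cohomology jump functor $\Def^i_k(C,M)$ is prorepresented by ${}^h\sR^i_k(C,M)_{(0)}$. Here ${}^h\sR^i_k(\Ar(\enmo(L)),\Ar(L\otimes W))$ is by definition $\sR^i_k(L;W)$, so this yields the desired isomorphism. The one nontrivial verification to be checked is the bracket condition $[H^0(\Ar(\enmo(L))),H^1(\Ar(\enmo(L)))]=0$; this is the only place the irreducibility hypothesis (rather than just semi-simplicity) on $L$ is used: since $L$ is irreducible, $H^0(X,\enmo(L))=\bC\cdot \id_L$ by Schur's lemma, and $[\id_L,-]=0$ on $H^1$. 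The compatibility of the isomorphism $\sV^i_k(W)_{(L)}\cong \sR^i_k(L;W)_{(0)}$ with the isomorphism of ambient formal schemes $(\mb)_{(L)}\cong \sQ(L)_{(0)}$ is automatic from the naturality of the subfunctor inclusions at each step. I do not anticipate any serious obstacle, as all the real work—finiteness of the Aomoto complex, formality under the harmonic-metric argument, and the $q$-equivalence invariance of cohomology jump ideals—has already been done.
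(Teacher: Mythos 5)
Your proposal is correct and follows essentially the same route as the paper: the paper derives this corollary by the ``similar proof'' to Corollary \ref{corSVB}, namely Yoneda's lemma plus the local-system analogue of Theorem \ref{vb1}, formality (Theorem \ref{lsformal}), and Corollary \ref{cohformal}, with the bracket condition $[H^0,H^1]=0$ verified exactly as you do via irreducibility and Schur's lemma. Your explicit invocation of Theorem \ref{independence} merely unpacks what Corollary \ref{cohformal} already uses internally, so there is no substantive difference.
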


The analogs of the Corollaries \ref{corQuad} and \ref{corRk1} also hold.

\begin{rmk}\label{rmkPS}
Corollary \ref{corVLS} for rank one local systems $E$ and $W=\bC_X$ also follows from the strong linearity theorem of Popa-Schnell namely \cite[Theorem 3.7]{PS}. In fact, our approach gives a different proof of the strong linearity theorem, at least of the fact that the two complexes appeared in \cite[Theorem 3.7]{PS} are quasi-isomorphic (in the derived category) after restricting to the formal neighborhood of the origin. One can argue as follows. For an Artinian local algebra $A$ and a map from $\spec(A)$ to the formal neighborhood, one can restrict the two complexes in \cite[Theorem 3.7]{PS} to $\spec(A)$. After the restriction, the two complexes can be connected to another one via a zig-zag  using the proof of Theorem \ref{independence2} and the proof of Theorem \ref{formal1}. The two maps in the zig-zag are quasi-isomorphisms. Since the zig-zag is canonical, it allows us to take inverse limit for all such $A$. After taking limit, we obtain two quasi-isomorphisms which connect the two complexes on the formal neighborhood of origin. Note that the proof of \cite[Theorem 3.7]{PS} gives a stronger statement: the quasi-isomorphism is obtained by one single map. The proof we sketched gives that the quasi-isomorphism is obtained by one zig-zag. However, this suffices for the application to cohomology jump loci.
\end{rmk}

Now, coming back to representations, by Theorem \ref{mainaug} and Theorem \ref{lsformal} we have the following:

\begin{cor}\label{rep1} Let $X$ be a compact K\"ahler manifold. Let $\rho\in \bR(X, n)$ be a semi-simple representation, and let $W$ be a semi-simple local system. Then there is an isomorphism of formal schemes
$$\ti\sV^i_k(W)_{(\rho)}\cong (\sR^i_k(L_\rho,W)\times \mathfrak{g}/\mathfrak{h})_{(0)}$$
where  $\mathfrak{h}=\varepsilon(H^0(X,\enmo (L_\rho)))$.
\end{cor}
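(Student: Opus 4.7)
The plan is to combine Theorem \ref{thmRP1}, which identifies $\ti\sV^i_k(W)_{(\rho)}$ with the cohomology jump functor $\Def^i_k(\Ar(\enmo(L_\rho)), \Ar(L_\rho\otimes_\bC W); \varepsilon)$ of the augmented DGLA pair, with the prorepresentability statement of Theorem \ref{mainaug}. Once both are in place, the target formal scheme $(\sR^i_k(L_\rho,W)\times \mathfrak{g}/\mathfrak{h})_{(0)}$ appears on the nose, since the cohomology resonance variety ${}^h\sR^i_k$ of $(\Ar(\enmo(L_\rho)), \Ar(L_\rho\otimes_\bC W))$ is exactly $\sR^i_k(L_\rho,W)$ by the local-system analog of Remark \ref{resonance1}.

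The substantive work is in verifying the four hypotheses of Theorem \ref{mainaug} for $(C,M;\varepsilon)=(\Ar(\enmo(L_\rho)), \Ar(L_\rho\otimes_\bC W); \varepsilon)$. Formality of $C$ (in particular $1$-formality) and of the pair $(C,M)$ (in particular $\infty$-formality, hence $q$-formality for every $q\ge 1$) will be obtained from Theorem \ref{lsformal}. Strictly speaking Theorem \ref{lsformal} is stated for $L\in\mb$, i.e.\ irreducible, but its proof only uses the existence of a harmonic metric on the flat bundle together with the $K\ddot{a}hler$-type identities between the associated Laplacians; by Simpson's theorem both ingredients are available for any semi-simple flat bundle, so the formality extends verbatim to our semi-simple $L_\rho$ and to $L_\rho\otimes_\bC W$.

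For the augmentation $\varepsilon:\Ar(\enmo(L_\rho))\to \mathfrak{g}=\enmo(L_\rho)|_x$ given by evaluation at $x$, surjectivity of $\varepsilon^0$ is immediate: any vector in the fiber at $x$ extends to a smooth global section of $\enmo(L_\rho)$ using a bump function supported in a trivializing neighborhood of $x$. Injectivity of $\varepsilon^0$ on $H^0(C)=H^0(X,\enmo(L_\rho))$ follows because $X$ is connected and a flat endomorphism that vanishes at $x$ vanishes identically by parallel transport; this works for the semi-simple $L_\rho$ with no need for irreducibility.

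With all hypotheses verified, Theorem \ref{mainaug} immediately prorepresents $\Def^i_k(\Ar(\enmo(L_\rho)), \Ar(L_\rho\otimes_\bC W); \varepsilon)$ by $(\sR^i_k(L_\rho,W)\times \mathfrak{g}/\mathfrak{h})_{(0)}$, and Theorem \ref{thmRP1} transports this to $\ti\sV^i_k(W)_{(\rho)}$, as desired. The only point requiring genuine care is the extension of the formality proof in Theorem \ref{lsformal} from irreducible to semi-simple local systems; this is essentially a bookkeeping check that Simpson's harmonic metric suffices in place of the unitary flat metric, and presents no real obstacle.
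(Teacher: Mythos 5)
Your proposal is correct and follows essentially the same route as the paper: combine Theorem \ref{thmRP1} with Theorem \ref{mainaug}, using the formality from Theorem \ref{lsformal}, after verifying the two hypotheses on the augmentation (the paper checks injectivity of $\varepsilon^0$ on $H^0(\Ar(\enmo(L_\rho)))$ by splitting $L_\rho$ into simple summands and noting $H^0$ is spanned by the identities $\id_{L_j}$, whereas you argue by parallel transport, but this is the same verification in substance). Your explicit observation that Theorem \ref{lsformal}, stated only for irreducible $L\in\mb$, extends to the semi-simple $L_\rho$ because Simpson's harmonic metric exists for any semi-simple flat bundle is a point the paper passes over silently when citing that theorem, and it is indeed needed.
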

\begin{proof}
To apply Theorem \ref{mainaug}, we only need to check the assumptions in Theorem \ref{gmaug}. It is obvious that $A^0_{\textrm{DR}}(\enmo(L_\rho))\to \enmo(L_\rho)|_{x}$ is surjective. Since $\rho$ is semi-simple, or equivalently $L_\rho$ is a semi-simple local system, we can assume it splits into a direct sum of simple local systems $L_\rho=\bigoplus_{j\in J} L_j$. Then $H^0(\Ar(\enmo(L_\rho)))$ is generated by $\id_{L_j}$, $j\in J$. Therefore, $\epsilon^0: H^0(\Ar(\enmo(L_\rho)))\to \enmo(L_\rho)|_{x}$ is injective. 
\end{proof}

We can give another equivalent description of the cohomology resonance variety $\sR^i_k(L_\rho,W)$ and the affine space $\mathfrak{g}/\mathfrak{h}$. It is well-known that the tangent space of $\bR(X, n)$ at the point $\rho$ is isomorphic to the vector space of 1-cocycles $Z^1(\pi_1(X), \mathfrak{gl}(n, \cc)_{\ad \rho})$, see \cite{gm}. Moreover, we have the following isomorphism,
\begin{equation}\label{eqZ1}
Z^1(\pi_1(X), \mathfrak{gl}(n, \cc)_{\ad \rho})/B^1(\pi_1(X), \mathfrak{gl}(n, \cc)_{\ad \rho})\cong H^1(X, \enmo(L_\rho)).
\end{equation}
In fact, one can easily check that $B^1(\pi_1(X), \mathfrak{gl}(n, \cc)_{\ad \rho})\cong \mathfrak{g}/\mathfrak{h}$.  For 1-cocycle 
$\eta$ in the vector space $Z^1(\pi_1(X), \mathfrak{gl}(n, \cc)_{\ad \rho})$, denote the image in $H^1(X, \enmo(L_\rho))$ under the above isomorphism by $\bar\eta$. 

\begin{defn}\label{defnQrho}
Define the \textbf{quadratic cone} of $\rho$ to be
$$\sQ(\rho)=\{\eta\in Z^1(\pi_1(X), \mathfrak{gl}(n, \cc)_{\ad \rho})\,|\,\bar\eta\wedge\bar\eta=0 \in H^2(X, \enmo(L_\rho))\}.$$
Define the \textbf{twisted resonance varieties} of $\rho$ to be 
$$
\sR^i_k(\rho,W)=\{\eta\in \sQ(\rho)\,|\, \dim H^i(H^\ubul(X, L_\rho\otimes_\bC W), \bar\eta\wedge\cdot)\geq k\}. 
$$
As in Definition \ref{subscheme}, using the universal family, we can give $\sR^i_k(\rho,W)$ a subscheme structure. 
\end{defn}

Simpson \cite{s1} showed that there is an isomorphism of formal schemes
$
\mathbf{R}(X, n)_{(\rho)}\cong \sQ(\rho)_{(0)}
$ for a semi-simple representation $\rho$. We generalize this to $\ti\sV^i_k(W)_{(\rho)}$. First, we need the following:

\begin{lemma}\label{rep2} Let $X$ be a compact K\"ahler manifold.
There is a non-canonical isomorphism of schemes
$$
H^1(X,\enmo(L_\rho))\times \mathfrak{g}/\mathfrak{h}\cong Z^1(\pi_1(X), \mathfrak{gl}(n, \cc)_{\ad \rho}).
$$
This induces an isomorphism of subshemes
$$
\sR^i_k(L_\rho,W)\times \mathfrak{g}/\mathfrak{h}\cong \sR^i_k(\rho,W)
$$
if $\rho$ and $W$ are semi-simple.
\end{lemma}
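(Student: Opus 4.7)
The starting point is the short exact sequence of $\bC$-vector spaces
$$
0 \longrightarrow B^1(\pi_1(X), \mathfrak{gl}(n,\bC)_{\ad\rho}) \longrightarrow Z^1(\pi_1(X), \mathfrak{gl}(n,\bC)_{\ad\rho}) \longrightarrow H^1(X,\enmo(L_\rho)) \longrightarrow 0
$$
provided by equation (\ref{eqZ1}), together with the identification $B^1 \cong \mathfrak{g}/\mathfrak{h}$ noted just before the lemma. Since any short exact sequence of $\bC$-vector spaces splits, picking any linear section of the quotient map $Z^1 \to H^1(X,\enmo(L_\rho))$ yields a (non-canonical) linear isomorphism
$$
H^1(X,\enmo(L_\rho)) \times \mathfrak{g}/\mathfrak{h} \; \stackrel{\sim}{\longrightarrow}\; Z^1(\pi_1(X), \mathfrak{gl}(n,\bC)_{\ad\rho}),
$$
and this is automatically an isomorphism of affine schemes. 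The semi-simplicity of $\rho$ and $W$ and the K\"ahler hypothesis on $X$ play no role at this first stage; they will only enter in identifying the subschemes correctly.

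For the second isomorphism, the key observation is that both $\sQ(\rho)$ and $\sR^i_k(\rho,W)$ (Definition \ref{defnQrho}) are defined by conditions that depend on $\eta \in Z^1$ only through its image $\bar\eta \in H^1(X,\enmo(L_\rho))$. Set-theoretically this is immediate: the bracket condition $\bar\eta \wedge \bar\eta = 0$ and the cohomology jump condition on $(H^\ubul(X, L_\rho \otimes W), \bar\eta \wedge \cdot)$ involve only $\bar\eta$. Hence, under the chosen splitting, $\sQ(\rho)$ corresponds to $\sQ(L_\rho)\times \mathfrak{g}/\mathfrak{h}$ and $\sR^i_k(\rho,W)$ corresponds to $\sR^i_k(L_\rho,W)\times \mathfrak{g}/\mathfrak{h}$.

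To promote this to an isomorphism of subschemes (not just of point sets), the plan is to realize the scheme structure of $\sR^i_k(\rho,W)$ in $Z^1$ as the pullback under the surjection $p : Z^1 \to H^1(X,\enmo(L_\rho))$ of the scheme structure of $\sR^i_k(L_\rho,W) \subset \sQ(L_\rho) \subset H^1(X,\enmo(L_\rho))$. Concretely, the universal Aomoto complex on $\sQ(L_\rho)$ given by $(H^\ubul(X,L_\rho\otimes W)\otimes_\bC \sO_{\sQ(L_\rho)}, \zeta \wedge \cdot)$ (Definition \ref{dglaresonance}) is a complex of flat $\sO_{\sQ(L_\rho)}$-modules; its pullback along $p$ is precisely the universal complex used to define the scheme structure on $\sR^i_k(\rho,W)$ via the universal family of Definition \ref{defnQrho}. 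Corollary \ref{tensor} on compatibility of cohomology jump ideals with flat base change then gives
$$
J^i_k\bigl(p^*(H^\ubul(X,L_\rho\otimes W)\otimes \sO_{\sQ(L_\rho)}, \zeta\wedge\cdot)\bigr) \;=\; J^i_k\bigl(H^\ubul(X,L_\rho\otimes W)\otimes \sO_{\sQ(L_\rho)}, \zeta\wedge\cdot\bigr)\cdot \sO_{p^{-1}(\sQ(L_\rho))},
$$
so the subscheme $\sR^i_k(\rho,W)\subset Z^1$ is exactly $p^{-1}(\sR^i_k(L_\rho,W))$, which under the splitting becomes $\sR^i_k(L_\rho,W)\times \mathfrak{g}/\mathfrak{h}$.

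The main technical point to get right is this last identification of scheme structures: one must verify that the universal complex used in Definition \ref{defnQrho} to give $\sR^i_k(\rho,W)$ its scheme structure really coincides, after pulling back along the linear map $p$, with the universal complex of Definition \ref{dglaresonance} applied to $(\Ar(\enmo(L_\rho)),\Ar(L_\rho\otimes W))$. This ultimately comes down to the standard fact that the map of tangent spaces $Z^1 \to H^1$ is induced by the natural map from group cochains to de Rham cohomology, and that cup product with a 1-cocycle agrees, after passing to cohomology, with wedge product with its de Rham class; once this is in place, base change of determinantal ideals closes the argument.
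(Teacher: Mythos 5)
Your proof is correct and follows essentially the same route as the paper: both arguments reduce to splitting the exact sequence $0\to B^1\to Z^1\to H^1(X,\enmo(L_\rho))\to 0$ for the first claim, and for the second to the observation that the scheme structure on $\sR^i_k(\rho,W)$ is induced along $p: Z^1\to H^1(X,\enmo(L_\rho))$ by the same universal Aomoto complex that defines $\sR^i_k(L_\rho,W)$. The only cosmetic difference is direction: the paper descends $\sR^i_k(\rho,W)$ to the quotient $Z^1/B^1$ using its invariance under the $B^1$-action and then matches universal complexes, while you pull $\sR^i_k(L_\rho,W)$ back along $p$ and invoke Corollary \ref{tensor} for base change of the jump ideals --- the same identification either way.
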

\begin{proof}
The first claim follows from  (\ref{eqZ1}) and the remark after. Now $B^1(\pi_1(X), \mathfrak{gl}(n, \cc)_{\ad \rho})$ acts on $Z^1(\pi_1(X), \mathfrak{gl}(n, \cc)_{\ad \rho})$. By definition, $ \sR^i_k(\rho,W)$ is invariant under this action. Therefore, $ \sR^i_k(\rho,W)$ is equal to the pull-back of some closed subscheme $\bar \sR^i_k(\rho,W)$ of the quotient
$Z^1(\pi_1(X), \mathfrak{gl}(n, \cc)_{\ad \rho})/B^1(\pi_1(X), \mathfrak{gl}(n, \cc)_{\ad \rho})$. Under the isomorphism (\ref{eqZ1}) one can easily see that $\sR^i_k(L_\rho,W)$ and $\bar \sR^i_k(\rho,W)$ are defined by the same universal  complexes, and hence they are isomorphic. The conclusion follows.
\end{proof}

From Corollary \ref{rep1} and Lemma \ref{rep2} we get:
\begin{theorem}\label{mainkahler2}{\bf [= Theorem \ref{thmRPP}.]} Let $X$ be a compact K\"ahler manifold.
Let $\rho\in \bR(X, n)$ be a semi-simple representation, and let $W$ be a semi-simple local system. Then the isomorphism of formal schemes 
$$
\mathbf{R}(X, n)_{(\rho)}\cong \sQ(\rho)_{(0)}
$$ induces an isomorphism
$$
\ti\sV^i_k(W)_{(\rho)}\cong \sR^i_k(\rho,W)_{(0)}. 
$$
\end{theorem}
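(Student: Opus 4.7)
The plan is to combine the preceding augmented-DGLA-pair machinery with the identification between group cocycles and de Rham cohomology, in essentially two moves. First, I would invoke Theorem \ref{thmRP1} to replace the geometric problem with a problem in deformation theory: the germ $\ti\sV^i_k(W)_{(\rho)}$ prorepresents $\Def^i_k(\Ar(\enmo(L_\rho)),\Ar(L_\rho\otimes_\bC W);\varepsilon)$, where $\varepsilon$ is the augmentation obtained by restriction at the base point $x$.

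Second, I would apply Theorem \ref{mainaug} to this augmented DGLA pair. For this, the hypotheses of Theorem \ref{gmaug} must be verified: the surjectivity of $\varepsilon^0:A^0_{\rm DR}(\enmo(L_\rho))\to\mathfrak{g}$ is immediate from the definition, and the injectivity of $\varepsilon^0|_{H^0}$ uses that $L_\rho$ is semi-simple, decomposing $L_\rho=\bigoplus_{j\in J}L_j$ into simples so that $H^0(\Ar(\enmo(L_\rho)))$ is spanned by the projections $\id_{L_j}$, which are linearly independent at $x$. Formality of the pair is provided by Theorem \ref{lsformal}, again using the semi-simplicity of both $L_\rho$ and $W$ (via harmonic metrics). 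Theorem \ref{mainaug} then yields Corollary \ref{rep1}, namely
$$
\ti\sV^i_k(W)_{(\rho)}\cong \bigl(\sR^i_k(L_\rho,W)\times \mathfrak{g}/\mathfrak{h}\bigr)_{(0)},
$$
compatible with the analogous identification $\bR(X,n)_{(\rho)}\cong(\sQ(L_\rho)\times\mathfrak{g}/\mathfrak{h})_{(0)}$ obtained by setting $i=0$, $k=0$.

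Third, I would invoke Lemma \ref{rep2}, which produces a (non-canonical) isomorphism of schemes $H^1(X,\enmo(L_\rho))\times\mathfrak{g}/\mathfrak{h}\cong Z^1(\pi_1(X),\mathfrak{gl}(n,\bC)_{\ad\rho})$ and shows that under this isomorphism $\sR^i_k(L_\rho,W)\times\mathfrak{g}/\mathfrak{h}$ is identified with $\sR^i_k(\rho,W)$, because the latter is by construction $B^1$-invariant and pulled back from the quotient. Composing the isomorphism from step two with the isomorphism from Lemma \ref{rep2} then gives the desired
$$
\ti\sV^i_k(W)_{(\rho)}\cong \sR^i_k(\rho,W)_{(0)},
$$
and specializing to $i=0$, $k=0$ recovers Simpson's isomorphism $\bR(X,n)_{(\rho)}\cong \sQ(\rho)_{(0)}$, which shows that the new isomorphism is induced by the latter in the sense claimed.

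The main obstacle I expect is bookkeeping rather than substance: verifying that the isomorphism of Lemma \ref{rep2}, which is defined globally in linear algebra, is compatible at the formal germ level with the isomorphism coming from the deformation functor of the augmented pair. This amounts to checking that the natural transformation from $\Def(C;\varepsilon)\to(\sQ(C)\times\mathfrak{g}/\varepsilon(H^0(C)))_{(0)}$ in diagram (\ref{comm}) of Theorem \ref{mainaug}, composed with the identification of the target with $\sQ(\rho)_{(0)}$, coincides with Simpson's isomorphism; once this is checked for the unconstrained problem, the refinement for the cohomology jump strata follows automatically from the commutativity of (\ref{comm}).
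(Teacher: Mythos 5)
Your proposal is correct and follows essentially the same route as the paper: Theorem \ref{thmRP1} to pass to the augmented DGLA pair, formality (Theorem \ref{lsformal}) plus Theorem \ref{mainaug} with the hypotheses of Theorem \ref{gmaug} verified via semi-simplicity (this is exactly Corollary \ref{rep1}), and then Lemma \ref{rep2} to identify $\sR^i_k(L_\rho,W)\times\mathfrak{g}/\mathfrak{h}$ with $\sR^i_k(\rho,W)$. Your closing remark on compatibility via diagram (\ref{comm}) is the same mechanism the paper relies on, just spelled out more explicitly.
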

When $k=\dim H^i(X, L_\rho)$, the resonance variety $\sR^i_k(\rho,W)$ is equal to the intersection of the quadratic cone $\sQ(\rho)$ and a linear subspace of $Z^1(\pi_1(X), \mathfrak{gl}(n, \mathbb{C})_{\textrm{ad}\rho})$, see the proof of Corollary \ref{corQuad}. Hence, $\sR^i_k(\rho,W)$ is also a quadratic cone. Thus we have the following corollary.

\begin{cor}\label{corQRep} Let $X$ be a compact K\"ahler manifold.
Let $\rho\in \bR(X, n)$ be a semi-simple representation,  and let $W$ be a semi-simple local system. Suppose $k=\dim H^i(X, L_\rho)$. Then $\ti\sV^i_k(W)$ has quadratic singularities at $\rho$. 
\end{cor}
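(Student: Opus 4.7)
The plan is to combine Theorem \ref{mainkahler2} with the explicit linear-algebraic description of the resonance variety recalled in the paragraph just above. Concretely, Theorem \ref{mainkahler2} gives an isomorphism of formal germs
\[
\ti\sV^i_k(W)_{(\rho)}\cong \sR^i_k(\rho,W)_{(0)},
\]
so it suffices to show that $\sR^i_k(\rho,W)$ is itself a quadratic cone at the origin under the hypothesis on $k$; the quadratic-singularities statement for $\ti\sV^i_k(W)$ then follows immediately, with Artin approximation letting us transfer the formal isomorphism to analytic or \'etale-algebraic germs as described in the subsection on analytic and \'etale local germs.

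To prove that $\sR^i_k(\rho,W)$ is a quadratic cone, I would follow the pattern of the proof of Corollary \ref{corQuad}. By Lemma \ref{rep2} there is an isomorphism $\sR^i_k(\rho,W)\cong \sR^i_k(L_\rho,W)\times \mathfrak{g}/\mathfrak{h}$, so, since a product of a quadratic cone with an affine space is a quadratic cone, it is enough to treat the factor $\sR^i_k(L_\rho,W)\subset \sQ(L_\rho)\subset H^1(X,\enmo(L_\rho))$. By Definition \ref{dglaresonance}, this subscheme is defined by the cohomology jump ideal $J^i_k$ of the universal cohomology Aomoto complex $(H^\ubul(X,L_\rho\otimes W)\otimes_{\bC}\sO_{\sQ(L_\rho)},\,\zeta\wedge\cdot\,)$, whose differentials are matrices of linear forms in the coordinates on $H^1(X,\enmo(L_\rho))$. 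The assumption that $k$ equals the dimension of $H^i$ at the origin forces the ideal of $\mathrm{rank}(F^i)-k+1$ minors produced by Definition-Proposition \ref{defprop} to collapse to the ideal of $1\times 1$ minors, i.e.\ to the ideal generated by the entries themselves. Hence $\sR^i_k(L_\rho,W)$ is the scheme-theoretic intersection of $\sQ(L_\rho)$ with a linear subspace $V\subset H^1(X,\enmo(L_\rho))$, which is again a quadratic cone.

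The only delicate point is verifying this collapse of the determinantal ideal to its $1\times 1$ minors, which is where the precise value of $k$ is used; this is exactly the argument already employed in the holomorphic vector bundle case, and the presence of $\mathfrak{g}/\mathfrak{h}$ and of $W$ does not interfere with it. Assembling the pieces, $\sR^i_k(\rho,W)$ is a quadratic cone in $Z^1(\pi_1(X),\mathfrak{gl}(n,\cc)_{\ad\rho})$, its completion at $0$ is the formal germ of a quadratic cone at its vertex, and the isomorphism of Theorem \ref{mainkahler2} then transports this quadratic local model to $\ti\sV^i_k(W)$ at $\rho$, yielding the corollary.
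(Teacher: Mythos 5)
Your proposal is correct and takes essentially the same approach as the paper: Theorem \ref{mainkahler2} reduces everything to showing that $\sR^i_k(\rho,W)$ is a quadratic cone, which follows because the hypothesis on $k$ collapses the cohomology jump ideal of the universal Aomoto complex to its ideal of $1\times 1$ minors (the argument of Corollary \ref{corQuad}), exhibiting the resonance variety as the intersection of the quadratic cone with a linear subspace. The only cosmetic difference is that you pass through the product decomposition of Lemma \ref{rep2} explicitly, while the paper states the linear-section description directly inside $Z^1(\pi_1(X), \mathfrak{gl}(n,\cc)_{\ad\rho})$; these amount to the same thing, since pulling back a linear section of $\sQ(L_\rho)$ under the linear quotient map is exactly taking the product with $\mathfrak{g}/\mathfrak{h}$.
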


\section{Stable Higgs bundles}\label{secHB}
According to nonabelian Hodge theory due to Simpson, given a smooth projective complex variety $X$, one can consider three moduli spaces and the cohomology jump loci in them: $\mb(X, n)$, $\mdr(X, n)$, $\mdol(X, n)$, denoting the moduli spaces of irreducible local systems of rank $n$, stable flat bundles of rank $n$, and stable Higgs bundles with vanishing Chern classes of rank $n$, respectively, see \cite{s3}. Although $\mb (X,n)$ can be constructed for any topological space with finitely generated fundamental group, the assumption that $X$ is smooth projective is essential for the construction of $\mdr(X, n)$ and $\mdol(X, n)$. Since $\mb(X, n)$ and $\mdr(X, n)$ are isomorphic as analytic spaces, and since the isomorphism induces isomorphisms on the cohomology jump loci, the deformation problems with cohomology constraints are same for irreducible local systems and stable flat bundles. 

We consider now the deformation problem with cohomology constraints
$$
(\mdol=\mdol(X,n), E=(E,\theta), \sV^i_{k}(F))
$$
where $F=(F,\phi)$ is a poly-stable Higgs bundle with vanishing Chern classes and
\begin{align*}
\sV^i_{k}(F) =\{  (E, \theta)  \in \mdol\mid \dim\hh^i(X, (E\otimes_{\cO_X}F\otimes_{\cO_X} \Omega^\ubul_X, \theta\otimes 1+1\otimes\phi))\ge k\}.
\end{align*}
The subscheme structure of $\sV^i_{k}(F)$ is defined as follows. Fix a base point $x\in X$. $\mdol$ is the GIT quotient by $GL(n, \cc)$ of a fine moduli space $\bR_{\textrm{Dol}}(X, x, n)$ of rank $n$ stable Higgs bundles $(E, \theta)$ on $X$ with vanishing Chern classes together with a basis $\beta: E|_{x}\to \cc^n$. On $\bR_{\textrm{Dol}}(X, x, n)$, there is a universal family of Higgs bundles. Using this universal Higgs bundles, we can define cohomology jump loci in $\bR_{\textrm{Dol}}(X, x, n)$ as a closed subschemes. These cohomology jump loci are invariant under the $GL(n, \cc)$ action. Thus we can define their image under the quotient map to be $\sV^i_{k}(F)$, which has a closed subscheme structure. 

This deformation problem with cohomology constraints is parallel to the case of irreducible local systems. We will only state the main theorem. We leave all the statements and the proofs of the other corollaries to the reader. 

\begin{defn}
For a Higgs bundle $(\cF, \psi)$ we define the {\bf Higgs complex} as the complex of global $\cF$-valued $C^\infty$-forms with differential $\bar\partial+\psi$. We denote this complex by $(\Ah (\cF),\bar\partial+\psi)$, or simply $\Ah(\cF)$.
\end{defn}

For a Higgs bundle $(E, \theta)$ in $\mdol$, we have a DGLA pair
$$
(\Ah(\enmo(E)), \Ah(E\otimes_{\cO_X}F)),
$$
where the Higgs field on the locally free $\cO_X$-module $\enmo(E)$ is 
$[\theta,\cdot]$, and the Higgs field on $E\otimes _{\cO_X}F$ is as in the complex from the definition of $\sV^i_k(F)$. The standard fact is that the formal scheme of $\mdol$ at ${(E, \theta)}$ prorepresents the functor $\Def(\Ah(\enmo(E)))$, see \cite{Mart}.

\begin{thrm}
The natural isomorphism of functors $$(\mdol)_{(E, \theta)}\cong\Def(\Ah(\enmo(E)))$$ induces for any $i, k\in \nn$ a natural isomorphism of subfunctors $$(\sV^i_{k}(F))_{(E, \theta)}=\Def^i_k(\Ah(\enmo(E)), \Ah(E\otimes F)).$$ The DGLA pair $(\Ah(\enmo(E)), \Ah(E\otimes F))$ is formal, and hence its quadratic cone and cohomology resonance variety determine the formal germs at $(E,\theta)$ of $\mdol$ and $\sV^i_k(F)$.
\end{thrm}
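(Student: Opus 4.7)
The plan is to follow the template of Theorems \ref{vb1}, \ref{formal1} for holomorphic vector bundles and Theorems \ref{thmRP1}, \ref{lsformal} for local systems, adapted to the Higgs setting. Three things must be checked: (i) identification of the cohomology-constrained deformation functor with the cohomology jump functor of the DGLA pair; (ii) formality of the DGLA pair; (iii) the resulting concrete description of the formal germs via the quadratic cone and cohomology resonance varieties of the DGLA pair in the sense of Section \ref{secQR}.

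For (i), I would fix an Artinian local algebra $A$, a morphism $s \colon \spec A \to (\mdol)_{(E,\theta)}$, and its image $\omega \in \Def(\Ah(\enmo(E)))(A)$. Pulling back the universal Higgs bundle on $\bR_{\textrm{Dol}}(X, x, n)$ gives a family $(E_s, \theta_s)$ on $X_A := X \times \spec A$, and by direct construction the Aomoto complex $(\Ah(E \otimes F) \otimes_\cc A, d_\omega)$ is the global Higgs complex of $E_s \otimes p_1^{-1} F$ with differential $\bar\partial + \theta_s \otimes 1 + 1 \otimes \phi$, hence quasi-isomorphic as a complex of $A$-modules to $\bR p_{2*}$ of the corresponding complex of sheaves on $X_A$. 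Combining with Corollary \ref{tensor} (applied to the flat family over $\sO_{\bR_{\textrm{Dol}}}$) and base change through the GIT quotient map identifies the cohomology jump ideals on both sides, exactly as in the proof of Theorem \ref{vb1}.

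For (ii), the argument mirrors Theorem \ref{lsformal}, using harmonic metrics from Simpson's nonabelian Hodge theory \cite{s1}: since $(E, \theta)$ is stable and $(F, \phi)$ is poly-stable, each with vanishing Chern classes, both carry harmonic metrics, and hence so do $\enmo(E)$ and $E \otimes F$ with the induced Higgs fields. Writing $D'' = \bar\partial + \theta$ and $D' = \partial + \theta^*$ for the metric-adjoint operator, the K\"ahler identities give $D'D'' + D''D' = 0$ together with the principle of two types: every $D''$-cohomology class has a $D'$-closed, in fact harmonic, representative. Letting $K\Ah(\enmo(E))$ and $K\Ah(E \otimes F)$ denote the $D'$-closed subcomplexes, this yields the zig-zag of $\infty$-equivalences of DGLA pairs
\begin{equation*}
(\Ah(\enmo(E)), \Ah(E\otimes F)) \longleftarrow (K\Ah(\enmo(E)), K\Ah(E\otimes F)) \longrightarrow (H^\ubul(\Ah(\enmo(E))), H^\ubul(\Ah(E\otimes F))),
\end{equation*}
where the first map is the inclusion and the second sends a $D'$-closed form to its $D''$-cohomology class; both are quasi-isomorphisms by the harmonic-representative argument of \cite[Lemma 2.2]{s1}.

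Finally, (iii) follows from Corollary \ref{cohformal}, whose hypothesis $[H^0, H^1] = 0$ reduces to showing $H^0(\Ah(\enmo(E))) = \cc \cdot \id_E$; this is stability of $(E, \theta)$ in its Higgs-theoretic form, which guarantees that a stable Higgs bundle has only scalar Higgs endomorphisms. The main obstacle is organisational rather than substantive: one must verify that the harmonic-metric formality argument, originally developed by Simpson for the DGLA $\Ah(\enmo(E))$ alone, extends compatibly to the $\enmo(E)$-module structure on $\Ah(E \otimes F)$. This is ultimately straightforward because the harmonic metric on $E \otimes F$ is induced from those on $E$ and $F$ and is compatible with the $\enmo(E)$-action, so the K\"ahler identities and the principle of two types transfer verbatim to the module $\Ah(E \otimes F)$.
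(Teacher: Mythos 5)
Your proposal is correct and follows exactly the route the paper intends: the paper itself gives no written proof for this theorem, stating only that the problem is ``parallel to the case of irreducible local systems'' and leaving details to the reader, and your three steps (universal-family identification with base change as in Theorem \ref{vb1}, formality via Simpson's harmonic metrics as in Theorems \ref{formal1} and \ref{lsformal}, and the quadratic-cone conclusion via Corollary \ref{cohformal} using simplicity of the stable Higgs bundle to get $[H^0,H^1]=0$) are precisely the intended adaptation. No gaps.
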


\section{Other consequences of formality}\label{secIneq}

In this section we point out how the formality of a DGLA pair $(C,M)$ has implications on the possible shape of the Betti numbers of $M$ and on the geometry of the cohomology resonance varieties ${}^h\sR^i_k(C,M)$.

Let $(C,M)$ be a formal DGLA pair. We will use the following simplifying notation in this section:
\begin{align*}
Q&=Q(C), \\
\sR^i_k &= {}^h\sR^i_k(C,M),\\
\pp &=\pp(H^1(C)),\\
b_i& =\dim_\cc H^i(M).
\end{align*}
Let $R$ be the homogeneous coordinate ring of the projectivization $\pp Q$ of the quadratic cone $Q$ in $\pp$. Consider the universal complex from Definition \ref{dglaresonance} on $\pp Q$
\begin{equation}\label{eqSC}
H^0(M)\otimes_\cc\sO_{\pp Q}\mathop{\lra}^{\zeta_Q}
\ldots\lra H^k(M)\otimes_{\cc}\sO_{\pp Q}(k) \mathop{\lra}^{\zeta_{Q}}\ldots
\end{equation}
and the associated complex of graded $R$-modules
\begin{equation}\label{eqGC}
H^0(M)\otimes_\cc R\mathop{\lra}^{\zeta_Q}
\ldots\lra H^k(M)\otimes_{\cc}R(k) \mathop{\lra}^{\zeta_{Q}}\ldots
\end{equation}
By definition, multiplication with $\zeta_Q$ are graded maps of degree one, hence the shifts. The cohomology jump ideals $J^i_k$ of these complexes define $\bP\sR^i_k$ inside $\bP Q$.  Let  
$$a=a(C,M)\stackrel{\textrm{def}}{=}\min \{i\mid H^i(H^\ubul(M)\otimes_{\cc}R(_{^\ubul}),\zeta_Q)\ne 0\}$$
measure how far to the right the complex (\ref{eqGC}) is exact. Therefore the complexes
\begin{equation}\label{eqSC2}
H^0(M)\otimes_\cc\sO_{\pp Q}(-a)\mathop{\lra}
\ldots\lra H^a(M)\otimes_{\cc}\sO_{\pp Q} 
\end{equation}
and 
\begin{equation}\label{eqGC2}
H^0(M)\otimes_\cc R(-a)\mathop{\lra}
\ldots\lra H^a(M)\otimes_{\cc}R 
\end{equation}
are exact except in degree $a$, and the complex (\ref{eqGC2}) is a minimal graded free resolution of the cokernel of the last map. We will call $\phi_i$ the maps in these complexes from the degree $i$ term to the degree $i+1$ term.

There are two sources of restrictions on the possible Betti numbers $b_i$ and on the geometry of the resonance varieties $\pp\cR^i_k$ for $i<a$: one from the Chern classes of the vector bundles in (\ref{eqSC2}), and another one from the relation of $\bP\cR^i_k$ with Fitting ideals of the maps in (\ref{eqGC2}). The Chern classes technique was first applied by Lazarsfeld-Popa \cite{LP} to, in our language, the DGLA pair controlling the infinitesimal deformations of $\cO_X$ in $\Pic^\tau(X)=\cM(X,1)$ with cohomology constraints when $X$ is a compact K\"ahler manifold, see Section \ref{holomorphic}. Fitting ideals were also used by Fulton-Lazarsfeld to prove connectedness results for Brill-Noether loci, which are particular cases of cohomology jump loci. For applications of Fitting ideals to twisted higher-rank Brill-Noether loci, see the survey \cite{TiB}. It was noticed in \cite{B-h} that the case of the trivial local system of rank one on the complement of a hyperplane arrangement is similar, where the Chern classes approach and the relation with Fitting ideals were also explored. This similarity is explained and generalized in this section by observing that we can run the arguments for any formal DGLA pair.

The following results were stated in \cite{B-h} for hyperplane arrangement complements. However, in that case $\bP Q=\bP$, which is not true in general. 

\begin{prop}\label{propIneqs}
With notation as above for a formal DGLA pair $(C,M)$, let $i<a$. 

(a) Let $\beta_i=\rank (\phi_i)$ and let $I_{\beta_i}(\phi_i)$ be the ideal in $R$ generated by the minors of rank $\beta_i$ of $\phi_i$. Then $b_i=\beta_i+\beta_{i-1}$ and ${\rm{depth}}(I_{\beta_i}(\phi_i))\ge a-i$.

(b) $(\bP\cR^i_1)^{red}$ is the  support of $I_{\beta_i}(\phi_i)$.

(c) $(\bP\cR^{i-1}_1)^{red}\subset (\bP\cR^i_1)^{red}$.

(d) $\codim_{\bP Q} \bP\cR^i_1 \ge a-i$ if $R$ is Cohen-Macaulay.

(e) $\codim_{\bP Q} \bP\cR^i_1\le (\beta_{i-1}+1)(\beta_{i+1}+1)$ if $R$ is Cohen-Macaulay.

(f) $\cR^0_k$ is defined by $I_{\beta_0+1-k}(\phi_i)$.

(g) $(\bP\cR^i_k)^{red}$ contains the  support of $I_{\beta_i+1-k}(\phi_i)$, and equals it away from $(\bP\cR^i_1)$. 

(h) $\codim_{\bP Q} \bP\cR^i_k\le (\beta_{i-1}+k)(\beta_{i+1}+k)$ if $R$ is Cohen-Macaulay.

(i) $\bP\cR^i_k$ is connected away from the components of $\bP\cR^i_1$ which are disconnected from the support of $I_{\beta_i+1-k}(\phi_i)$, if $\bP Q$ is irreducible and reduced. 

(j) $(\cR^{i}_1)^{red}\subset (\cR^{i+1}_2)^{red}$. If $i<a-1$ and $k\le 1+\frac{a-2}{i+1}$, then $(\cR^i_1)^{red}\subset (\cR^{i+1}_j)^{red}$.

(k) $b_i\ge \binom{a}{i}$ if $R$ is a polynomial ring.   $\beta_i\ge a-i$ if $R$ is Cohen-Macaulay.

Let $q_i=\codim_{\bP Q}\bP \cR^i_1$, and for $i>0$ let $$c_t^{(i)}=\prod_{k=1}^{i+1}(1-k\cdot t)^{(-1)^kb_i+1-k}.$$ Let $c_j^{(i)}$ be the coefficient of $t^j$ in $c_t^{(i)}$. Assume that $\chi_a(M):=b_a-b_{a-1}+b_{a-2}-\ldots\ne 0$. 
 
(l)  Any Schur polynomial of weight $< q_i$ in $c_1^{(i)},\ldots, c_{q_i-1}^{(i)}$ is non-negative.
\end{prop}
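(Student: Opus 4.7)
The plan is to observe that, by the very choice of $a=a(C,M)$, the truncated complex (\ref{eqGC2}) is the minimal graded free resolution over $R$ of its cokernel $N:=\mathrm{coker}(\phi_{a-1})$: exactness in cohomological degrees $<a$ holds by definition of $a$, while minimality follows because $\zeta_Q$ has strictly positive degree and hence all differentials land in the irrelevant ideal. The sheafified version (\ref{eqSC2}) is then a locally free resolution of the coherent sheaf $\widetilde N$ on $\pp Q$. Once these two identifications are in place, each of (a)--(l) reduces to a standard piece of commutative algebra or intersection theory applied to this resolution.

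First I would settle the algebraic items. In (a), $b_i=\beta_i+\beta_{i-1}$ is rank counting for the short exact pieces of (\ref{eqGC2}), and ${\rm depth}\,I_{\beta_i}(\phi_i)\ge a-i$ is the Buchsbaum--Eisenbud acyclicity criterion. Part (d) follows from the Cohen--Macaulay inequality $\codim\le{\rm depth}$, and (k) combines these with the standard syzygy estimate $b_i\ge\binom{a}{i}$ available over a polynomial ring. Items (b), (f), (g), (h) are the usual Fitting-ideal descriptions of the rank-drop loci of $\phi_i$, together with the Macaulay codimension bound on determinantal ideals. For (c) and (j) I would use that the fibrewise Euler characteristic of the exact complex (\ref{eqSC2}) is constant, so a fibre-cohomology jump at degree $i-1<a$ forces a jump at degree $i$. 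Finally (i) is an application of the Fulton--Lazarsfeld connectedness theorem for degeneracy loci of morphisms of vector bundles on an irreducible projective variety, used on $\pp Q$.

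The main obstacle is (l), which I expect to require the most care; the strategy is that of Lazarsfeld--Popa \cite{LP}. From (\ref{eqSC2}) one reads, in the Grothendieck group of $\pp Q$,
\[
[\widetilde N]=\sum_{k=0}^{a}(-1)^{a-k}b_k\,[\sO_{\pp Q}(-k)],
\]
so the total Chern class of $\widetilde N$, computed through this locally free resolution, is exactly the one encoded by $c_t^{(i)}$ after truncation in the relevant range. The hypothesis $\chi_a(M)\ne 0$ guarantees that $\widetilde N$ has positive generic rank, and $\widetilde N$ is locally free precisely away from $\pp\sR^i_1$, i.e.\ in codimension $q_i$. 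Twisting $\widetilde N$ to render it nef and applying the Fulton--Lazarsfeld Schur positivity theorem for nef vector bundles then yields nonnegativity of every Schur polynomial of weight $<q_i$ in the classes $c_j^{(i)}$.

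The hardest technical point will be to control the possible singularities and reducibility of $\pp Q$ and the fact that $\widetilde N$ is only locally free off a codimension-$q_i$ locus. As in \cite{LP} and \cite{B-h}, I plan to either pull everything back to a resolution of singularities of $\pp Q$ or restrict to a sufficiently general linear section of dimension less than $q_i$, on which $\widetilde N$ is genuinely a nef vector bundle; the delicate bookkeeping will be to verify that these reductions preserve both the Chern class computation above and the codimension range $<q_i$ in which the positivity is asserted.
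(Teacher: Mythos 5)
Your overall route is the paper's own: you identify (\ref{eqGC2}) as the minimal graded free resolution of $\mathrm{coker}(\phi_{a-1})$ and its sheafification as a locally free resolution on $\pp Q$; you settle (a), (d), (h), (k) with the Buchsbaum--Eisenbud acyclicity criterion, the Cohen--Macaulay depth-codimension equality, Eagon--Northcott, and syzygy-type bounds; and your plan for (l) is exactly the paper's proof: Chern classes read off the resolution, restriction to a general linear section of dimension $q_i-1$ missing the non-locally-free locus, and Fulton--Lazarsfeld positivity for the resulting globally generated (hence nef) bundle. Two imprecisions in (l) are worth flagging, though both are repaired by the truncate-and-shift step the paper performs: for $i<a-1$ the relevant sheaf is the truncated cokernel $\mathrm{coker}(\phi_i)$, not $\widetilde N$ itself (whose non-locally-free locus is $\pp\cR^{a-1}_1$, not $\pp\cR^i_1$); and the twist must be the specific one exhibiting the sheaf as a quotient of $H^{i+1}(M)\otimes_{\cc}\sO$, since an arbitrary nef-making twist would change the Chern classes away from the classes $c_j^{(i)}$ appearing in the statement.

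The genuine gap is your treatment of (c) and (j). Constancy of the fibrewise Euler characteristic does not force a jump at degree $i-1$ to propagate to degree $i$: the compensating jump need only occur in some degree of the opposite parity, not the adjacent one. Writing $r_j$ for the rank of $\phi_j$ at the given point, one has $\dim H^j(\text{fibre}) = b_j - r_j - r_{j-1}$, so a jump at $i-1$ means $r_{i-1}<\beta_{i-1}$ or $r_{i-2}<\beta_{i-2}$, and concluding a jump at $i$ requires the nestedness of the degeneracy loci of an acyclic complex of free modules, i.e.\ \cite[Corollary 20.12]{eisenbud}, which is proved by a localization-and-splitting argument, not by Euler characteristics; this is precisely what the paper invokes, after establishing (b) via the Tor identification of Lemma \ref{lemEPY}. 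Worse, (j) is an \emph{amplification} statement: a jump of $1$ at degree $i$ must force a jump of $2$ at degree $i+1$, and a jump of $k$ under the hypothesis $k\le 1+\frac{a-2}{i+1}$; no Euler-characteristic bookkeeping can convert a jump of size $1$ into a jump of size $2$. The first containment in (j) does follow from nestedness combined with the rank count above (a jump at $i$ forces $\beta_i-r_i\ge 1$, hence $\beta_{i+1}-r_{i+1}\ge 1$, hence a jump of at least $2$ at $i+1$), but the refined containment is \cite[Corollary 1.3]{B-h}, which rests on the Fitting-ideal identification in (b)/(g) together with codimension bounds for determinantal ideals --- an input your proposal does not supply.
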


\begin{proof}
(a) This is \cite[Theorem 20.9]{eisenbud}.

(b) The proof is essentially the same as for \cite[Proposition 3.4]{B-h}. By truncating (\ref{eqGC2}) and repeating the following argument, it is enough to show only the case $i=a-1$. Using the complex of sheaves (\ref{eqSC2}), let $\sF={\rm{coker}}(\phi_{a-1})$. The support of the Fitting ideal $I_{\beta_{a-1}}(\phi_{a-1})$ is the locus of closed points in $\bP Q$ where $\sF$ fails to be locally free. The claim follows now from Lemma \ref{lemRR}, Lemma \ref{lemEPY}, and the fact that $\Tor_1^{\cO_{\bP Q,\bar{\eta}}}(\kappa(\bar{\eta}),\cF_{\bar{\eta}})=0$ iff the stalk $\cF_{\bar{\eta}}$ is free.

(c) Follows from (b) and \cite[Corollary 20.12]{eisenbud}.

(d) Follows from (a) and (b), since the Cohen-Macaulay condition implies that depth equals codimension.

(e) See \cite[Theorem 1.2]{B-h}. The result of Eagon-Northcott used there holds if $R$ is Cohen-Macaulay.

(f) It follows by definition.

(g) This is essentially the proof of Theorem 1.1 from \cite{B-h} and its Erratum. Again, it is enough to prove the case $i=a-1$. By Lemma \ref{lemRR} and Lemma \ref{lemEPY},
$$
(\bP\cR^{a-1}_k)^{red}=\{\bar{\eta}\in \bP Q\mid \Tor_1^{\cO_{\bP Q,\bar{\eta}}}(\kappa(\bar{\eta}),\cF_{\bar{\eta}})\ge k\}.
$$
The support of the Fitting ideal $I_{\beta_{a-1}+1-k}(\phi_{a-1})$ is
$$
\{\bar{\eta}\in\bP Q\mid m(\cF_{\bar{\eta}})-\rank(\cF_{\bar{\eta}})\ge k\},
$$
where $m(\cF_{\bar{\eta}})$ is the minimal number of generators of $\cF_{\bar{\eta}}$ over $\cO_{\bP Q,\bar{\eta}}$. The rank is well-defined since minimal free resolutions exist over local rings, and by the characterization of exactness of a complex from \cite[Theorem 20.9]{eisenbud}. Thus we do not need to assume that $\cO_{\bP Q,\bar{\eta}}$ is a domain as in {\it loc. cit.}. The rest of the argument is as in {\it loc. cit.}

(h) Follows from (f) as in the proof of (e).

(i) Follows from (f) and from the Fulton-Lazarsfeld connectedness theorem, see Erratum, Corollary 1.2 of \cite{B-h}.

(j) See \cite[Corollary 1.3]{B-h}.

(k) See \cite[Proposition 3.2]{B-h}. The result of Herzog-K\"uhl used holds for the case when $R$ is a polynomial ring. The result of Evans-Griffiths used holds for the case when $R$ is Cohen-Macaulay.

(l) This is essentially the same proof as for \cite[Theorem 3.1]{B-h}. Consider the case $i=a-1$ firstly. By (b), $\bP\cR^{a-1}_1$ is the locus of points in $\bP Q$ where $\cF$ fails to be locally free. Let $W$ be a generic vector subspace of $H^1(C)$ of codimension $\dim \bP\cR^{a-1}_1$+1. Then the restriction of (\ref{eqSC2}) to $X=\bP Q\cap\bP W$ gives an exact sequence of locally free sheaves on $X$:
\begin{equation}\label{eqRes}
0\ra H^0M\otimes\cO_{X}(-a)\ra\ldots\ra H^aM\otimes\cO_{X}\ra \cF_{| X}\ra 0.
\end{equation}
Since { we assume} $\chi_a(M)\ne 0$, the restriction of $\cF$ to $X$ is non-zero. Moreover, this is a globally generated vector bundle, so Fulton-Lazarsfeld positivity applies, see \cite[12.1.7 (a)]{Fu}: for any positive $k$-cycle $\alpha$ on $X$, the intersection $P_j\cap \alpha$ is the rational equivalence class of a non-negatively supported $k-j$ cycle on $X$, where $P_j$ is any Schur polynomial of weight $j\le k$ in the Chern classes of $\cF_{|X}$.

Let $i:X\ra \bP$ be the natural inclusion. For a vector bundle $E$ on $\bP$ and for $\alpha\in A_*(X)$, the projection formula says that $i_*(c_j(i^* E)\cap\alpha)=c_j(E)\cap i_*\alpha$. From (\ref{eqRes}) it is not difficult to see that the same holds for $\cF_{|X}$, namely 
\begin{equation}\label{eqInt}
i_*(c_j(\cF_{|X})\cap\alpha) =C_j\left\{\prod_{k=1}^a(1+c_1(\cO_{\bP}(-k))\cdot t)^{(-1)^k b_{a-k}}\right\}\cap i_*\alpha,
\end{equation}
where $C_j$ stands for the coefficient of $t^j$. Let $\alpha=[X]\cap [L]\in A_k(X)$ with $[L]\in A_*(\bP)$ be the class of a linear section. Then the degree of $i_*\alpha $ is $\deg (\bP Q)$. Thus the non-negativity result of Fulton-Lazarsfeld implies that 
$$
C_j\left\{\prod_{k=1}^a(1-k\cdot t)^{(-1)^k b_{a-k}}\right\}\cdot \deg(\bP Q)\ge 0
$$
for $j\le k$, and so for $j\le \dim X=q_{a-1}-1$. Thus the claim follows in this case for $P_j=c_j(\cF_{|X})$. For the other Schur polynomials, a repeated application of (\ref{eqInt}) reduces the claim to this case. 

For the case $i<a-1$, note that $\chi_{i}(M)\ne 0$ for $i<a$ by (c) and the assumption that $\chi_a(M)\ne 0$. Hence this case follows by the same argument by truncating (\ref{eqRes}) and shifting to get global generation. 
\end{proof}

The following, which was used above, was proved for the case $\bP Q=\bP$ in \cite[Theorem 4.1]{EPY} using the BGG correspondence.

\begin{lm}\label{lemEPY} With the notation as above, let $\cF={\rm{coker}}(\phi_{a-1})$ in (\ref{eqSC2}). Let $0\ne\eta\in Q$ and denote its image in $\bP Q$ by $\bar{\eta}$. Then for $i\ge 0$
$$
H^{a-i}(H^\ubul M,\eta .)=\Tor_i^{\cO_{\bP Q,\bar{\eta}}}(\kappa (\bar{\eta}),\cF_{\bar{\eta}}),
$$
where $\kappa(\bar{\eta})$ is the residue field of $\eta$, and $\cF_{\bar{\eta}}$ is the stalk of the sheaf $\cF$ at $\bar{\eta}$.
\end{lm}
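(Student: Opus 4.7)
The key point is that by the definition of $a$, the complex (\ref{eqSC2}) is a locally free resolution of $\cF$ on $\bP Q$ of length $a$, placed in cohomological degrees $0,1,\ldots,a$ with $\cF$ sitting in degree $a$ as the cokernel of the last map. I would exploit this directly to compute $\Tor$.

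First, I would localize (\ref{eqSC2}) at the closed point $\bar\eta\in\bP Q$ to obtain a finite free resolution of $\cF_{\bar\eta}$ over $\cO_{\bP Q,\bar\eta}$, where the term in position $j$ is a free module of rank $\dim_\bC H^j(M)$. Consequently, $\Tor_i^{\cO_{\bP Q,\bar\eta}}(\kappa(\bar\eta),\cF_{\bar\eta})$ is computed as the homology in position $a-i$ of the complex obtained by tensoring this resolution with $\kappa(\bar\eta)=\bC$.

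Second, I would identify this tensored complex with the Aomoto complex $(H^\ubul(M),\eta\wedge\cdot)$. In a neighborhood of $\bar\eta$, choose a trivializing section of $\sO_{\bP Q}(1)$; then $\zeta_Q$ is represented by a local section $f$ of $H^1(C)\otimes\sO_{\bP Q}$ whose value at $\bar\eta$ is $\eta$ up to an invertible scalar coming from the trivialization. Reducing the localized complex modulo the maximal ideal $\mathfrak{m}_{\bar\eta}\subset\cO_{\bP Q,\bar\eta}$, the twists $\sO_{\bP Q}(-a+j)$ become copies of $\kappa(\bar\eta)$ and the differential $H^j(M)\otimes\kappa(\bar\eta)\to H^{j+1}(M)\otimes\kappa(\bar\eta)$ becomes $m\mapsto \eta\wedge m$. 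The (non-zero) scalar coming from the trivialization merely rescales each component and hence gives an isomorphism of complexes, so its cohomology in position $a-i$ is $H^{a-i}(H^\ubul(M),\eta)$.

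Combining the two steps yields the asserted identification $\Tor_i^{\cO_{\bP Q,\bar\eta}}(\kappa(\bar\eta),\cF_{\bar\eta})\cong H^{a-i}(H^\ubul(M),\eta)$. The only real bookkeeping is keeping track of the twists and the trivialization in the identification of the differential; this is the sole potential source of confusion, but the scalar ambiguity from the trivialization is invertible and does not affect cohomology. No BGG machinery is needed here, in contrast with \cite{EPY}, since once (\ref{eqSC2}) is recognized as a resolution of $\cF$ the computation is immediate.
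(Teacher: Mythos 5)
Your proof is correct, and it reaches the conclusion by a more direct route than the paper, although both arguments rest on the same foundation: the remark preceding the lemma (a consequence of the definition of $a$) that (\ref{eqSC2}) is exact except in degree $a$, hence localizes at $\bar\eta$ to a finite free resolution of $\cF_{\bar\eta}$ over $\cO_{\bP Q,\bar\eta}$ whose reduction modulo $\mathfrak{m}_{\bar\eta}$ is the Aomoto complex $(H^\ubul(M),\eta\wedge\cdot)$ up to an invertible scalar. Where you simply invoke the fact that $\Tor_i^{\cO_{\bP Q,\bar\eta}}(\kappa(\bar\eta),\cF_{\bar\eta})$ is computed by tensoring this free resolution with $\kappa(\bar\eta)$, the paper unrolls that same piece of homological algebra by hand: it inducts on the stupid truncations $\sigma_{\le k}$, using the short exact sequences $0\to\cK_{k-1}\to H^{k+1}(M)\otimes\cO_{\bP Q}(k+1-a)\to\cK_k\to 0$ for $\cK_k={\rm{coker}}(\phi_k)$ and the long exact sequence of Tor to dimension-shift, treating $i\ge 2$, $i=1$ (where one needs the factorization of $H^{a-1}(M)\to H^a(M)$ through $\cK_{a-2,\bar\eta}\otimes\kappa(\bar\eta)$), and $i=0$ separately. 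Your version makes those edge cases automatic, which is what directness buys; the paper's version never has to discuss trivializations of $\cO_{\bP Q}(1)$ or the resulting scalar ambiguity, which you correctly identify as harmless since a uniform nonzero scalar on the differentials is absorbed by a degreewise rescaling isomorphism. One shared caveat, which is a feature of the statement rather than a gap in your argument: at $i=0$ both computations yield $\cF_{\bar\eta}\otimes\kappa(\bar\eta)={\rm{coker}}\bigl(\eta\wedge\colon H^{a-1}(M)\to H^a(M)\bigr)$, so the displayed equality at $i=0$ must be read with $H^{a}(H^\ubul M,\eta\,\cdot\,)$ meaning the cohomology of the complex truncated at degree $a$ as in (\ref{eqSC2}), not of the full Aomoto complex; in the paper the lemma is only applied with $i\ge 1$, where the two readings agree.
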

\begin{proof}
By induction on $k$, we can assume that
$$
H^{k-i}\sigma_{\le k}(H^\ubul M,\eta .)=\Tor_i^{\cO_{\bP Q,\bar{\eta}}}(\kappa(\bar{\eta}), \cK_{k-1,\bar{\eta}})
$$
for $i\ge 0, k<a$, where $\sigma_{\le k}$ is the stupid truncation and $\cK_k={\rm{coker}}(\phi_{k})$ in (\ref{eqSC2}). By applying $.\otimes_{\cO_{\bP Q,\bar{\eta}}}\kappa(\bar{\eta})$ to the exact sequence
$$
0\lra \cK_{a-2} \lra H^a(M)\otimes\cO_{\bP Q}\lra \cK_{a-1}\lra 0,
$$
we obtain that 
$$
\Tor_{i}^{\cO_{\bP Q,\bar{\eta}}}(\kappa(\bar{\eta}),\cK_{a-1,\bar{\eta}})= \Tor_{i-1}^{\cO_{\bP Q,\bar{\eta}}}(\kappa({\bar{\eta}}),\cK_{a-2,\bar{\eta}})=H^{a-i}(H^\ubul M,\eta .)
$$
for $i\ge 2$. Since the case $i=0$ is obvious, it remains to prove the case $i=1$. This case follows since the map $H^{a-1}(M)\ra H^a(M)$ decomposes via the surjection $H^{a-1}(M)\ra \cK_{a-2,\bar{\eta}}\otimes \kappa(\bar{\eta})$, and we have an exact sequence
$$
0\lra \Tor_{1}^{\cO_{\bP Q,\bar{\eta}}}(\kappa(\bar{\eta}),\cK_{a-1,\bar{\eta}})\lra \cK_{a-2,\bar{\eta}}\otimes\kappa(\bar{\eta}) \lra H^a(M)\lra \cK_{a-1}\otimes\kappa(\bar{\eta}) \lra 0. $$
\end{proof}

\begin{rmk} Given a particular deformation problem with cohomology constraints, it is interesting to determine geometrically the number $a=a(C,M)$ for a DGLA pair governing the deformation problem. Let us give some examples.

(a) Consider the formal DGLA pair $(\Ad (\enmo(\cO_X)),\Ap (\cO_X))$ governing the deformation problem with cohomology constraints $(\Pic^\tau(X), \cO_X, \cV^{pq}_k(\cO_X))$ from Remark \ref{rmkGL}. Then Proposition \ref{propIneqs} becomes a result about $\cV^{pq}_k(\cO_X)$ and $h^q(X,\Omega_X^p)$ via Corollary \ref{corSVB}. In this case the deformation problem can be stated on the Albanese of $X$, but one pays the price that one has to know something about the Albanese map. For  $p=0$ or $\dim X$,
$$
a=\dim X-\dim (\text{generic fiber of the Albanese map}),
$$
see \cite{LP} where the statements on the numbers $h^q(\cO_X)$ are also proven. For other values of $p$, it is enough to consider $p+q\le\dim X$ by Hodge symmetry. Then Popa-Schnell \cite{PS} show that 
\begin{equation}\label{eqPS}
a\ge n-p-\delta,
\end{equation}
where $\delta$ is the defect of the semismallness of the Albanese map. This fact is implicit in the proof of their result that $\codim \sV^{pq}_1(\cO_X)\ge |n-p-q|-\delta$, which follows from (\ref{eqPS}) together with part (d) of Proposition \ref{propIneqs} above.

(b) Consider the DGLA pair $(\Ar(\enmo(\bC_X)), \Ar(\bC_X))$ governing the deformation problem with cohomology constraints $
(\bR(X,1), \mathbf{1}, \ti\sV^i_k(\bC_X))
$
from Section \ref{localsystem}. When $X$ is the complement in $\bC^n$ of a central essential indecomposable hyperplane arrangement, the pair is formal because $X$ is formal. Moreoever, in this case, $a=n-1$ by \cite{EPY} and Proposition \ref{propIneqs} is proved in \cite{B-h}.
\end{rmk}

\end{document}